\numberwithin{equation}{section}
\numberwithin{figure}{section}
\newtheorem{thm}{Theorem}[section]
\newtheorem{lem}[thm]{Lemma}
\newtheorem{cor}[thm]{Corollary}
\newtheorem{prop}[thm]{Proposition}
\theoremstyle{remark}
\newtheorem{rem}[thm]{Remark}
\theoremstyle{definition}
\newtheorem{ex}[thm]{Example}
\numberwithin{equation}{section}
\def\P{{\rm P}}
\def\Z{{\mathbb{Z}}}
\def\R{{\mathbb{R}}}
\def\F{{\mathcal{F}}}
\newcommand{\PP}{\mathbb{P}}
\newcommand{\E}{{\rm E}}
\newcommand{\EE}{\mathbb{E}}
\newcommand{\G}{\mathcal{G}}
\newcommand{\X}{\mathcal{X}}
\newcommand{\M}{\mathcal{M}}
\newcommand{\Po}{{\rm P}_{\omega}}
\newcommand{\Eo}{{\rm E}_{\omega}}
\newcommand{\NN}{\mathbb{N}}
\newcommand{\N}{\mathbb{N}}
\newcommand{\T}{\mathbb{T}}
\newcommand{\ZZ}{\mathbb{Z}}
\newcommand{\RR}{\mathbb{R}}
\newcommand{\var}{{\rm Var}}
\newcommand{\ud}{\mathrm{d}}
\newcommand{\eps}{\varepsilon}
\renewcommand{\l}{\lambda}
\DeclareMathOperator{\1}{\mathbbm{1}}
\newcommand{\bj}{\mathbf{j}}
\newcommand{\bL}{\mathbf{L}}
\definecolor{ala}{RGB}{20,100,30}
\definecolor{p}{RGB}{100,1,100}
\begin{document}
	\title[Weak quenched limit theorems for RWSRE]{Weak quenched limit theorems for a random walk in a sparse random environment}
	\author[D. Buraczewski, P. Dyszewski and A. Kołodziejska]{Dariusz Buraczewski, Piotr Dyszewski and Alicja Kołodziejska}
	\date{}

	\begin{abstract}
		We study the quenched behaviour of a perturbed version of the simple symmetric random walk on the set of integers. The random walker moves symmetrically with an exception of some  randomly chosen sites where we
		impose a random drift. We show that if the gaps between the marked sites are i.i.d. and regularly varying with a sufficiently small index, then there is no strong quenched limit laws for the position of the random walker.
		As a consequence we study the quenched limit laws in the context of weak convergence of random measures.
	\end{abstract}

	\keywords{weak convergence, point processes, regular variation, random walk in a random environment, sparse random environment}
	\subjclass[2010]{Primary: 60K37; secondary 60F05; 60G57}

\maketitle

\section{Introduction}\label{sec:intro}

One of the most classical and well-understood random processes is the
simple symmetric random walk (SRW) on the set of integers, where the particle starting
at zero every unit time moves with probability $1/2$ to one of its neighbours.
This process is a time and space homogeneous Markov chain, that is its increments are
independent of the past and the transitions do not depend on time and the current
position of the process.  In many cases, the homogeneity of the environment reduces the applicability
of the process. In numerous applied models  some kind of obstacles can appear like impurities,
fluctuations, etc. Thus, it is natural to express such irregularities as a random environment
and it is well known that even  small perturbations of the environment
affect properties of the random process. In 1981 Harrison and Shepp \cite{Harrison:Shepp:1981}
described the behaviour of the SRW in a slightly disturbed environment, replacing only the probability of passing from 0 to 1 by some fixed $p\in (0,1)$.  They observed that the scaling limit is not the Brownian motion, but the skew Brownian motion.

We intend to study random walks in a randomly perturbed environment.  Our main results concern
the so-called random walk in a sparse random environment (RWSRE) introduced in \cite{matzavinos:2016:random}, in which homogeneity of an environment is
perturbed only on a sparse subset of $\Z$. More precisely, first we choose randomly
a subset of integers  marked by the positions of a standard random walk
with positive integer jumps and next we impose a random drift at the chosen sites.
The present paper can be viewed as a continuation of the recent publications \cite{matzavinos:2016:random,buraczewski:2019:random,buraczewski:2020:random},
where annealed limit theorems were described.
These annelad-type results do not settle however the question if the environment alone is sufficient
to determine the distributional behaviour of the process with high certainty.
Here we froze the environment and we are interested
in limit behaviour of the random process in the quenched settings.
As we show in the present article, even in a very diluted random environment the fluctuations of the
random perturbation of the medium affect the conditional distribution of the random walker.

The model RWSRE we consider here can be viewed as an interpolation between SRW and the one
suggested in the seventies by Solomon~\cite{solomon1975random} called  a one dimensional random walk in random environment (RWRE),
where all the sites were associated with random i.i.d. weights $\{\omega_i\}$ describing the probability
of passing to the right neighbour.
It quickly became clear that the additional environmental noise in the system has a
 significant impact  on the behaviour of the model. In fact, the answers to a variety of questions about the model like limit theorem~\cite{kesten1975limit} and large
	deviations~\cite{dembo1996tail, buraczewski2018precise} 
	are given only in terms of the environment marginalizing the
	impact of the random motion of the process.


\subsection{General setting}

  	To define our model let $\Omega=(0,1)^\Z$ be the set of all possible configurations of the~environment equipped with the corresponding cylindrical $\sigma$-algebra $\F$ and a probability
  	measure $\P$. A random element $\omega=(\omega_n)_{n\in\Z}$ of $(\Omega, \F)$ distributed according  to $\P$ is called a {\it random environment}.
  	Each element $\omega$ of $\Omega$ and integer $x$ gives a rise to a~probability measure $\P_\omega^x$ on the set $\mathcal{X} = \Z^{\N_0}$ with the cylindrical  $\sigma$-algebra
  	$\mathcal{G}$ such that $\Po^x[X_0=x]=1$ and
	\begin{equation*}
		\P_\omega^x  \left[X_{n+1}=j| X_n = i \right] = \left\{
		\begin{array}{cl}
			\omega_i, & \mbox{if } j=i+1,\\
			1- \omega_i,\ & \mbox{if } j=i-1,\\
			0, & \mbox{otherwise,}
		\end{array}\right.
	\end{equation*}
	where $ X = (X_n)_{n \in \N_0} \in \mathcal{X}$.  One sees that under $\P_\omega^x$, $X$ forms a nearest neighbour random walk which
is a time-homogeneous Markov chain on $\Z$ and it is called a \textit{random walk in random environment.}
 The randomness of the environment $\omega$ influences significantly various properties of $X$.
  	In view of this, it is natural to investigate the behaviour of $X$ under the annealed measure $\PP^x= \int \P_\omega^x \P(\ud\omega)$ which is defined as the unique
  	probability measure on $(\Omega\times \X, \F\otimes \G)$ satisfying
	\begin{equation*}
		\PP^x[F\times G] = \int_F \P_{\omega}^x[G] \: \P(\ud \omega),\quad F\in \F,\quad G\in \G.\
	\end{equation*}
	In the sequel we will write $\Po=\Po^0$ and $\PP=\PP^0$.
  	It turns out that, in general, under the annealed probability $X$ is no longer  a Markov chain, because
  it usually exhibits a long range dependence.

  \medskip

  	We are interested in limit theorems for $X_n$ as $\to \infty$,  however in this paper
we discuss the
  	asymptotic behaviour of the corresponding sequence of first passage times $T = (T_n)_{n\in \NN}$, that is 
  	\begin{equation}\label{eq:1:FirstPassage}
    		T_n = \inf \{ k \in \NN \: : \: X_k = n\}.
  	\end{equation}
	We will study the distribution of $T_n$ in the quenched setting which means
	that we will investigate the behaviour of
	\begin{equation*}
		\mu_{n, \omega}(\cdot) =  \P_\omega \left[ (T_n-b_n)/a_n \in \cdot \: \right]
	\end{equation*}	
	for suitable choices of sequences $(a_n)_{n \in \NN}$ and $(b_n)_{n \in \NN}$ possibly depending on $\omega$. In the present setting
 $\mu_{n}$, defined by $\mu_n(\omega) = \mu_{n,\omega}$,  becomes a random element of
	$\mathcal{M}_1$, the space of probability measures on $(\RR, \mathcal{B}or(\RR))$, where
	$\mathcal{B}or(\RR)$ stands for the Borel $\sigma$-algebra. $\mathcal{M}_1$  equipped in the Prokhorov distance is a complete, separable metric space. One can distinguish two
	types of limiting behaviour of $(\mu_n)_{n \in \NN}$. We will say that a strong quenched limit theorem for $T$ holds if $\mu_n \to \mu$ almost surely in  $\mathcal{M}_1$, that is for $\P$ a.s. $\omega$ the sequence of measures $\{\mu_{n,\omega}\}$ converges weakly to $\mu$,
  and say that a
	weak quenched limit law for $T$ holds if $\mu_n \Rightarrow \mu$ in $\mathcal{M}_1$.
	Here and in the sequel $\Rightarrow$ denotes weak convergence. \medskip
	
	We will now discuss different choices of the probability $\P$ which is the distribution of the environment. To keep the introduction brief we will limit the discussion to the i.i.d. random environment, which is the most classical choice for
	$\P$, and the sparse random environment which we will study in depth in the sequel.

\subsection{Independent identically distributed environment}
	One of the simplest and most studied choices of the environmental distribution $\P$  is {\it random walk in i.i.d.~random environment,}  corresponding to a product measure, under which $\omega = (\omega_n)_{n\in\Z}$
	forms a collection of independent, identically distributed (i.i.d.) random variables. In their seminal work Kesten et al.~\cite{kesten1975limit} used the following link between walks and random
	trees~\cite{harris1952first}: the one-dimensional distributions of $T$ are connected to a branching process in random environment with immigration and a reproduction
	law with the mean distributed as $(1-\omega_0)/\omega_0$. This observation later leads to a conclusion that $T$ lies in the domain of attraction of an $\alpha$-stable distribution, where
	$\E[ \omega_0^{-\alpha}(1-\omega_0)^\alpha]=1$, provided that such  $\alpha\in(0,2)$  exists  (see Figure \ref{fig:one}).
	\begin{figure}\label{fig:one}
		\includegraphics[scale=0.4]{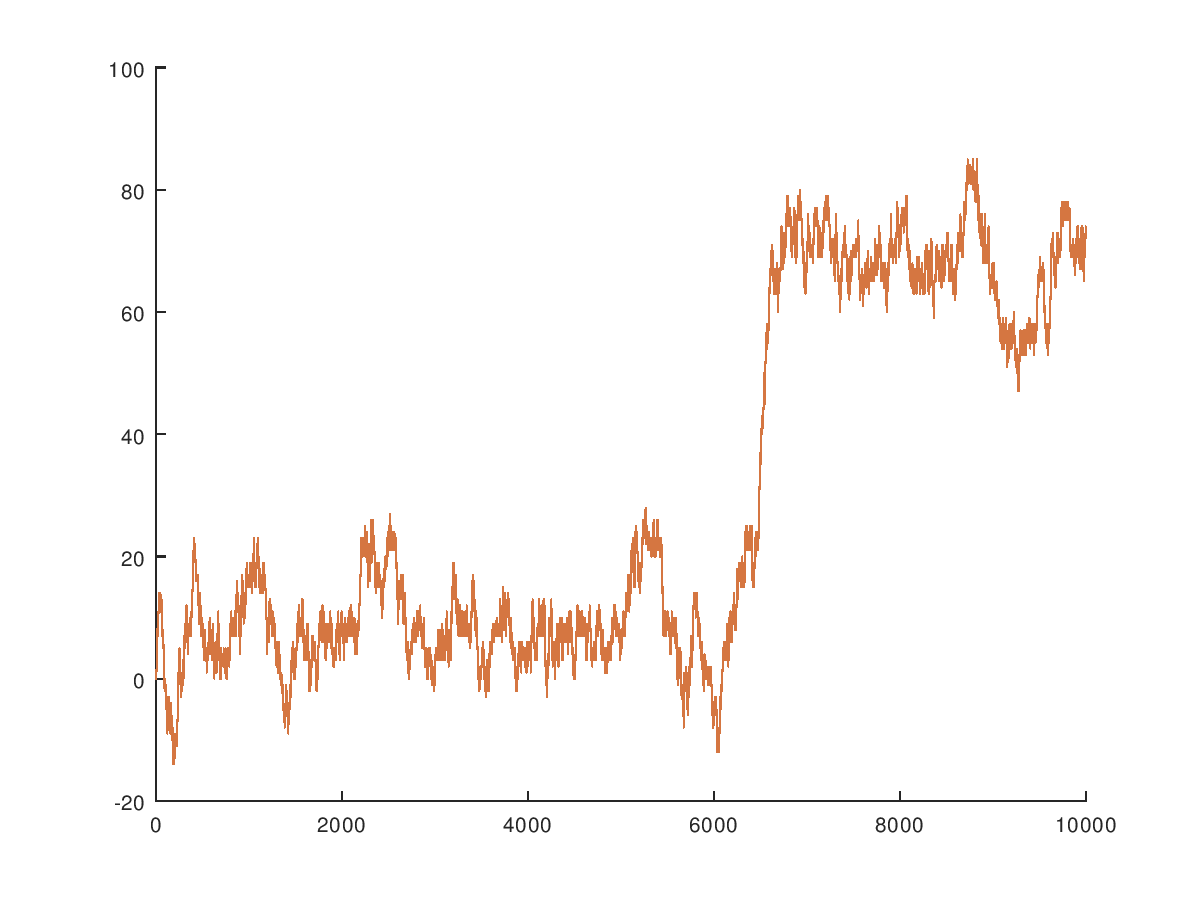}
		\centering
		\caption{Random walk in i.i.d.~random environment for $\omega_0 = 1/3$ with probability $1/3$, $\omega_0=3/4$ with probability $2/3$  and $\alpha \approx 1,35$   .}
	\end{figure}
  After a close examination of the main results of Kesten et al.~\cite{kesten1975limit} it transpires
	that the centering and scaling are determined by the distribution of $(1-\omega_0)/\omega_0$, which means that the behaviour of the walker does not affect the limiting behaviour in a
	significant way. In turn, to understand the random motion, one is led to investigate the behaviour of $T$ under $\P_\omega$.  If $\alpha>2$, then a strong quenched limit  theorem~\cite{alili1999asymptotic,goldsheid2007simple}  of the form
	\begin{equation*}
		\lim_{n \to \infty}\P_\omega \left[ (T_n-\E_\omega[T_n])/(\sigma \sqrt{n}) \in \ud x \: \right] = e^{-x^2/2} \ud x/\sqrt{2\pi}
	\end{equation*}
	holds almost surely in $\mathcal{M}_1$, where $\sigma^2 = \E [ \var_\omega [T_1]] <\infty$. As seen from the results
	in~\cite{peterson2009quenched, peterson2009quenched2} there is no strong quenched limit theorem for $T$ in the case $\alpha<2$. Indeed it turns out that for $\alpha<2$ one can find
	different strong quenched limits for $T$ along different sequences. This in turn leads to the analysis of $T$ in the weak quenched setting, that is weak limits of $\mu_{n}$. Consider first the mapping
	$ H: \M_p \to \M_1$ given as follows: for a point process $\zeta = \sum_{i\ge 1}\delta_{x_i}$, where $\{x_i\}_{i \in \NN}$ is an arbitrary enumeration of the points, define
  	\begin{equation*}
    		H(\zeta)(\cdot) = \left\{
      		\begin{array}{cc}
        		\PP\big[ \sum_{i\ge 1} x_i(\tau_i-1)\in \cdot \big], \ \  & \sum_{i\ge 1} x_i^2<\infty, \\
        		\delta_0(\cdot), & \mbox{otherwise,}
      		\end{array}
    		\right.
  	\end{equation*}
	where $\{\tau_i\}_{i \in \NN}$ is a sequence of i.i.d. mean one exponential random variables. Then the main result  of
\cite{dolgopyat:goldsheid, enriquez:i:spolka,peterson:2013:weak}
states that for $\alpha<2$,
	\begin{equation*}
		\P_\omega \big[ n^{-1/\alpha}(T_n - \E_\omega T_n) \in \cdot \: \big] \Rightarrow  H(N)
	\end{equation*}
	in $\M_1$, where $N$ is a Poisson point process on $(0, \infty)$ with intensity $c_N x^{-\alpha -1} \ud x$ for some constant $c_N>0$.

\subsection{Sparse random environment}

  	We now specify the object of interest in the present paper. We will work under a choice of environmental probability $\P$ for which the random walk $X$ will move
  	symmetrically except some randomly marked points where we impose a random drift. The marked sites will be distributed according to a two-sided random walk.
	Denote by $((\xi_k,\lambda_k))_{k\in \Z}$ a sequence of independent copies of a
  	random vector $(\xi,\lambda)$, where $\lambda \in (0,1)$ and $\xi \in \N$, $\P$-a.s. Considering the aforementioned two-sided random walk $S = (S_n)_{n \in \ZZ}$ given via
  	\begin{equation*}
      		S_n = \left\{ \begin{array}{cl} \sum_{k=1}^n \xi_k, & \mbox{if } n>0, \\
          		0, & \mbox{if } n=0, \\
          		-\sum_{k=n+1}^{ 0} \xi_k, & \mbox{if } n<0,
        		\end{array} \right.
  	\end{equation*}
  	we define a random environment $\omega = (\omega_n)_{n \in \Z} \in \Omega$ given by
  	\begin{equation}\label{eq:sparse}
    		\omega_n = \left\{
      			\begin{array}{ll}
        			\l_{k}, \ & \mbox{if $n=S_k$ for some }k\in\Z,\\
        			1/2, \ & \mbox{otherwise.}
      			\end{array}\right.
  	\end{equation}
  	The sequence $S$ determines the marked sites in which the random drifts $2\lambda_{k}-1$ are placed. Since for the unmarked sites $n$
  	(that is, for most of sites) the probabilities of jumping to the right are deterministic and equal to $\omega_n=1/2$, it is natural to call $\omega$ a {\it sparse random environment}.
  	Following~\cite{matzavinos:2016:random} we use the term {\it random walk in sparse random environment} (RWSRE)
  	for $X$ as defined above with $\omega$ being a~sparse random environment.

  	\begin{ex}
    		In the case when $\P[\xi=1]=1$ random walk in sparse random environment is equivalent to a random walk in i.i.d. environment.
  	\end{ex}

  	\begin{ex}
    		Suppose that $\xi$ is independent of $\lambda$ and has a geometric distribution $\P[\xi=k] = a (1-a)^{k-1}$, $k\geq 1$ for some $a \in (0,1)$.
    		Then the sparse random environment given in~\eqref{eq:sparse} is equivalent to an i.i.d. environment with $\omega_0$ distributed as
    		$\P[\omega_0 \in \cdot] = a\P[\lambda \in \cdot] + (1-a) \delta_{1/2}(\cdot)$.
  	\end{ex}

  	Random walk in a sparse random environment was studied in detail in the annealed setting in~\cite{matzavinos:2016:random, buraczewski:2019:random, buraczewski:2020:random}.
	In~\cite{matzavinos:2016:random} the authors address the question of transience and recurrence of RWSRE and prove a strong law of large numbers and some distributional limit theorems for
	$X$. As in the case of i.i.d. random environment, the fraction
	\begin{equation*}
		\rho = \frac{1-\lambda}{\lambda}
	\end{equation*}
	appears naturally in the description of the asymptotic behaviour of the random walk.
	According to~\cite[Theorem 3.1]{matzavinos:2016:random}, $X$ is
	$\P$-a.s.~transient to $+\infty$ if
	\begin{equation}\label{eq:right_transience}
		\E \log \rho \in [-\infty, 0) \quad \text{and}\quad \E\log \xi<\infty.
	\end{equation}
	Note that the first condition in \eqref{eq:right_transience} excludes the degenerate case $\rho = 1$ a.s.\ in which $X$ is a simple random walk.
	Under \eqref{eq:right_transience}, the RWSRE also satisfies a strong law of large numbers, that is,
	\begin{equation}\label{eq:LLN}
		T_n/ n \to  1/v \quad \PP-a.s.
	\end{equation}
	where
	\begin{equation*}
		v = \left\{ \begin{array}{cl}   \frac{(1 - \E \rho) \E \xi}{ (1-\E \rho) \E\xi^2 + 2 \E \rho \xi\E \xi} & \mbox{if  $\E \rho <1$,
			$\E \rho\xi<\infty$ and $\E \xi^2<\infty$}, \\
			0 & \mbox{otherwise,}     \end{array} \right.
	\end{equation*}
	see Theorem 3.3  in \cite{matzavinos:2016:random} and Proposition 2.1 in~\cite{buraczewski:2019:random}. We note right away that conditions  present in~\eqref{eq:right_transience}
	are satisfied under the conditions
	of our main results. Thus, the random walks in a sparse random
	environment that we treat here are transient to the right.\medskip

	The asymptotic behaviour of $T$ is controlled by two ingredients. The first one, similarly as in the case of i.i.d. environment, is $\alpha >0$ such that
	\begin{equation}\label{eq:1:KestenSparse}
		\E \left[ \rho^\alpha \right]=1.
	\end{equation}
	The parameter $\alpha>0$, if it exists, is used to quantify the effect that the random transition probabilities $\lambda_k$'s have on the asymptotic behaviour of the random walker. The second ingredient is the tail behaviour of
	$\xi$, that is the asymptotic of $\P[\xi >t ]$ as
	$t \to \infty$. If $\E[\xi^4] <\infty$, then with respect to the annealed probability $T$ is in the domain
	of attraction of an $\alpha$-stable distribution~\cite[Theorem 2.2]{buraczewski:2019:random}
	with the exact same behaviour as one observes in the case of i.i.d. environment.
 	New phenomena appear if $\xi$ has a regularly varying tail with index $-\beta$ for
	$\beta \in (0,4)$, i.e. as $t \to \infty$,
	\begin{equation*}
		\P[\xi>t] \sim t^{-\beta}\ell(t)
	\end{equation*}
	for some function $\ell \colon \RR \to \RR$ slowly varying at infinity. Here and in the rest of the article we write $f(t) \sim g(t)$ for two functions $f, g \in\RR \to \RR$ whenever $f(t)/ g(t) \to 1$ as $t \to \infty$.
	Recall that a function $\ell$ is slowly varying at infinity if $\ell(c t) \sim \ell(t)$ as $t \to \infty$ for any constant $c>0$.
	It transpires that if the tail of $\xi$ is regularly varying with $\beta \in (0,4)$ with $\E [\xi]<\infty$, then with respect to the annealed probability $T$ lies in the domain of attraction of
	$\gamma$-stable distribution with $\gamma = \min \{ \alpha, \beta/2 \}$, see~\cite{buraczewski:2019:random}.
	\begin{figure}\label{fig:two}
		\includegraphics[scale=0.4]{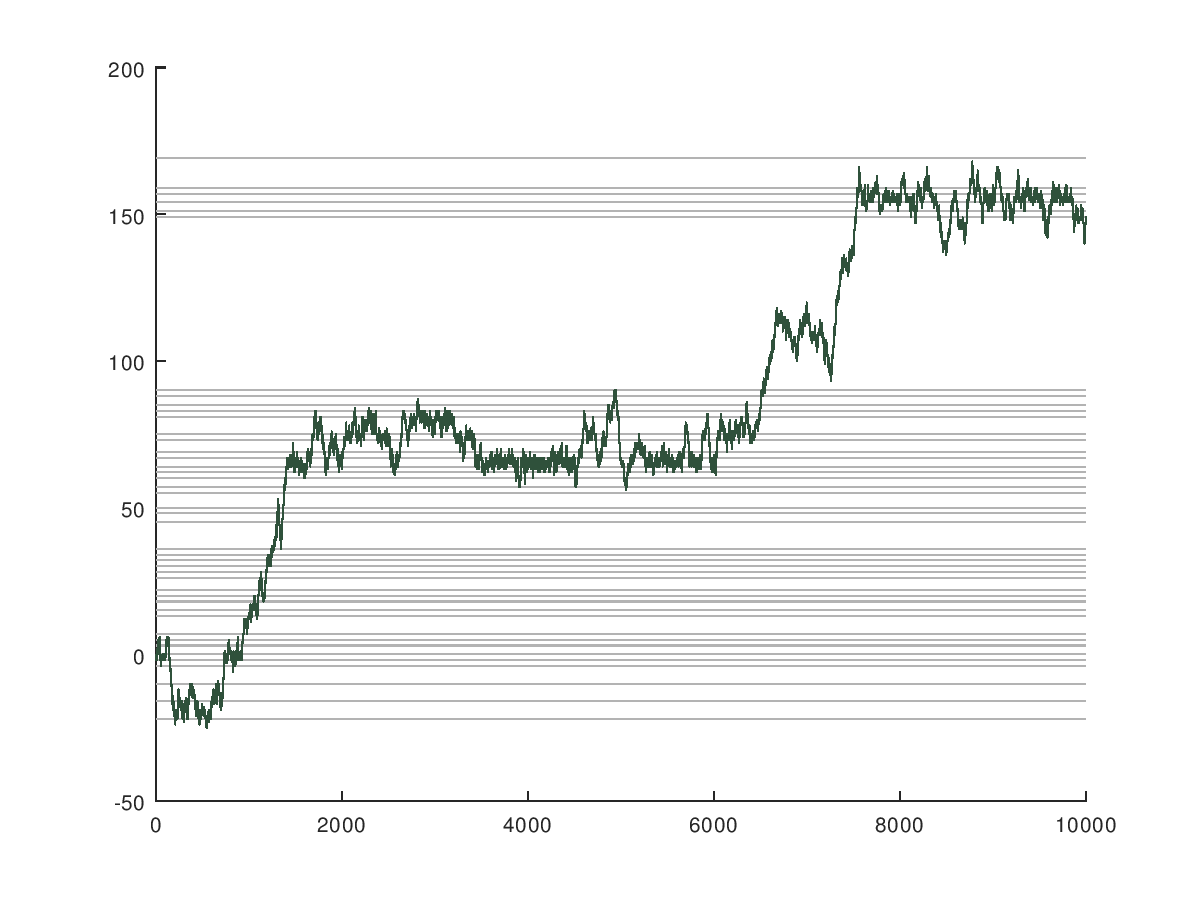}\includegraphics[scale=0.4]{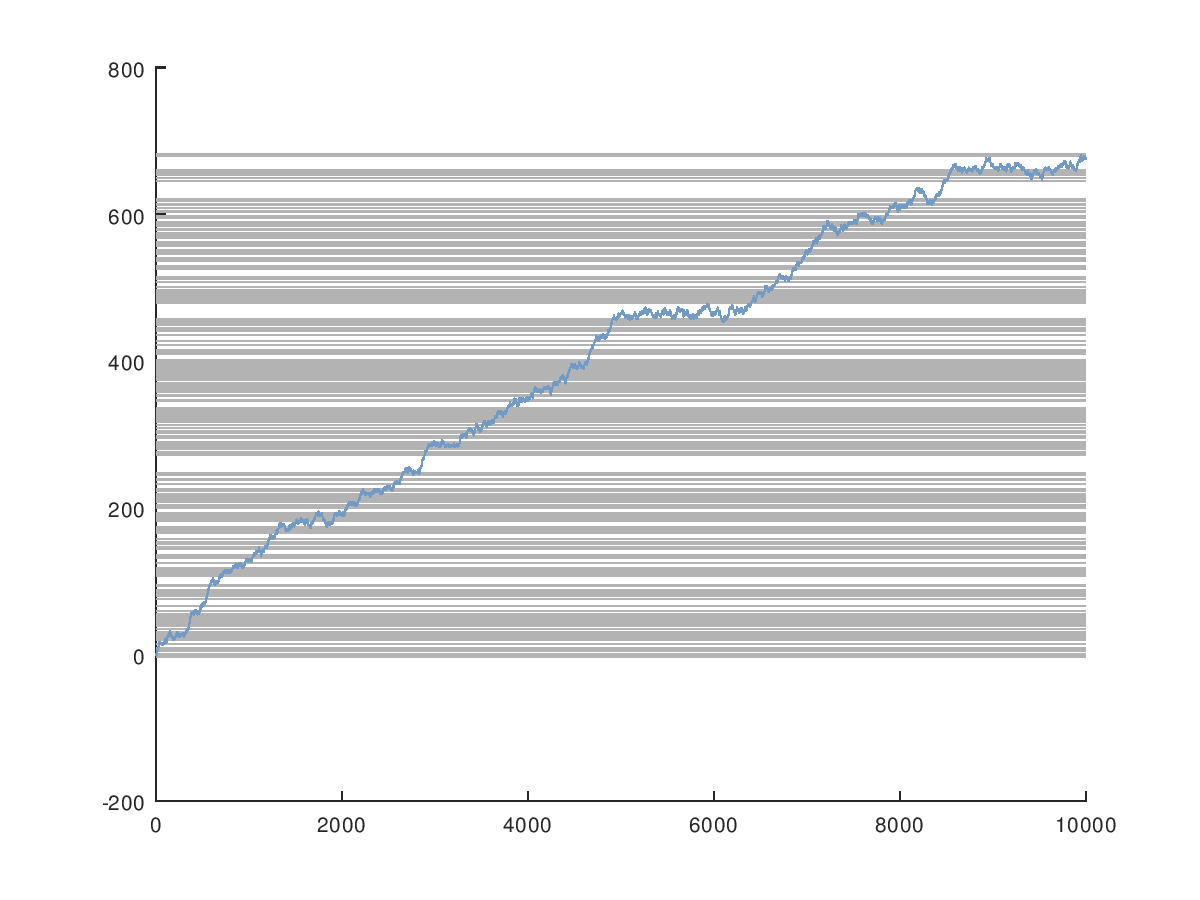}
		\centering
		\caption{RWSRE: $\beta =1.2$ and $\alpha \approx 0.52$ (left) and $\beta =1.2$ and $\alpha \approx 1.85$ (right). The grey horizontal lines indicate the marked sites}
	\end{figure}
	
	For small values of $\beta$ one sees an interplay between the contribution of the sparse random environment and the random movement of the process in the unmarked sites.
	To state this result take $\vartheta$ to be a non-negative random variable with the Laplace transform
	\begin{equation}\label{eq:mm1}
  		\E \left[ e^{-s \vartheta} \right] = \frac{1}{{\rm cosh}(\sqrt{s })}, \quad s>0.
	\end{equation}
	Note that $2\vartheta$ is equal in distribution to the exit time of the one-dimensional Brownian motion from the interval $[-1,1]$, see~\cite[Proposition II.3.7]{revuz:2014:continuous}.
	Next consider a measure $\eta$ on $\mathbb{K}=[0, \infty]^2\setminus\{(0,0)\}$ given via
	\begin{equation*}
		\eta( \{ (v,u) \in \mathbb{K} \: : \: u>x_1 \mbox{ or } v>x_2\}) = x_1^{-\beta} +
			\EE[\vartheta^{\beta/2}] x^{-\beta/2}_2- \EE[\min \{ x_1^{-\beta}, \vartheta^{\beta/2} x_2^{-\beta/2}\}]
	\end{equation*}
	for $x_1, x_2>0$. Now let $N = \sum_{k} \delta_{(t_k, \bj_k )}$ be a Poisson point process on $[0, \infty) \times \mathbb{K}$ with intensity ${\rm LEB} \otimes \eta$, where ${\rm LEB}$
	stands for the one-dimensional Lebesgue measure.
	Under mild integrability assumptions, see~\cite[Lemma 6.4]{buraczewski:2020:random}, the integral
	\begin{equation*}
		\bL(t) = (L_1(t), L_2(t)) = \int_{[0,t] \times \mathbb{K}} \bj \: N(\ud s, \ud \bj), \qquad t \geq 0
	\end{equation*}
	converges and defines a two-dimensional non-stable L\'evy process with L\'evy measure $\eta$. Next consider the $\beta$-inverse subordinator
	\begin{equation*}
		L_1^{\leftarrow}(t) = \inf \{ s>0 \: : \: L_1(s) >t \}, \qquad t \geq 0.
	\end{equation*}
	Finally, if $\beta<2\alpha$ and $\beta \in (0,1)$, then under some additional mild integrability assumptions~\cite[Theorem 21]{buraczewski:2020:random}, with respect to the annealed probability
	\begin{equation*}
		T_n/ n^2 \Rightarrow 2 L_2(L_1^{\leftarrow}(1)^-) + 2\vartheta (1-L_1(L_1^{\leftarrow}(1)^- ))^2
	\end{equation*}
	weakly in $\RR$. The aim of the present article is to present a quenched version of this result.
	\begin{figure}\label{fig:one}
		\includegraphics[scale=0.4]{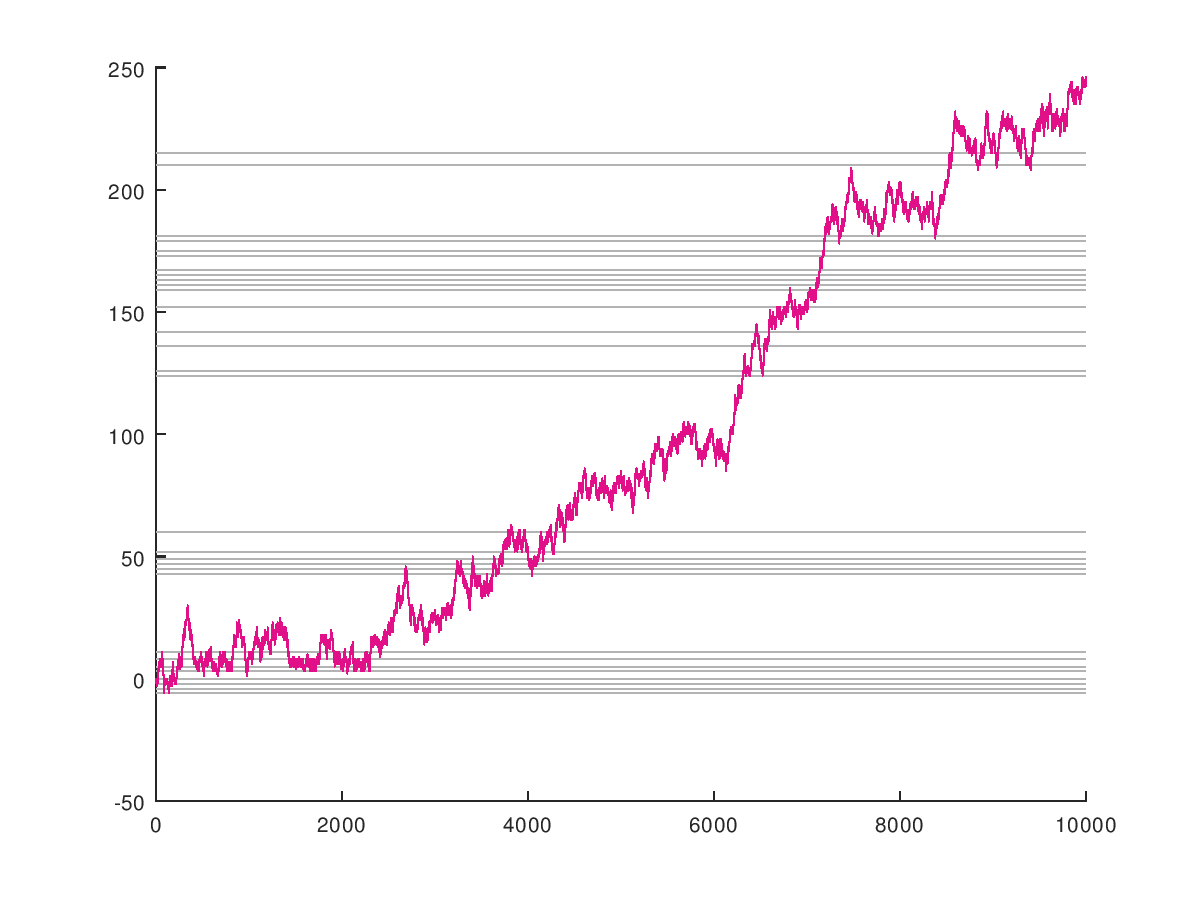}
		\centering
		\caption{RWSRE:  $\beta =0.8$ and $\alpha \approx 1.85$. The grey horizontal lines indicate the marked sites.}
	\end{figure}
	As we will see in our main theorem,  the terms $L_2(L_1^{\leftarrow}(1)-)$ and $L_1(L_1^{\leftarrow}(1-) )$ present on the right hand side can be viewed as the
	contribution of the environment, whereas $\vartheta$
	represents the contribution of the movement of the random walker in the unmarked sites that are close to $n$. For the full treatment of the annealed limit results, in particular the
	complementary case $ \beta\geq 2 \alpha$, we refer the reader to~\cite{buraczewski:2020:random}.\medskip

	The article is organised as follows: in Section~\ref{sec:weak} we give a precise description of our set-up and main results. In Section~\ref{sec:aux} we provide a preliminary analysis of the environment. The essential parts of the
	proof of our main results are in Sections~\ref{sec:absence} and~\ref{sec:proof} where we prove an absence of the strong quenched limits and prove weak quenched limits respectively.

\section{Weak quenched limit laws}\label{sec:weak}

	In this section we will present our main results. From this point we will consider only a sparse random environment given via~\eqref{eq:sparse}.
	We assume that
  	\begin{equation}\label{eq:2:assumption}
    		\P[ \xi >t] \sim t^{-\beta} \ell(t)
  	\end{equation}
	for some $\beta\in (0,4)$ and slowly varying $\ell$. We will focus on the case in which the asymptotic of the system is not determined solely by the environment and thus we will assume also that
  	\begin{equation}\label{eq:2:assumption2}
   		\E [\rho^{2\gamma}]<1, \qquad	 \E[\xi^\gamma \rho^{3\gamma}] < \infty
  	\end{equation}
  	for some parameter $\gamma \in (\beta/4, 1\wedge \beta)$. The first condition in~\eqref{eq:2:assumption2} guarantees that a part of the fluctuations of $T_n$ will come from the time that
	the process spends in the unmarked sites. The second condition is purely technical. Note that we do not assume that there exists $\alpha>0$ for which~\eqref{eq:1:KestenSparse} holds.\medskip

 	Our first result states that there is no quenched limit for $T_n$'s in the strong sense.  Take $(a_n)_{n \in \NN}$ to be any sequence of positive real numbers such that
	$$
		n\P[\xi > a_n] \to 1.
	$$
	Then, since the tail of $\xi$ is assumed to be regularly varying, the sequence $(a_n)_{n\in \NN}$ is also regularly varying with index $1/\beta$. That is for some slowly varying function $\ell_1$,
	$$
		a_n = n^{1/\beta}\ell_1(n).
	$$
	The sequence $(a_n)_{n \in \NN}$ will play the role of the scaling factor in our results. The first one shows an absence of strong quenched limit laws for $T$.

 	\begin{thm}\label{mthm0}
  		Assume~\eqref{eq:right_transience}, \eqref{eq:2:assumption} and \eqref{eq:2:assumption2}.
  Then for $P$ almost every $\omega$ there are no sequences $\{A_n(\omega)\}_{n\in\N}$ and
  $\{C_n(\omega)\}_{n\in\N}$ such that the sequence of normalized random variables $(T_n - C_n(\omega))/A_n(\omega)$ converges
  in distribution (with respect to $P_{\omega}$) to a nontrivial random variable.
  \end{thm}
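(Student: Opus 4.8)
The plan is to fix the environment and exploit the fact that the geometry of the marked sites near $n$ keeps changing with $n$ so violently that no affine normalisation can stabilise the quenched law of $T_n$. Write $S_0=0<S_1<S_2<\dots$ for the marked sites in $[0,\infty)$, put $K_n=\max\{k\ge0:S_k\le n\}$, and for $k\ge1$ let $\Theta_k=T_{S_k}-T_{S_{k-1}}$ be the time the walk needs to cross the $k$-th block after first reaching $S_{k-1}$. Applying the strong Markov property at the stopping times $T_{S_0},T_{S_1},\dots$, the variables $\Theta_1,\Theta_2,\dots$ are independent under $P_\omega$ and $T_n=\sum_{k=1}^{K_n}\Theta_k+\Delta_n$, with $\Delta_n$ an independent boundary term that vanishes when $n=S_{K_n}$; so the quenched law of $T_n$ is, up to that term, a convolution of the laws of $\Theta_1,\dots,\Theta_{K_n}$. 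The first ingredient is a scaling limit for a single crossing: using the excursion structure inside a block together with \eqref{eq:2:assumption2} — which places us in the regime $\beta<2\alpha$, where the heavy gaps rather than the random drifts govern the fluctuations — one shows that the crossing time of a block of length $\ell$, divided by $\ell^2$, converges in law as $\ell\to\infty$, uniformly in the admissible drifts, to a fixed non-degenerate law $\nu$ on $[0,\infty)$ with moments of every order (a $\vartheta$-type functional, cf.\ \eqref{eq:mm1}); in particular $\nu$ is \emph{not} Gaussian, $\var_\omega(\Theta_k)$ is of order $\xi_k^4$ for large $\xi_k$, and the moments of $\Theta_k$ are controlled by powers of $\xi_k$ with $P$-integrable constants, again by \eqref{eq:2:assumption2}.

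Next I would pin down the two possible behaviours. Put $M_n=\max_{k\le K_n}\xi_k$ and $B_n^2=\sum_{k\le K_n}\xi_k^4$; since $\beta<4$ the variable $\xi^4$ is regularly varying with index $\beta/4<1$, so the small blocks are negligible and $\var_\omega(T_n)$ is of order $B_n^2$. Because $(\xi_k)$ is i.i.d.\ with regularly varying tail, for $P$-a.e.\ $\omega$ the ratio $B_n^2/M_n^4=\sum_{k\le K_n}\xi_k^4/(\max_{k\le K_n}\xi_k)^4$ does not converge: there is a subsequence $\mathcal N_{\mathrm s}(\omega)$ along which a single gap carries almost all the fourth-power mass ($M_n^4/B_n^2\to1$) and a subsequence $\mathcal N_{\mathrm g}(\omega)$ along which no gap is macroscopic ($M_n^4/B_n^2\to0$), both following from regular variation and Borel--Cantelli applied to the running maxima and partial sums of $(\xi_k^4)$. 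Taking $n$ to be a marked site on these subsequences (so $\Delta_n=0$), one obtains two \emph{different} weak limits: along $\mathcal N_{\mathrm s}(\omega)$ the dominant block dictates the fluctuations while the remaining blocks contribute a vanishing term, so $(T_n-\Eo T_n)/M_n^2\Rightarrow\bar\nu:=\nu-\E\nu$ (non-degenerate, since $\nu$ is); along $\mathcal N_{\mathrm g}(\omega)$ no single block dominates, the Lindeberg condition holds (by the uniform moment bounds and $B_n\gg M_n^2$), and $(T_n-\Eo T_n)/B_n\Rightarrow\mathcal N(0,c^2)$ with $c>0$.

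From here the argument closes by the convergence-of-types theorem. Suppose that for such an $\omega$ there were sequences $C_n(\omega)$ and $A_n(\omega)>0$ with $(T_n-C_n)/A_n\Rightarrow Y$ for a non-degenerate $Y$. Restricting to $\mathcal N_{\mathrm s}(\omega)$ and applying Khinchin's theorem to this together with $(T_n-\Eo T_n)/M_n^2\Rightarrow\bar\nu$ forces $Y$ to be an affine image of $\bar\nu$, hence a law supported on a half-line and not Gaussian; restricting instead to $\mathcal N_{\mathrm g}(\omega)$ and using $(T_n-\Eo T_n)/B_n\Rightarrow\mathcal N(0,c^2)$ forces $Y$ to be Gaussian — a contradiction. (If one prefers not to invoke the Gaussian subsequence, one can instead exhibit a subsequence along which exactly two gaps carry comparable fourth-power mass; there $(T_n-\Eo T_n)/M_n^2\Rightarrow\bar\nu_1+r^2\bar\nu_2$ with $r>0$ and $\bar\nu_i$ i.i.d.\ copies of $\bar\nu$, and a finite-variance law equal to a sum of two or more rescaled independent copies of itself must be Gaussian — via the functional equation $\hat\mu(t)=\prod_i\hat\mu(d_it)$ with $\sum_i d_i^2=1$ — again contradicting that $\nu$ is not Gaussian.) Since the $\omega$ for which all of this holds form a set of full $P$-measure, the theorem follows.

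I expect the main obstacle to be the single-crossing scaling limit together with the uniform transfer used in the second step: after centring by $\Eo T_n$, the quenched law at the correct (oscillating) scale must be shown to be asymptotically a prescribed functional of the finitely many largest rescaled gaps, with enough uniformity in the drift parameters and enough integrability — precisely what \eqref{eq:2:assumption2} supplies — that subsequential convergence of the gap configuration passes to convergence of the laws. The Borel--Cantelli statements about $(\xi_k)$, though routine in spirit, also require care, since the naive almost-sure bounds on partial maxima of a heavy-tailed i.i.d.\ sequence are too weak and one must work along geometrically spaced scales.
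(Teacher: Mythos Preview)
Your overall strategy—produce two subsequences along which the quenched law has incompatible non-degenerate limits and then invoke convergence of types—is exactly the paper's, and your alternative~(b) with two comparable gaps is essentially the argument the paper runs (it obtains $2\vartheta-1\stackrel{d}{=}v(2\vartheta_v-1)+w(2\vartheta_w-1)$ for independent copies and derives the contradiction from the explicit Laplace transform rather than your Gaussian-characterisation lemma).

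There is, however, a genuine gap in how you treat the single crossing. The block crossing time $\Theta_k=T_{S_k}-T_{S_{k-1}}$ is \emph{not} a function of $(\xi_k,\lambda_{k-1},\lambda_k)$ alone: after reaching $S_{k-1}$ the walk may excurse arbitrarily far to the left, so $\Theta_k$ depends on the entire past environment. The statement that $\Theta_k/\ell^2$ converges ``uniformly in the admissible drifts'' to a fixed law, with moments controlled by powers of $\xi_k$ and $P$-integrable constants, is therefore not correct as written. What is true is the decomposition $\Theta_k=\Theta_k^r+\Theta_k^l$ with $\Theta_k^r/\xi_k^2\Rightarrow 2\vartheta$, while $\Theta_k^l$ carries the past-dependence; showing that $\sum_{k\le n}\var_\omega\Theta_k^l$ is negligible against $\sum_{k\le n}\xi_k^4$ is precisely the content of Lemmas~\ref{lem:m1} and~\ref{lem:var} and is where \eqref{eq:2:assumption2} enters.

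This feeds into a second gap: that negligibility is proved only in $P$-probability, and you need it \emph{almost surely along your chosen subsequences}. The paper resolves this by introducing the barrier process $\overline{X}$ (so that the relevant events on disjoint scale windows become genuinely independent) and by engineering the scales $p_n\ll q_n$ and the events $U_n$ of Lemma~\ref{lem:dom} so that Borel--Cantelli applies. Your Borel--Cantelli heuristics concern only the i.i.d.\ gaps $(\xi_k)$, but the events you actually need involve the quenched variances, which are not independent across blocks because of the left excursions; without something like the barrier construction you cannot pass from ``in probability'' to ``a.s.\ along a subsequence''.

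Finally, your primary route via a Gaussian subsequence hinges on $\liminf_n M_n^4/B_n^2=0$ almost surely for $\beta/4<1$. This is plausible but is not routine regular-variation bookkeeping: for indices below $1$ the maximum and the sum live on the same scale, and the a.s.\ $\liminf$ requires an argument (e.g.\ exploiting exceptionally long inter-record waiting times together with a blocking scheme) that you have not supplied. The paper sidesteps this entirely by never invoking a Gaussian limit; both of its subsequential limits are built from the law of $\vartheta$.
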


	Therefore, as in the case of i.i.d. environment, the asymptotic quenched behaviour of $T_n$'s ought to be expressed in terms of  weak quenched convergence. As it is the case for annealed
	limit theorem, one needs to distinguish between a moderately ($\E \xi <\infty$) and strongly ($\E \xi =\infty$) sparse random environment.

  	To describe the former take $\{\vartheta_j\}_{j \in \NN}$ to be a sequence of i.i.d. copies of $\vartheta$ distributed according to~\eqref{eq:mm1} and let $ G: \M_p \to \M_1$
	be given via
  	\begin{equation}\label{eq:G}
    		G(\zeta)(\cdot) = \left\{
      			\begin{array}{cc}
        			\P\left[ \sum_{i\ge 1} x_i(2\vartheta_i-1)\in \cdot \right], \ \  & \int x^2 \zeta (\ud x)<\infty, \\
        			\delta_0(\cdot) & \mbox{otherwise,}
      			\end{array}\right.
  	\end{equation}
  	for $\zeta = \sum_{i\ge 1}\delta_{x_i}$, where $\{x_i\}$ is an arbitrary enumeration of the point measure.

	\begin{thm}\label{thm:2:m1}
		Assume~\eqref{eq:right_transience}, \eqref{eq:2:assumption} and~\eqref{eq:2:assumption2}. If $\E \xi <\infty$, then
		\begin{equation*}
			\P_\omega \left[ (T_n - \E_\omega T_n) / a_n^2 \in \cdot  \right] \Rightarrow   G(N)(\cdot)
    		\end{equation*}
    		in $\mathcal{M}_1$, where $N$ is a Poisson point process on $(0, \infty)$ with intensity  $ \beta x^{-\beta/2-1} \ud x/2 \E \xi$.
  	\end{thm}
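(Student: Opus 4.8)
The plan is to decompose $T_n$ into the contribution coming from the marked (drift) sites and the contribution coming from the excursions in the unmarked (symmetric) stretches, show that under the scaling $a_n^2$ only the long gaps matter, and identify the limiting point process governing those long gaps. Write $K_n = \max\{k : S_k \le n\}$ for the number of marked sites before $n$, and split $T_n - \E_\omega T_n = \sum_{k} \Delta_k$ into the per-gap fluctuations, where each $\Delta_k$ collects the centred time spent traversing the $k$-th gap together with the backtracking induced by the drift at $S_k$. The key structural fact, already exploited in the annealed analysis of~\cite{buraczewski:2020:random}, is that crossing a symmetric gap of length $\xi_k$ takes a time of order $\xi_k^2$, and more precisely that, conditionally on the environment, the centred crossing time of a gap of length $\xi_k$ is approximately $\xi_k^2(2\vartheta_k - 1)$ with $\vartheta_k$ as in~\eqref{eq:mm1} (the factor $2\vartheta$ being the Brownian exit time from $[-1,1]$), while the drift sites contribute only lower-order terms once~\eqref{eq:2:assumption2} holds, in particular $\E[\rho^{2\gamma}] < 1$ which controls the geometric factors $\prod \rho_j$ multiplying the excursion costs.

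First I would set up the quenched point process $N_{n,\omega} = \sum_{k \le K_n} \delta_{\xi_k^2 / a_n^2}$ recording the rescaled squared gap lengths. Since $K_n / n \to 1/\E\xi$ a.s.\ by the renewal theorem (using $\E\xi<\infty$), and since $\P[\xi^2/a_n^2 > x] = \P[\xi > a_n\sqrt{x}] \sim n^{-1}x^{-\beta/2}$ by regular variation and the choice $n\P[\xi>a_n]\to1$, the classical Poisson convergence of empirical point processes of i.i.d.\ regularly varying variables (e.g.\ Resnick) gives $N_{n,\omega} \Rightarrow N$ on $(0,\infty)$ with intensity $\tfrac{\beta}{2\E\xi} x^{-\beta/2-1}\,\ud x$ — but note this convergence is \emph{annealed}, i.e.\ in distribution over $\omega$, which is exactly the mode of convergence in the conclusion. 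Next I would show that the map $\omega \mapsto \mu_{n,\omega}$ is, up to an error that vanishes in probability in $\mathcal M_1$, a continuous functional of $N_{n,\omega}$: concretely, $\mu_{n,\omega}$ is close to the law (under $\P_\omega$, i.e.\ over the fresh exponential/Brownian randomness) of $\sum_{k\le K_n} (\xi_k^2/a_n^2)(2\vartheta_k-1)$, which is precisely $G(N_{n,\omega})$ up to the truncation in the definition~\eqref{eq:G}. Combining the continuous-mapping theorem for $G$ (continuity of $G$ at point configurations with $\int x^2\zeta(\ud x)<\infty$, established in the i.i.d.\ setting and transferable here) with $N_{n,\omega}\Rightarrow N$ then yields $\mu_{n,\omega}\Rightarrow G(N)$ in $\mathcal M_1$.

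The main obstacle, and where most of the work lies, is the \emph{reduction step}: proving that the discrepancy between $\mu_{n,\omega}$ and $G(N_{n,\omega})$ — measured in Prokhorov distance — tends to $0$ in $\P$-probability. This requires (i) a precise quenched estimate, valid for $\P$-typical $\omega$, that the centred gap-crossing time of gap $k$, as a random variable under $\P_\omega$, is within a small $L^2(\P_\omega)$ (or Wasserstein) distance of $\xi_k^2(2\vartheta_k-1)$, with an error controlled by moments of $\xi_k$ of order below $\beta/2$ and by the environmental factors $\rho_j$; (ii) a truncation argument showing the small gaps (those with $\xi_k^2 \le \epsilon a_n^2$) contribute negligibly in the limit $n\to\infty$ then $\epsilon\to0$, which is where the constraint $\gamma > \beta/4$, equivalently $\beta/2 < 2\gamma$, is used so that $\E[\xi^{2\gamma}\cdot(\text{env.\ factors})]<\infty$ by~\eqref{eq:2:assumption2} dominates the $\sum (\xi_k^2/a_n^2)$-type sums; and (iii) handling the cross terms and the drift-site backtracking uniformly over $k$, again absorbed by $\E[\rho^{2\gamma}]<1$. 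Assembling (i)–(iii) into a single statement of the form ``$d_{\mathrm{Prok}}(\mu_{n,\omega}, G(N_{n,\omega})) \topr 0$'' is the crux; once it is in hand, the Poisson limit and continuity of $G$ finish the proof routinely.
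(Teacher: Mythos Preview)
Your strategy matches the paper's almost exactly: reduce $T_n-\E_\omega T_n$ to $\sum_k \xi_k^2(2\vartheta_k-1)$ via a coupling and a quenched-variance estimate on the left-excursion piece (the paper's Proposition~\ref{prop:4:right} and Lemma~\ref{lem:var}, corresponding to your (i) and (iii)), then push the point process $\sum\delta_{\xi_k^2/a_n^2}$ through $G$, with a truncation to handle the small gaps (your (ii)). Two points deserve correction, though.

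First, your appeal to ``continuity of $G$ at point configurations with $\int x^2\zeta(\ud x)<\infty$'' is not valid: $G$ is \emph{not} continuous on $\mathcal M_p$ in the vague topology, because vague convergence on $(0,\infty]$ does not control mass accumulating near $0$, and that mass can change the law of $\sum x_i(2\vartheta_i-1)$. The paper states this explicitly and instead proves the three-step approximation $G_\varepsilon(N_n)\Rightarrow G_\varepsilon(N_\infty)$, $G_\varepsilon(N_\infty)\Rightarrow G(N_\infty)$, and $\lim_{\varepsilon\to 0}\limsup_n\P[\rho(G_\varepsilon(N_n),G(N_n))>\delta]=0$ (Billingsley's Theorem~3.2). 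Your item (ii) is exactly the third condition, so you have the right ingredient, but you should not frame it as a continuous-mapping argument for $G$ itself; the truncated map $G_\varepsilon$ is what is continuous.

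Second, the paper does not work with the random index $K_n=\max\{k:S_k\le n\}$. Instead it first proves the result along the deterministic subsequence $S_{\lfloor \alpha n\rfloor}$ with $\alpha=1/\E\xi$ (this gives a clean i.i.d.\ point-process limit with $n$ summands), and then shows $a_n^{-4}\var_\omega[T_{S_{\lfloor\alpha n\rfloor}}-T_n]\to 0$ in $\P$-probability using the law of large numbers $S_{\lfloor\alpha n\rfloor}/n\to 1$. Your route via $K_n$ is workable but requires an extra argument, since $K_n$ and the $\xi_k$'s are dependent; the paper's deterministic-index detour sidesteps this.
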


  	Before we introduce the notation necessary to state our results in the strongly sparse random environment, we will first treat the critical case which is relatively simple to state.
	Denote	
	\begin{equation*}
		m_n = n\E \left[ \xi { \bf 1}_{\{ \xi \leq a_n \}} \right].
	\end{equation*}
	Note that by Karamata's theorem~\cite[Theorem 1.5.11]{bingham:1987:regular} the sequence $\{m_n\}_{n \in \NN}$ is regularly varying with index $1/\beta$. Furthermore $a_n = o(m_n)$ if $\beta=1$ and
	$a_n\sim (1-\beta)m_n$ if  $\beta<1$.
	Next let $\{ c_n\}_{n \in \NN}$ be the asymptotic inverse of $\{m_n\}_{n \in \NN}$, i.e. any increasing sequence of natural numbers such that
	\begin{equation*}
		\lim_{n \to \infty}c_{m_n}/n = \lim_{n \to \infty}m_{c_n}/n = 1.
	\end{equation*}
	By the properties of an asymptotic inversion of regularly varying sequences~\cite[Theorem 1.5.12]{bingham:1987:regular}, $c_n$ is well defined up to asymptotic equivalence and is regularly varying with index $\beta$.
	Finally, by the properties of the composition of regularly varying sequences $\{a_{c_n}\}_{n \in \NN}$ is regularly varying with index $1$ and $a_{c_n} = o(n)$ if $\beta =1$.

	\begin{thm}\label{thm:2:m2}
		Assume~\eqref{eq:right_transience}, \eqref{eq:2:assumption} and~\eqref{eq:2:assumption2}. If $\E \xi =\infty$ and $\beta=1$, then
		\begin{equation*}
			\P_\omega \left[ (T_n - \E_\omega T_n)/a_{c_n}^2 \in \cdot  \right] \Rightarrow   G(N)(\cdot)
    		\end{equation*}
    		in $\mathcal{M}_1$, where $N$ is a Poisson point process on $(0, \infty)$ with intensity $ x^{-3/2} \ud x/2$.
	\end{thm}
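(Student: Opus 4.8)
\emph{Overall strategy.}
The plan is to follow the same route as for Theorem~\ref{thm:2:m1}, replacing the renewal law of large numbers $S_k\sim k\,\E\xi$ (unavailable here since $\E\xi=\infty$) by the weak law $S_k/m_k\to 1$, which is where the inverse sequence $c_n$ enters. Concretely, following the analysis of \cite{buraczewski:2019:random,buraczewski:2020:random}, I would decompose $T_n$ at the marked sites: writing $K_n=\#\{k\ge 1:S_k\le n\}$ for the number of marked sites in $[0,n)$, one has
\[
T_n-\E_\omega T_n=\sum_{k=1}^{K_n}W_{n,k}+R_n,
\]
where $W_{n,k}$ is the centred time the walk spends in the $k$-th block before $T_n$ --- a functional of the Kesten--Kozlov--Spitzer edge–local–time recursion in the block (driven by $\lambda_k$) together with a simple symmetric excursion over $\xi_k$ sites --- and $R_n$ is a boundary error with $\E_\omega[R_n^2]=o(a_{c_n}^4)$ in $\P$-probability. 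Under $\P_\omega$ the blocks are independent conditionally on the marked–site skeleton $(U_{S_k})_k$, and since \eqref{eq:2:assumption2} forces that skeleton to have uniformly bounded moments and to mix geometrically, after a standard coupling the $W_{n,k}$ may be treated as genuinely $\P_\omega$-independent up to errors negligible at scale $a_{c_n}^2$. Because $\P[\xi>t]$ is regularly varying of index $-1$ with $\E\xi=\infty$, we have $S_k/m_k\to1$ and hence $K_n/c_n\to1$, both in $\P$-probability; this is precisely what replaces $n/\E\xi$ from Theorem~\ref{thm:2:m1} by $c_n$ and dictates the normalisation $a_{c_n}^2$.

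\emph{Point process of large blocks.}
The first key input is the convergence of the point process of rescaled squared gaps,
\[
\sum_{k=1}^{K_n}\delta_{\xi_k^2/a_{c_n}^2}\ \Rightarrow\ N,
\]
which I would deduce from $K_n\sim c_n$, the defining relation $c_n\P[\xi>a_{c_n}]\to1$, the uniform convergence theorem for slowly varying functions, and the standard Poisson limit for triangular arrays; the resulting Lévy tail is $\bar\nu(x,\infty)=x^{-1/2}$, i.e.\ intensity $\tfrac12 x^{-3/2}\,\ud x$, as in the statement. The second key input is a one–block invariance principle: if $\xi_k\sim\sqrt{x}\,a_{c_n}$, then the conditional law under $\P_\omega$ of $W_{n,k}/a_{c_n}^2$ converges to the law of $x(2\vartheta_k-1)$ with $2\vartheta_k$ the Brownian exit time from $[-1,1]$ as in~\eqref{eq:mm1} --- the single drifted endpoint and the finitely many re-entries into the block (finite by transience, \eqref{eq:right_transience}) contributing only lower order. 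Combining the two inputs, for fixed $\epsilon>0$ the quenched law of the $\epsilon$-truncated sum $a_{c_n}^{-2}\sum_{k:\xi_k^2>\epsilon a_{c_n}^2}W_{n,k}$ converges in distribution, as a random element of $\M_1$, to $\P\big[\sum_{i:x_i>\epsilon}x_i(2\vartheta_i-1)\in\cdot\big]=G(N^{(\epsilon)})$, where $N^{(\epsilon)}$ is $N$ restricted to $(\epsilon,\infty)$; here the $\P_\omega$-independence of distinct blocks is what turns this into a finite sum of independent summands given $\omega$.

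\emph{Negligible part and conclusion.}
It remains to control $D_n^{(\epsilon)}:=a_{c_n}^{-2}\sum_{k:\xi_k^2\le\epsilon a_{c_n}^2}W_{n,k}$. Using a bound of the form $\var_\omega(W_{n,k})\le C\,\xi_k^2\,\Psi(\rho_{k'}:k'\approx k)$ for a functional $\Psi$ whose relevant moments are finite exactly because $\E[\rho^{2\gamma}]<1$ and $\E[\xi^\gamma\rho^{3\gamma}]<\infty$, together with Karamata's theorem~\cite[Theorem~1.5.11]{bingham:1987:regular} in the form $\E[\xi^2\mathbbm{1}_{\{\xi\le t\}}]\sim t^2\P[\xi>t]$ valid at $\beta=1$, one obtains $\E_\omega[(D_n^{(\epsilon)})^2]\to\Phi(\epsilon)$ in $\P$-probability with $\Phi(\epsilon)=O(\epsilon^{1/2})\to0$ as $\epsilon\downarrow0$. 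Since $\int_{(0,1]}x^2\,N(\ud x)<\infty$ a.s., one also has $G(N^{(\epsilon)})\to G(N)$ a.s.\ in $\M_1$ as $\epsilon\downarrow0$. A routine triangle–inequality argument in the Prokhorov distance, exploiting once more that the $\epsilon$-large and $\epsilon$-small parts are $\P_\omega$-independent (so $\mu_n$ is the convolution of their quenched laws), then upgrades the truncated convergence to $\mu_n=\P_\omega[(T_n-\E_\omega T_n)/a_{c_n}^2\in\cdot]\Rightarrow G(N)$.

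\emph{Main obstacle.}
The essential new difficulty, compared with the case $\E\xi<\infty$ of Theorem~\ref{thm:2:m1}, is the absence of almost sure renewal control: statements such as $K_n/c_n\to1$ only hold in $\P$-probability, and $c_n$ (hence $a_{c_n}$) is determined only up to asymptotic equivalence, so every estimate above has to be run at the level of convergence in distribution of the random measures $\mu_n$ rather than $\P$-a.s., and one must verify that replacing $K_n$ by $c_n$, and the ambiguity in $c_n$, leave the limit unchanged. A secondary technical point is the one–block invariance principle with a drifted endpoint and re-entries, but this is essentially inherited from the annealed analysis in \cite{buraczewski:2020:random} and the identification of $\vartheta$ through the Brownian exit time, so I expect the renewal–count bookkeeping in the $\beta=1$, $\E\xi=\infty$ regime to be the delicate part.
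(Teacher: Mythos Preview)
Your overall strategy is correct and follows the paper's route: reduce to a sum $\sum_{k}\xi_k^2(2\vartheta_k-1)$ via block-wise approximation, replace the random block count by $c_n$ using $\nu_n/c_n\to1$ in probability, and invoke the point-process argument that yields $G(N)$. Two points deserve correction, however.

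First, your variance bound $\var_\omega(W_{n,k})\le C\,\xi_k^2\,\Psi(\rho_{k'})$ is off by a power. The dominant part of the block crossing time is $\T_k^r$, which under $\P_\omega$ is exactly distributed as the exit time $U_{\xi_k}$ of a reflected simple walk from $[0,\xi_k]$, with variance $\tfrac23(\xi_k^4-\xi_k^2)$ (Lemma~\ref{lem:m3}), not $O(\xi_k^2)$. With the correct bound the Karamata truncation still works (giving $O(\varepsilon^{3/2})$ rather than $O(\varepsilon^{1/2})$), so this is a slip, not a structural gap. The role of assumption~\eqref{eq:2:assumption2} is not to control the block variances but to show \emph{separately} that the left-excursion contribution $\T_k^l$ is negligible at scale $a_{c_n}^2$ (Lemma~\ref{lem:var}); the paper's split $\T_k=\T_k^r+\T_k^l$ is arranged precisely so that $\T_k^r$ depends only on $\xi_k$ and the $\rho$'s enter solely through the discarded $\T_k^l$. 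Relatedly, under $\P_\omega$ the $\T_k$ are \emph{genuinely} independent by the strong Markov property at the successive $T_{S_k}$, so no conditioning on a ``marked-site skeleton'' or coupling is needed, and the ``drifted endpoint and re-entries'' you worry about are exactly what gets absorbed into $\T_k^l$.

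Second, the paper's organisation is cleaner than working directly with the random index $K_n$. It first proves, for \emph{deterministic} $n$, that $\P_\omega[(T_{S_n}-\E_\omega T_{S_n})/a_n^2\in\cdot]\Rightarrow G(N)$ (Theorem~\ref{thm:4:Sn}), then substitutes $n\leadsto c_n$ and shows that replacing $c_n$ by $\nu_n-1$ costs nothing via the elementary estimate $\E_\omega\big(\sum_{k=c_n+1}^{\nu_n}\xi_k^2(2\vartheta_k-1)\big)^2\le C\sum_{k\le\varepsilon c_n}\xi_k^4=O_\P(\varepsilon^4 a_{c_n}^4)$ on $\{|\nu_n-c_n|\le\varepsilon c_n\}$. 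This sidesteps the random-index point-process limit that your direct approach would require. The boundary term $T_n-T_{S_{\nu_n-1}}$ is handled by Lemma~\ref{lem:5:u0}: for $\beta=1$ the overshoot satisfies $S_{\nu_n-1}/n\to1$, so $(1-\Xi)^2=0$ and the residual $(n-S_{\nu_n-1})^2(2\vartheta_0-1)/n^2$ vanishes, matching your claim that $R_n$ is negligible.
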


	The limiting random measures in Theorems~\ref{thm:2:m1} and~\ref{thm:2:m2} share some of the properties of their counterpart in the case of i.i.d. environment~\cite[Remark 1.5]{peterson:2013:weak}.
	Namely, using the superposition and scaling
	properties of Poisson point processes, one can directly show that for each $n\in \NN$ and $G, G_1, \ldots , G_n$ being i.i.d. copies of the limit random measure $G(N)$ in Theorem~\ref{thm:2:m1} or Theorem~\ref{thm:2:m2},
	\begin{equation}\label{eq:2:Gsimilar}
		G_1 * G_2* \ldots * G_n (\cdot) \stackrel{d}{=} G( \cdot / n^{2/\beta}).
	\end{equation}

	The statement of our results in the strongly sparse case needs some additional notation. As it is the case for the annealed results, it is most convenient to work in the framework of non-decreasing c\`adl\`ag functions rather
	than point processes.
	Denote by $\mathbb{D}^\uparrow$ the class of non-decreasing c\`adl\`ag functions $\RR_+ \to \RR_+$
  	and for $h \in \mathbb{D}^\uparrow$ consider
  	\begin{equation}\label{eq:2:upsilon}
  		\Upsilon(h) = \sup \{ h(t) \: : \: t \in \RR_+, \: h(t) \leq 1 \}.
  	\end{equation}
	Finally for $h \in \mathbb{D}^\uparrow$ denote by $ \{ x_k(h),  t_k (h)\}_k$ an arbitrary enumeration of jumps of $h$, that is $t_k=t_k(h) \in \RR_+$ for $k \in \NN$ are all points on the non-negative half line such that
	$h$ has a (left) discontinuity with jump of size $x_k(h) = h( t_k) - h(t_k^-)>0$ at $t_k$.
	Note that the random series $\sum_{k: h(t_k) \leq 1} x_k(h)^2 (2\vartheta_k-1)$ is convergent since it has an expected value bounded via $h(1)\EE|2\vartheta-1|$.
	Finally let $F \colon \mathbb{D}^\uparrow \to \mathcal{M}_1$ be given by
  	\begin{equation*}
    		F(h)(\cdot)= \P\left[ (1-\Upsilon(h))^2 (2\vartheta_0-1) + \sum_{k: h(t_k)\leq 1} x_k(h)^2(2\vartheta_k-1) \in  \: \cdot \: \right].
  	\end{equation*}
 	Note that if $h(t)=1$ for some $t$, then necessarily $\Upsilon (h)=1$.
 	 \begin{thm}\label{thm:2:m3}
    		Assume~\eqref{eq:right_transience}, \eqref{eq:2:assumption} and~\eqref{eq:2:assumption2}. If 
    $\beta \in (0,1)$, then
    		\begin{equation*}
      			\P_\omega \left[ (T_n-\E_\omega T_n)/n^{2} \in \cdot  \right] \Rightarrow F(L)(\cdot)
    		\end{equation*}
  		in $\mathcal{M}_1$, where $L$ is a $\beta$-stable L\'evy subordinator with L\'evy measure $\nu(x, +\infty) =x^{-\beta}$.
  	\end{thm}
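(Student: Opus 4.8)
\textbf{Proof proposal for Theorem~\ref{thm:2:m3}.}

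The plan is to transfer the annealed convergence $T_n/n^2 \Rightarrow 2L_2(L_1^{\leftarrow}(1)^-) + 2\vartheta(1-L_1(L_1^{\leftarrow}(1)^-))^2$ from \cite[Theorem 21]{buraczewski:2020:random} into the quenched framework, by first isolating the purely environmental randomness and then showing that, conditionally on the environment, the remaining fluctuations are governed by a sum of independent Brownian exit times indexed by the gaps. Concretely, recall that under $\P_\omega$ the passage time $T_n$ decomposes as $T_n = \sum_{k} \tau_k$, where $\tau_k$ is the time spent by the walk crossing the $k$-th block between consecutive marked sites $S_{k-1}$ and $S_k$ before reaching $n$. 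The key structural observation is that each $\tau_k$, after the correct centering by $\E_\omega\tau_k$ and scaling by $\xi_k^2$, is asymptotically (as the gap size $\xi_k\to\infty$) distributed as $2\vartheta_k - 1$, since crossing a long symmetric segment of length $\xi_k$ is a discrete Brownian-scaled exit problem and $2\vartheta$ is precisely the exit time of Brownian motion from $[-1,1]$; the drift terms at the marked sites contribute the $\rho_k$-dependent weights that assemble into the subordinator $L$. The first step is therefore to make this block decomposition precise and to identify the random c\`adl\`ag function $L_n(t) := a_n^{-2}\sum_{k \le S^{\leftarrow}(nt)} (\text{block contribution})$ — or rather, since we are in the $\E\xi=\infty$, $\beta\in(0,1)$ regime, the process built from the gaps $\xi_k$ themselves — and to show that $L_n \Rightarrow L$ in $\mathbb{D}^\uparrow$ with the $J_1$ (or $M_1$) topology, where $L$ is the $\beta$-stable subordinator with $\nu(x,\infty)=x^{-\beta}$. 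This is essentially the classical functional convergence of trimmed/partial-sum processes of regularly varying summands and can be imported from the earlier annealed analysis.

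The second and central step is to prove the quenched statement: $\P_\omega[(T_n - \E_\omega T_n)/n^2 \in \cdot] \Rightarrow F(L)(\cdot)$ in $\mathcal{M}_1$. Following the strategy that works for i.i.d.\ environments (the map $H$ and Poisson process $N$ in the Introduction), I would introduce the map $F \colon \mathbb{D}^\uparrow \to \mathcal{M}_1$ as defined above and argue in two pieces. First, a \emph{continuity} statement: $F$ is continuous at $\P$-a.e.\ realization of $L$ — more precisely, continuous on the set of $h\in\mathbb{D}^\uparrow$ that are strictly increasing at $\Upsilon^{\leftarrow}$, have no jump landing exactly at height $1$, and have $\sum_{k: h(t_k)\le 1} x_k(h)^2 < \infty$; one checks $L$ lies in this set a.s.\ because a $\beta$-stable subordinator a.s.\ does not hit a fixed level by a jump and $\sum x_k^2 < \infty$ a.s.\ when $\beta < 1$ (indeed $\beta<2$ suffices for square-summability of jumps up to a fixed time, and the level-$1$ passage time is a.s.\ finite). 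By the continuous mapping theorem applied to $L_n \Rightarrow L$, this yields $F(L_n) \Rightarrow F(L)$ in $\mathcal{M}_1$. Second, a \emph{coupling/approximation} statement identifying the conditional law $\mu_{n,\omega} = \P_\omega[(T_n-\E_\omega T_n)/n^2 \in \cdot]$ with $F(L_n)$ up to a vanishing error in the Prokhorov metric: here one conditions on the environment, notes that under $\P_\omega$ the block crossing times $\{\tau_k\}$ are independent, computes that $\var_\omega(\tau_k) \asymp \xi_k^4$ on long blocks so that $a_n^{-2}$-scaled contributions have the right asymptotic variances, and invokes a Lindeberg/Lyapunov-type CLT for the "small" blocks together with a point-process convergence for the "large" blocks — the latter being exactly the mechanism producing the jumps $x_k(h)^2(2\vartheta_k-1)$ and the residual boundary term $(1-\Upsilon(h))^2(2\vartheta_0-1)$ coming from the final incomplete block near $n$. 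The term $\Upsilon(L)$ captures how much "mass" of $L$ has accumulated strictly before level $1$, so $1-\Upsilon(L)$ is the length-fraction of the gap straddling $n$, explaining the boundary term. A Fubini argument then upgrades "$F(L_n)\Rightarrow F(L)$ and $\mu_{n,\omega}\approx F(L_n)$" to "$\mu_{n,\omega}\Rightarrow F(L)$" as random elements of $\mathcal{M}_1$.

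For the \emph{absence of strong limits} part (which feeds Theorem~\ref{mthm0} but is also implicitly needed to justify that the weak framework is the right one), one observes that $F(L)$ is genuinely random: along different subsequences the environment near $n$ — governed by the last big jump of $L$ before level $1$ and by $\Upsilon(L)$ — oscillates, so $\mu_{n,\omega}$ cannot converge a.s.\ to a fixed $\mu$; this follows formally from the fact that $F(L)$ has a non-degenerate distribution (its Laplace transform, via \eqref{eq:mm1}, is a genuine random function) together with a zero-one-law argument on the tail $\sigma$-algebra of the environment.

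\textbf{Main obstacle.} I expect the hardest part to be the quenched approximation $\mu_{n,\omega} \approx F(L_n)$ uniformly well in Prokhorov distance — specifically, controlling the interaction between the "bulk" Gaussian fluctuations (from the many short blocks, each individually negligible but collectively of order $a_n^2$ in the $\E\xi<\infty$ case, and of a delicate order when $\E\xi=\infty$, $\beta<1$) and the "extremal" contributions from $O(1)$ long blocks, while simultaneously handling the non-trivial boundary block that contains $n$. One must show that the conditional variance from short blocks, after scaling by $n^{-2}$, actually vanishes in the $\beta\in(0,1)$ regime (unlike Theorems~\ref{thm:2:m1}–\ref{thm:2:m2} where it survives and produces the Poisson term), so that $F(L)$ has \emph{no} Gaussian component beyond the $\vartheta$'s attached to the large jumps — this is precisely why the limit map is $F$ and not a $G$-type map. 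Verifying this requires sharp two-sided estimates on $\var_\omega(\tau_k)$ in terms of $\xi_k$ and $\rho_k$, uniform over the environment, which is where the technical moment condition $\E[\xi^\gamma\rho^{3\gamma}]<\infty$ in \eqref{eq:2:assumption2} will be consumed.
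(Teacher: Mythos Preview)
Your overall architecture --- functional convergence $L_n \Rightarrow L$ of the rescaled gap process, a quenched coupling that replaces the centred block crossing times by $\xi_k^2(2\vartheta_k-1)$, identification of the boundary term $(1-\Upsilon(L))^2(2\vartheta_0-1)$ from the incomplete block straddling $n$ --- is exactly the paper's strategy, and your identification of where the moment hypothesis~\eqref{eq:2:assumption2} is consumed is correct. Two points, however, diverge from the paper and one of them is a genuine gap.

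\textbf{Truncation versus continuous mapping.} You propose to deduce $F(L_n)\Rightarrow F(L)$ by proving that $F$ is $J_1$-continuous at $\P$-a.e.\ realisation of $L$ and invoking the continuous mapping theorem. The paper deliberately avoids this. It explicitly notes (in the proof of Theorem~\ref{thm:4:Sn}) that the analogous point-process map $G$ is \emph{not} continuous, and handles both $G$ and $F$ via Billingsley's three-step scheme: introduce the $\varepsilon$-truncated maps $F_\varepsilon$, prove $F_\varepsilon^n \Rightarrow F_\varepsilon^\infty$ for fixed $\varepsilon$ (this \emph{is} a continuous mapping argument, but on finitely many jumps), then $F_\varepsilon^\infty \Rightarrow F(L)$ as $\varepsilon\to 0$, and finally the uniform control $\lim_{\varepsilon\to 0}\limsup_n \P[\rho(F_\varepsilon^n, F^n)>\delta]=0$. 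Your continuity claim for $F$ on $\mathbb{D}^\uparrow$ is not obviously false --- the $J_1$ topology on monotone functions does constrain the quadratic variation of small jumps better than the vague topology on point measures does --- but it is nowhere proved in the paper and would require its own argument; the truncation route is more robust and is what the paper actually does.

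\textbf{The role of small blocks and the left/right decomposition.} Your plan speaks of a ``Lindeberg/Lyapunov-type CLT for the small blocks'' which you then argue should vanish. This is not how the paper proceeds, and the intuition is slightly off: there is no Gaussian mechanism at all. Every block, small or large, contributes a term $\xi_k^2(2\vartheta_k-1)$ after the coupling of Section~\ref{subsec:coupling}; the sum over small jumps converges simply because $\sum_k x_k(L)^2<\infty$ a.s., not because of any central limit averaging. More importantly, your sketch does not isolate the decomposition $\T_k = \T_k^r + \T_k^l$ and the fact that the $\rho_k$-dependence enters \emph{only} through $\T_k^l$. The paper shows separately (Lemma~\ref{lem:var}, Lemma~\ref{lem:4:nun}) that $n^{-4}\var_\omega T^l_{S_{\nu_n}} \to 0$ in $\P$-probability, which is precisely where the condition $\E[\xi^\gamma\rho^{3\gamma}]<\infty$ is used, and then works exclusively with $\T_k^r \stackrel{d}{=} U_{\xi_k}$, which depends on $\xi_k$ alone and admits the clean $2\vartheta$ coupling. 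Your phrase ``the drift terms at the marked sites contribute the $\rho_k$-dependent weights that assemble into the subordinator $L$'' conflates two different objects: the subordinator $L$ is built purely from the $\xi_k$'s, and the $\rho_k$'s are disposed of via the left-variance estimate, not absorbed into $L$.
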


	Interestingly the limit measure $F(L)$ does not enjoy a self-similarity property in the sense of~\eqref{eq:2:Gsimilar}. Namely, for any $a,b \in \RR$, $b>0$ the laws of
	\begin{equation*}
		F_1* F_2 (\cdot) \quad \mbox{and} \quad F(( \cdot -a)/b)
	\end{equation*}
	are different, where $F, F_1$ and $F_2$ are independent copies of the limiting random measure $F(L)$ in Theorem~\ref{thm:2:m3}.

\section{Auxiliary results}\label{sec:aux}

	We will now present a few lemmas that we will use in our proofs. We will discuss properties of some random series as well as the asymptotic behaviour of the hitting
	times~\eqref{eq:1:FirstPassage}.

\subsection{Estimates for the related stochastic processes  $\{R_i\}_{i\in\Z}$ and $\{W_i\}_{i\in\Z}$. }

	We will frequently make use of the following notation: for integers $i\le j$,
	\begin{equation}\label{eq:1}
		\Pi_{i,j} = \prod_{k=i}^j \rho_k,\qquad R_{i,j} = \sum_{k=i}^{j} \xi_k\Pi_{i,k-1}, \qquad W_{i,j} = \sum_{k=i}^j \xi_k\Pi_{k,j}.
	\end{equation}
	We will also make use of the the limits
	\begin{equation}\label{eq:2}
		R_{i} = \lim_{j\to \infty} R_{i,j} =  \sum_{k=i}^{\infty} \xi_k\Pi_{i,k-1}, \qquad W_{j} = \lim_{i\to -\infty} W_{i,j} = \sum_{k=-\infty }^j \xi_k\Pi_{k,j}.
	\end{equation}
	Note that if $\E \log \rho < 0$ and $\E \log \xi < \infty$, both series are convergent as one can see by a~straightforward application of the law of large numbers and the Borel-Cantelli lemma (see  \cite[Theorem 2.1.3]{buraczewski:2016:power}).
	The random variables $R_i$'s and $W_j$'s have the same distribution and obey the recursive formulae
	\begin{equation*}
		R_i = \xi_i +\rho_iR_{i+1} \qquad \mbox{ and }\qquad  W_j = \rho_j\xi_j+\rho_jW_{j-1}.
	\end{equation*}
	We can therefore invoke the proof of \cite[Lemma 2.3.1]{buraczewski:2016:power} to infer the following result on the existence of moments of $R_i$'s and $W_j$'s.
	 In what follows we write $R$ (respectively $W$) for a generic element of $\{R_i\}_{i \in \ZZ}$ (respectively $\{W_j\}_{j \in \ZZ}$).

	\begin{lem}\label{lem:moments}
		Let $\alpha>0$. If $\E \rho^\alpha <1$, $\E \rho^\alpha\xi^\alpha <\infty$ and $\E \xi^\alpha <\infty$, then $\E R^\alpha$ and $\E W^\alpha$ are both finite.
	\end{lem}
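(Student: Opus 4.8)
The plan is to reduce the claim to a known fixed-point / moment result for perpetuities and then verify the hypotheses. Recall that $R_i = \xi_i + \rho_i R_{i+1}$, so $R := R_i$ satisfies the distributional identity $R \overset{d}{=} \xi + \rho R'$, where $(\xi,\rho)$ and $R'$ are independent and $R' \overset{d}{=} R$; the same holds for $W$. This is exactly the setting of \cite[Lemma 2.3.1]{buraczewski:2016:power}, whose proof gives finiteness of $\E R^\alpha$ under the three stated assumptions. So the bulk of the argument is to invoke that lemma, after checking that the random series in \eqref{eq:2} is a.s.\ finite (which follows from $\E\log\rho<0$ and $\E\log\xi<\infty$ via the law of large numbers and Borel--Cantelli, as already noted in the text, using $\Pi_{i,k-1}\to 0$ geometrically).

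For a self-contained argument I would proceed directly as follows. First treat $0<\alpha\le 1$: by subadditivity of $x\mapsto x^\alpha$ on $\RR_+$,
\begin{equation*}
	\E R^\alpha = \E\Big(\sum_{k\ge i}\xi_k\Pi_{i,k-1}\Big)^\alpha \le \sum_{k\ge i}\E\big[\xi_k^\alpha \Pi_{i,k-1}^\alpha\big] = \E[\xi^\alpha]\sum_{k\ge 0}\big(\E[\rho^\alpha]\big)^{k},
\end{equation*}
using independence of the $(\xi_k,\rho_k)$ and that $\Pi_{i,k-1}$ involves $\rho_i,\dots,\rho_{k-1}$, which are independent of $\xi_k$; here I have used $\E[\xi^\alpha\rho^\alpha]<\infty$ only implicitly (for $\alpha\le1$ the cruder bound $\E[\xi^\alpha]\,\E[\rho^\alpha]^{k}$ via independence of $\xi_k$ from the product suffices, and $\E[\xi^\alpha]<\infty$ is assumed). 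The geometric series converges because $\E[\rho^\alpha]<1$. For $\alpha>1$ I would instead apply Minkowski's inequality in $L^\alpha$: with $\|\cdot\|_\alpha$ the $L^\alpha$-norm,
\begin{equation*}
	\|R\|_\alpha \le \sum_{k\ge 0}\|\xi_{k+i}\Pi_{i,k+i-1}\|_\alpha = \sum_{k\ge 0}\|\xi\|_\alpha\,\|\rho\|_\alpha^{k-1}\,\|\xi\rho\|_\alpha^{\,0\vee 1}\cdot(\dots),
\end{equation*}
more precisely bounding $\|\xi_{k}\Pi_{i,k-1}\|_\alpha \le \|\xi\rho\|_\alpha\,\|\rho\|_\alpha^{\,k-2}$ by peeling off one factor $\xi_k\rho_{k-1}$ and then the remaining i.i.d.\ $\rho$'s, so the series is dominated by a geometric one with ratio $\|\rho\|_\alpha = (\E\rho^\alpha)^{1/\alpha}<1$. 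Either way $\E R^\alpha<\infty$, and since $W$ has the same distribution as $R$, $\E W^\alpha<\infty$ too.

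The only genuinely delicate point is the case $\alpha>1$: naive term-by-term $L^\alpha$ estimation needs $\E[\xi^\alpha\rho^\alpha]<\infty$ (to handle the first factor $\xi_k\rho_{k-1}$ jointly) together with $\E\rho^\alpha<1$ to get geometric decay of the tail of the sum, and one must be careful that the ``bad'' factor $\xi$ appears only once per term so that the hypothesis $\E[\xi^\alpha]<\infty$ rather than a joint moment of all factors is what is needed. I expect the cleanest exposition is simply to cite \cite[Lemma 2.3.1]{buraczewski:2016:power}, noting that its proof applies verbatim to the contractive affine recursion $R_i=\xi_i+\rho_i R_{i+1}$ (equivalently, that $R$ is the perpetuity $\sum_{k\ge 0}\xi_{k+1}\rho_1\cdots\rho_k$ with the convention $\rho_1\cdots\rho_0=1$), under precisely the three moment assumptions $\E\rho^\alpha<1$, $\E\rho^\alpha\xi^\alpha<\infty$, $\E\xi^\alpha<\infty$; the identity in law $R\overset{d}{=}W$ follows from the fact that $(\rho_i,\xi_i)_i$ is i.i.d., so reversing indices leaves the law of the series unchanged.
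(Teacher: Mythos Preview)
Your approach matches the paper's: the paper gives no proof beyond citing \cite[Lemma~2.3.1]{buraczewski:2016:power} for the affine recursion, and you do the same before adding a self-contained sketch. Two small corrections to that sketch: first, $W$ satisfies $W_j=\rho_j\xi_j+\rho_jW_{j-1}$, i.e.\ $W\overset{d}{=}\rho\xi+\rho W'$ rather than $\xi+\rho W'$, and in particular $R$ and $W$ need \emph{not} have the same law (reversing indices does not preserve the pairing of $\xi_k$ with $\rho_k$ inside each term of the series); second, for $R$ the factor $\xi_k$ is already independent of $\Pi_{i,k-1}$, so $\|\xi_k\Pi_{i,k-1}\|_\alpha=\|\xi\|_\alpha\,\|\rho\|_\alpha^{k-i}$ directly with no ``peeling'' needed, whereas for $W$ the term $\xi_k\Pi_{k,j}$ contains the dependent pair $(\xi_k,\rho_k)$ and it is precisely here that the hypothesis $\E[\rho^\alpha\xi^\alpha]<\infty$ is used. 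Neither point affects the validity of the citation-based argument, which is all the paper actually claims.
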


\subsection{ Hitting times}
We describe now some properties of the sequence of stopping times $T = \{T_n\}_{n \in \NN}$ that allow us to better understand the process $X$ and indicate its ingredients which play an essential role in the proof of our
	main results.
	We will first analyse the hitting times $T$ along the marked sites $S$, that is
	$$
		T_{S_i} = \inf\{n:\; X_n = S_i\}.
	$$

	As it turns out, one can use $R_{i,j}$'s given in~\eqref{eq:1} to represent the exit probabilities from interval $(S_i, S_j)$. That is, for $i<k<j$ we have
	\begin{equation}\label{eq:3}
		\Po^{S_k}[T_{S_i} > T_{S_j}] = \frac{R_{i+1,k}}{R_{i+1,j}}, \qquad   \Po^{S_k}[T_{S_i} < T_{S_j}] = \Pi_{i+1,k}\frac{R_{k+1,j}}{R_{i+1,j}}.
	\end{equation}
	see the proof of~\cite[Theorem 2.1.2]{Zeitouni:2004:random}.
 Let
	$$
		\T_k = T_{S_k} - T_{S_{k-1}}
	$$
	be the time that the particle needs to hit $k$'th marked point $S_k$ after reaching $S_{k-1}$.
One uses $W_j$'s to describe the expected value of $\T_k$:
	\begin{equation}\label{eq:4}
	\Eo \T_k = 	\Eo^{S_{k-1}} T_{S_{k}} = \xi_{k}^2 + 2 \xi_{k}W_{k-1},
	\end{equation}
	see the proof of~\cite[Lemma 2.1.12]{Zeitouni:2004:random}.

Observe that the random variable $\T_k$  can be decomposed into a sum of two parts: the time the trajectory, after reaching $S_{k-1}$
	but before it hits $S_{k}$, spends to the left of $S_{k-1}$ and the time it spends to the right of $S_{k-1}$. For technical reasons that will become clear below, we divide the visits exactly at point $S_{k-1}$
	between these two sets depending on the direction from which the particle enters $S_{k-1}$. To be precise we define
	$$
		\T_k^{l} = \# \big\{ n\in (T_{S_{k-1}}, T_{S_k}]:\; X_n < S_{k-1} \ \mbox{ or } \ (X_{n-1}, X_n) = (S_{k-1}-1, S_{k-1} )   \big\},
	$$
	i.e. $\T_k^l$ is the sum of the time the particle spends in $(-\infty, S_{k-1}-1]$ and the number of steps
	from $S_{k-1}-1$ to $S_{k-1}$. Similarly we define
 	$$
		\T_k^{r} = \# \big\{ n\in (T_{S_{k-1}}, T_{S_k}]:\;  S_{k-1} < X_n \le S_k \ \mbox{ or } \ (X_{n-1}, X_n) = (S_{k-1}+1, S_{k-1} )   \big\}.
	$$
	Thus we can write
  	$$
		\T_k =  T_{S_{k}} - T_{S_{k-1}} = \T_k^{l} + \T_k^{r}.
	$$
  	Observe that given $\omega$, the random variables $\{\T_k\}_{k\in\N}$ are $\Po$ independent, however
  	for fixed $k$, $\T_k^{l}$ and $\T_k^{r}$ mutually depend on each other.
    	Summarizing, we obtain the following decomposition that will be used repeatedly:
  	$$
  		T_{S_k} = \sum_{j=1}^k \T_j = \sum_{j=1}^k \T^l_j + \sum_{j=1}^k \T^r_j =:   T^l_{S_k}+   T^r_{S_k}.
  	$$
	To proceed further we need to analyse  $\T^r_j$, $\T^l_j$ in details and describe their quenched expected value and quenched variance.
	Below we prove that after hitting any of the chosen sites $(S_k)_k$ the consecutive excursions to the left
  	are negligible. This entails that behaviour of $T_{S_k}$ is determined mainly by $T^r_{S_k}$.

\subsection{The sequence {$\{T_{S_n}^r\}$}.}

	Note that, under $\Po$, $\T_k^{r}$ equals in distribution to the time it takes a simple random walk on $[0, \xi_k]$ with a reflecting barrier placed in $0$ to reach $\xi_k$ for the first time
	when starting from $0$. This is the reason we include into $\T^r_{k}$ the visits at $S_{k-1}$, but only those from $S_{k-1}+1$.
	Indeed, let $(Y_n)_n$ be a simple random walk on $\Z$ independent of the environment $\omega$. Define
	\begin{equation}\label{eq:3:defUn}
		U_n = \inf\{ m : |Y_m| = n \},
	\end{equation}
	i.e. $U_n$ is the first time the reflected random walk hits $n$. Then  for every $k>0$, for fixed environment $\omega$,
	$\T_k^{r}\overset{d}{=} U_{\xi_k}$. In what follows we investigate how the asymptotic properties of $\xi_k$ affect those of $\T_k^{r}$. To do that, we will utilize the aforementioned
	equality in distribution and hence we first need to describe the asymptotic properties of $U_n$ as $n$ tends to infinity. The proof of the next lemma is omitted, since it follows from a
	standard application of Doob's
	optimal stopping theorem to martingales $Y_n^2-n$, $Y_n^4 - 6nY_n^2 + 3n^2 + 2n$, $ Y_n^6 - 15nY_n^4 + (45n^2+30n)Y_n^2 - (15n^3 + 30n^2 + 16n)$ and $\exp\{ \pm t Y_n\} {\rm cosh}(t)^{-n}$.

 	\begin{lem} \label{lem:m3}
		Let $U_n$, for $n \in \NN$ be given in~\eqref{eq:3:defUn}. We have
		\begin{equation*}
			 \E U_n = n^2, \quad   \E U_n^2  = 5n^4/3 - 2 n^2/3.
		\end{equation*}
 		Moreover, as $n \to \infty$,
 		\begin{equation*}
 			U_n/n^2 \Rightarrow 2\vartheta,
 		\end{equation*}
		for $\vartheta$ defined in \eqref{eq:mm1}. Furthermore the family of random variables $\{ n^{-4}U_n^2 \}_{n \in \NN}$ is uniformly integrable.
	\end{lem}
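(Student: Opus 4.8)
The plan is to derive every assertion from optional stopping applied to the simple random walk $(Y_m)_{m\ge0}$ stopped at $U_n$. I first note that, since $Y$ has $\pm1$ steps and $Y_0=0$, it cannot jump over $\pm n$, so $U_n$ is the first exit time of $Y$ from the interval $(-n,n)$; as $Y$ is recurrent, $U_n<\infty$ almost surely, and on $\{m\le U_n\}$ we have $|Y_m|\le n$. This uniform bound on the stopped trajectory makes all the optional-stopping justifications below routine, via the usual truncation followed by monotone/dominated convergence.

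For the moments I would use the polynomial martingales. Doob's theorem applied to $Y_m^2-m$ gives $\E[m\wedge U_n]=\E[Y_{m\wedge U_n}^2]\le n^2$; letting $m\to\infty$ yields $\E U_n\le n^2<\infty$, and dominated convergence (with dominating variable $n^2+U_n$) upgrades this to $\E U_n=\E[Y_{U_n}^2]=n^2$. A one-line computation using $\E[Y_{m+1}^2\mid Y_0,\dots,Y_m]=Y_m^2+1$ and $\E[Y_{m+1}^4\mid Y_0,\dots,Y_m]=Y_m^4+6Y_m^2+1$ shows that $Y_m^4-6mY_m^2+3m^2+2m$ is a martingale; the same truncation scheme first gives $\E U_n^2<\infty$ and then, evaluating at $U_n$ where $Y_{U_n}^2=n^2$ and $Y_{U_n}^4=n^4$,
\[
n^4-6n^2\,\E U_n+3\,\E U_n^2+2\,\E U_n=0 ,
\]
so that $\E U_n=n^2$ forces $\E U_n^2=5n^4/3-2n^2/3$.

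For the distributional limit I would use the exponential martingale. Since a $\pm1$ step has moment generating function $\cosh t$, the processes $e^{\pm tY_m}\cosh(t)^{-m}$, hence $\cosh(tY_m)\cosh(t)^{-m}$, are martingales for each $t\ge0$; the stopped process is bounded by $\cosh(tn)$, so optional stopping applies and, since $|Y_{U_n}|=n$, yields the exact identity $\E[\cosh(t)^{-U_n}]=1/\cosh(tn)$ for all $t\ge0$. Taking $t=\sqrt s/n$ makes the right-hand side $1/\cosh\sqrt s$. To replace the exponent $\log\cosh(\sqrt s/n)$ by $s/(2n^2)$ I would combine $\log\cosh(\sqrt s/n)=s/(2n^2)+O(n^{-4})$ with the elementary bound $|e^{-a}-e^{-b}|\le|a-b|$, valid for $a,b\ge0$:
\[
\Bigl|\,\E\bigl[e^{-sU_n/(2n^2)}\bigr]-\E\bigl[\cosh(\sqrt s/n)^{-U_n}\bigr]\Bigr|\le\E[U_n]\,\bigl|s/(2n^2)-\log\cosh(\sqrt s/n)\bigr|=n^2\cdot O(n^{-4})\to0 .
\]
Hence $\E[e^{-sU_n/(2n^2)}]\to1/\cosh\sqrt s=\E[e^{-s\vartheta}]$ for every $s>0$, and the continuity theorem for Laplace transforms gives $U_n/(2n^2)\Rightarrow\vartheta$, i.e.\ $U_n/n^2\Rightarrow2\vartheta$.

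Finally, for uniform integrability of $\{n^{-4}U_n^2\}$ I would note that $n^{-4}U_n^2\Rightarrow(2\vartheta)^2$ by the continuous mapping theorem, while the moment identity above gives $\E[n^{-4}U_n^2]=5/3-2/(3n^2)\to5/3$; since $1/\cosh\sqrt s=1-s/2+5s^2/24+\cdots$ we have $\E[(2\vartheta)^2]=5/3<\infty$, and a sequence of nonnegative random variables converging both in distribution and in mean is uniformly integrable. (Alternatively, the sextic martingale $Y_m^6-15mY_m^4+(45m^2+30m)Y_m^2-(15m^3+30m^2+16m)$, handled by the same truncation scheme, gives $\E U_n^3\le Cn^6$, so $\{n^{-4}U_n^2\}$ is bounded in $L^{3/2}$.) The only genuinely delicate points are the repeated bootstrap justifications of optional stopping for the unbounded polynomial martingales and the asymptotic handling of the identity $\E[\cosh(t)^{-U_n}]=1/\cosh(tn)$ near $t=\sqrt s/n$; both are elementary but must be written out carefully.
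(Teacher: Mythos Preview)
Your proof is correct and follows essentially the same route the paper indicates: optional stopping for the martingales $Y_m^2-m$, $Y_m^4-6mY_m^2+3m^2+2m$, and $e^{\pm tY_m}\cosh(t)^{-m}$ to get the moments and the Laplace-transform limit. The only minor variation is that you give a ``convergence in law plus convergence of means'' argument for uniform integrability before mentioning the sextic martingale, whereas the paper points directly to the sextic martingale (your alternative); both are valid, and your Skorokhod/Scheff\'e-style observation that UI depends only on the marginals makes the first argument sound.
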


	The sequence $T^r_{S_n} = \sum_{k=1}^n \T_k^{r}$ is a sum of $\Po$ independent random variables $\{\T_k^r\}$.  Since, by Lemma~\ref{lem:m3},
\begin{equation}\label{eq:tvar}
\var_\omega \T_k^r = \frac 23 \xi_k^4 - \frac 23 \xi_k^2,
\end{equation}
the variance  $\var_\omega T_{S_n}^r$
	behaves asymptotically  as  $(2/3) \sum_{k=1}^n \xi_k^4$, thus obeys a stable limit theorem~\cite[Theorem 3.8.2]{durrett2019probability}. Moreover, we can use precise large deviation results for
	sums of i.i.d. regularly varying random variables~\cite[Theorem 9.1]{Denisov2008} to describe the deviations of $\var_\omega T_{S_n}^r$.  That is for any sequence $\{\alpha_n\}$ that
	tends to infinity,
	 $$
  		\P[\var_\omega T_{S_n}^r \ge \alpha_n a_n^4 ] \sim  (2/3)^{\beta/4} n \alpha_n^{-\beta/4}a_n^{-\beta} \ell(\alpha_n^{1/4}a_n).
  	$$
	We can now use Potter bounds~\cite[Theorem 1.5.6]{bingham:1987:regular} to control $\ell(\alpha_n^{1/4}a_n)$ with $\ell(a_n)$. This in turn yields a large deviation result
	asymptotic on the logarithmic scale.  We summarize this discussion in the following lemma.

	\begin{cor}\label{cor:m1}
  		The sequence $\{ \var_\omega T_{S_n}^r/ {a_n^4}\}_{n\in\N}$ converges in distribution (with respect to $\P$) to some stable random variable $Z$.
  		Moreover for any sequence $\{\alpha_n\}_{n \in \NN}$ that tends to infinity,
  		$$
  			\log\P[\var_\omega T_{S_n}^r \ge \alpha_n a_n^4 ] \sim - \beta\log(\alpha_n)/4.
  		$$
	\end{cor}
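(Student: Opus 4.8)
The plan is to treat the two assertions separately, both relying on the classical theory of sums of i.i.d.\ regularly varying random variables. For the first assertion, I would start from the identity~\eqref{eq:tvar}, which gives
\[
	\var_\omega T_{S_n}^r = \sum_{k=1}^n \var_\omega \T_k^r = \frac{2}{3}\sum_{k=1}^n \xi_k^4 - \frac{2}{3}\sum_{k=1}^n \xi_k^2.
\]
Under~\eqref{eq:2:assumption}, the variables $\xi_k^4$ have a regularly varying tail with index $-\beta/4 \in (-1, -\infty)$\,---\,more precisely $\P[\xi^4 > t] = \P[\xi > t^{1/4}] \sim t^{-\beta/4}\ell(t^{1/4})$, which is regularly varying with index $-\beta/4$. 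Since $\beta/4 < 1$ by the standing assumption $\gamma \in (\beta/4, 1\wedge\beta)$ (which forces $\beta < 4$), the partial sums $\sum_{k=1}^n \xi_k^4$, once divided by a suitable regularly varying scaling sequence, converge in distribution to a one-sided $(\beta/4)$-stable law; see~\cite[Theorem 3.8.2]{durrett2019probability}. The natural scaling here is precisely $a_n^4$, because $n\P[\xi^4 > a_n^4] = n\P[\xi > a_n] \to 1$ by the defining property of $(a_n)$, so $a_n^4$ is (up to asymptotic equivalence) the canonical norming sequence for $\sum \xi_k^4$. The lower-order sum $\sum_{k=1}^n \xi_k^2$ has tail index $-\beta/2$, hence is norming-comparable to $a_n^2 = o(a_n^4)$; by Slutsky it contributes nothing to the limit. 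This yields $\var_\omega T_{S_n}^r/a_n^4 \Rightarrow (2/3) Z'$ for a standard positive $(\beta/4)$-stable $Z'$; set $Z = (2/3)Z'$.

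For the second assertion, the input is the precise large deviation estimate of Denisov, Dieker and Shneer~\cite[Theorem 9.1]{Denisov2008} applied to the i.i.d.\ regularly varying summands $\xi_k^4$: for any sequence $x_n$ growing sufficiently fast relative to the norming sequence (in particular for $x_n = \alpha_n a_n^4$ with $\alpha_n \to \infty$), one has
\[
	\P\Big[\sum_{k=1}^n \xi_k^4 \ge x_n\Big] \sim n\,\P[\xi^4 > x_n] = n\,\P[\xi > x_n^{1/4}].
\]
Substituting $x_n = \alpha_n a_n^4$ and absorbing the $2/3$ factor (and discarding the negligible $\sum \xi_k^2$ term, which only shifts $\alpha_n$ by a vanishing amount) gives
\[
	\P[\var_\omega T_{S_n}^r \ge \alpha_n a_n^4] \sim \Big(\tfrac{2}{3}\Big)^{\beta/4} n\,\alpha_n^{-\beta/4} a_n^{-\beta}\,\ell\big(\alpha_n^{1/4}a_n\big),
\]
which is the displayed asymptotic in the text. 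To pass to the logarithmic scale, take logarithms: $\log\P[\cdots] = -\tfrac{\beta}{4}\log\alpha_n + \log n - \beta\log a_n + \log\ell(\alpha_n^{1/4}a_n) + O(1)$. Since $a_n = n^{1/\beta}\ell_1(n)$, we have $\log n - \beta\log a_n = -\beta\log\ell_1(n) = o(\log n)$, which is $o(\log\alpha_n)$ provided $\alpha_n$ is not too small; but more carefully one should note that $\log\P[\cdots] \to -\infty$ and that the dominant term is $-\tfrac{\beta}{4}\log\alpha_n$ whenever $\alpha_n \to \infty$ at least polynomially, while for slowly growing $\alpha_n$ one needs the Potter bounds~\cite[Theorem 1.5.6]{bingham:1987:regular} to control $\ell(\alpha_n^{1/4}a_n)/\ell(a_n)$ and slow variation of $\ell_1$ to absorb the $n$-dependent terms. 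Dividing by $-\tfrac{\beta}{4}\log\alpha_n$ and checking that all other terms are $o(\log\alpha_n)$ gives the claim.

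The main obstacle I anticipate is the uniformity in the large deviation regime: Theorem~9.1 of~\cite{Denisov2008} requires $x_n$ to lie beyond a threshold of the form (norming sequence) times something growing, and one must verify that $\alpha_n a_n^4$ satisfies this for \emph{every} sequence $\alpha_n \to \infty$, including those growing arbitrarily slowly. This is where the Potter-bound argument enters: one compares $\ell(\alpha_n^{1/4}a_n)$ with $\ell(a_n)$ at the cost of factors $\alpha_n^{\pm\epsilon/4}$, and since $\epsilon$ can be taken arbitrarily small while the leading term $\alpha_n^{-\beta/4}$ is a genuine power, the error is swallowed on the logarithmic scale. A minor additional point is justifying that the $\sum_{k=1}^n \xi_k^2$ correction is negligible both in distribution (clear, since $a_n^2 = o(a_n^4)$) and in the large-deviation asymptotics (its contribution to $\{\var_\omega T_{S_n}^r \ge \alpha_n a_n^4\}$ is a higher-order event because its own large deviations decay at rate governed by $n\P[\xi^2 > \alpha_n a_n^4] = n\P[\xi > \alpha_n^{1/2}a_n^2]$, which is exponentially—in the log-scale—smaller). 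I would keep this bookkeeping brief, as it is routine once the tail indices are identified.
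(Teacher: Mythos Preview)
Your proposal is correct and follows essentially the same approach as the paper: the paper's argument (given in the paragraph immediately preceding the corollary) likewise starts from~\eqref{eq:tvar}, invokes \cite[Theorem~3.8.2]{durrett2019probability} for the stable limit, applies \cite[Theorem~9.1]{Denisov2008} to obtain the asymptotic $\P[\var_\omega T_{S_n}^r \ge \alpha_n a_n^4 ] \sim (2/3)^{\beta/4} n \alpha_n^{-\beta/4}a_n^{-\beta} \ell(\alpha_n^{1/4}a_n)$, and then uses Potter bounds to pass to the logarithmic scale. Your version is simply more explicit about the $\sum \xi_k^2$ correction and the applicability of the large-deviation regime for slowly growing $\alpha_n$, points the paper leaves implicit.
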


\subsection{The sequence $\{T_{S_n}^l\}$.}

   The structure of $\T_k^l$ is more involved. We may express it as a sum of independent copies of $F_k$, which denotes the length of a single excursion to the left from $S_k$, and thus obtain formulae for its quenched expectation and quenched variance.

	\begin{lem}\label{lem:m1}
		The following formulae hold
		\begin{equation}\label{eq:m1}
  		\begin{split}
			\Eo F_k &= 2(\xi_k + W_{k-1}),\\
			\var_\omega F_k &= 8\sum_{j<k} \Pi_{j+1,k-1}\left(\xi_{j+1}W_j^2 + \xi_{j+1}^2W_j + \frac{1}{3}(\xi_{j+1}^3 - \xi_{j+1})\right) \\
			&- 4W_{k-1}^2 + (-14\xi_k + 10) W_{k-1} - 4\xi_k^2 + 4\xi_k
		\end{split}
	\end{equation}
	and
	\begin{equation}\label{eq:m2}
		\begin{split}
			\Eo \T_k^l &= 2\xi_k W_{k-1}, \\
			\var_\omega \T_k^l &= 8 \xi_k \sum_{j<k-1} \Pi_{j+1,k-1}\left(\xi_{j+1}W_j^2 + \xi_{j+1}^2W_j + \frac{1}{3}(\xi_{j+1}^3 - \xi_{j+1})\right) \\
			&+ \xi_k(1-\lambda_{k-1})\Big(\big(\xi_k(1-\lambda_{k-1}) + 2\lambda_{k-1}\big)(2W_{k-1} - \rho_{k-1})^2 \\
			& - 6(\xi_{k-1}-1)\rho_{k-1}W_{k-2} + \rho_{k-1}\Big).
		\end{split}
  		\end{equation}
	\end{lem}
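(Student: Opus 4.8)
The plan is to obtain all four identities from the excursion structure of $X$ at the marked site $S_{k-1}$, read off conditionally on $\omega$, combined with Wald's identity and the Blackwell--Girshick formula for the variance of a random sum. Throughout we fix $\omega$ and work under $\Po$.

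First I would record the relevant structural facts. Run $X$ from $S_{k-1}$ until it reaches $S_k$. By the strong Markov property the pieces of the trajectory between consecutive visits to $S_{k-1}$ are, given $\omega$, mutually independent, each being an excursion away from $S_{k-1}$. An excursion whose first step is $S_{k-1}\to S_{k-1}-1$ (probability $1-\lambda_{k-1}$) stays in $(-\infty,S_{k-1}-1]$ until it returns to $S_{k-1}$; its length has the law of $F_{k-1}$ and depends only on $\omega$ restricted to $(-\infty,S_{k-1}-1]$. An excursion whose first step is $S_{k-1}\to S_{k-1}+1$ (probability $\lambda_{k-1}$) is a simple random walk excursion in $(S_{k-1},S_k)$ which, by the classical gambler's ruin estimate, returns to $S_{k-1}$ with probability $1-1/\xi_k$ and reaches $S_k$ with probability $1/\xi_k$. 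Hence $\T_k^l=\sum_{i=1}^{M_k}F_{k-1}^{(i)}$, where the $F_{k-1}^{(i)}$ are i.i.d.\ copies of $F_{k-1}$ which, given $\omega$, are independent of $M_k$, and $M_k$ has the law of the number of left trials before the first right-and-escaping trial in an i.i.d.\ sequence in which each trial is left with probability $1-\lambda_{k-1}$ and right-and-escaping with probability $\lambda_{k-1}/\xi_k$; in particular $\Eo M_k=\xi_k\rho_{k-1}$ and $\var_\omega M_k$ is an explicit function of $\xi_k$ and $\lambda_{k-1}$. (The inclusion of the entering step, rather than the leaving step, into $\T_k^l$ is exactly what makes the $F_{k-1}^{(i)}$ honest i.i.d.\ blocks.) Likewise a single left excursion of $X$ from $S_k$ decomposes, conditionally on $\omega$, as $1$ plus the first passage of $X$ from $S_k-1$ to $S_k$, which after conditioning on the number of visits to $S_{k-1}$ is an independent sum of simple random walk sojourns in $[S_{k-1},S_k)$ and of $N_k$ independent copies of $F_{k-1}$ with $\Eo N_k=\rho_{k-1}$; the first two moments of these simple random walk sojourn times are obtained, exactly as in Lemma~\ref{lem:m3}, by Doob's optional stopping applied to $Y_n^2-n$ and $Y_n^4-6nY_n^2+3n^2+2n$.

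The expectations then come out immediately. Unrolling the hitting-time expectation through the $\xi_k-1$ unmarked sites of $(S_{k-1},S_k)$, in the spirit of \eqref{eq:4}, gives $\Eo F_k=1+\Eo^{S_k-1}T_{S_k}=2(\xi_k+W_{k-1})$, the first line of \eqref{eq:m1}. For $\T_k^l$, Wald's identity together with the recursion $W_{k-1}=\rho_{k-1}\xi_{k-1}+\rho_{k-1}W_{k-2}$ gives $\Eo\T_k^l=\Eo M_k\,\Eo F_{k-1}=\xi_k\rho_{k-1}\cdot 2(\xi_{k-1}+W_{k-2})=2\xi_k W_{k-1}$; equivalently $\Eo\T_k^l=\Eo\T_k-\Eo\T_k^r=(\xi_k^2+2\xi_k W_{k-1})-\xi_k^2$ by \eqref{eq:4} and $\Eo\T_k^r=\xi_k^2$ (using $\T_k^r\od U_{\xi_k}$ and Lemma~\ref{lem:m3}).

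For the variances I would first establish the second line of \eqref{eq:m1}. Feeding the decomposition of the left excursion from $S_k$ into the Blackwell--Girshick formula, with the appropriate covariance bookkeeping, produces a one-step recursion $\var_\omega F_k=\rho_{k-1}\var_\omega F_{k-1}+R_k$, with $R_k$ an explicit polynomial in $\xi_k$, $\lambda_{k-1}$ and $W_{k-1}$ built from the moments of the simple random walk sojourn times, of $N_k$, and of their covariance, and where one uses $\Eo F_{k-1}=2W_{k-1}/\rho_{k-1}$ (a rewriting of the $W$-recursion). One then verifies by induction, reducing $W_{k-1}$ via $W_{k-1}=\rho_{k-1}(\xi_{k-1}+W_{k-2})$, that the claimed closed form solves this recursion: the $\Pi_{j+1,k-1}$-weighted sum of $\xi_{j+1}W_j^2+\xi_{j+1}^2W_j+\frac{1}{3}(\xi_{j+1}^3-\xi_{j+1})$ is the particular solution generated by iterating, while the polynomial tail $-4W_{k-1}^2+(-14\xi_k+10)W_{k-1}-4\xi_k^2+4\xi_k$ is exactly what absorbs the non-telescoping part of $R_k$. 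The second line of \eqref{eq:m2} then follows from $\var_\omega\T_k^l=\Eo M_k\,\var_\omega F_{k-1}+\var_\omega M_k\,(\Eo F_{k-1})^2$: substituting the formula just proved for $\var_\omega F_{k-1}$ and using $\rho_{k-1}\Pi_{j+1,k-2}=\Pi_{j+1,k-1}$ turns $\Eo M_k\,\var_\omega F_{k-1}$ into $8\xi_k\sum_{j<k-1}\Pi_{j+1,k-1}(\cdots)$ plus lower-order terms, and combining these with $\var_\omega M_k\,(\Eo F_{k-1})^2$, using $2W_{k-1}-\rho_{k-1}=\rho_{k-1}(2\xi_{k-1}+2W_{k-2}-1)$ and $1-\lambda_{k-1}=\lambda_{k-1}\rho_{k-1}$, collapses the remainder into the displayed expression. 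The main obstacle is this final, purely mechanical, layer: computing exactly the second moments of the gambler's ruin sojourn times and of the geometric counts $M_k$ and $N_k$, checking the conditional-independence claims with care, and matching all the numerical constants; conceptually nothing beyond the strong Markov property, Wald's and Blackwell--Girshick's identities, and the polynomial martingales for simple random walk is needed.
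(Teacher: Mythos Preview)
Your approach is correct and matches the paper's in all essentials. The paper also decomposes $\T_k^l$ into i.i.d.\ copies of $F_{k-1}$, though it writes this as a double sum $\sum_{m=0}^{M_1}\sum_{j=1}^{N_m}F_0(j,m)$ (number of right-returns to $S_{k-1}$, then left excursions per visit) rather than your single sum; since a geometric compound of geometrics is geometric, the two yield the same moment formulae. The one substantive difference is that for $\var_\omega F_k$ the paper does not carry out your proposed recursion but instead invokes \cite[Lemma~3]{goldsheid2007simple} directly; your self-contained route via $\var_\omega F_k=\rho_{k-1}\var_\omega F_{k-1}+R_k$ and induction is a valid alternative and, as you note, the remaining work is purely computational.
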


	\begin{proof}
		Since the environment $\{(\rho_k,\xi_k)\}_{k\in\Z}$ is stationary, it is sufficient to calculate all the above formulae for $k=0$ or $k=1$. We first consider $\T^{l}_1$.
		Notice that the particle starting at 0 can return repeatedly to 0 from the right
  		or from the left. By the classical ruin problem, $\P^1_\omega(T_0 > T_{\xi_1}) = 1/\xi_1$. Thus the particle
  		starting at 1 hits the point 0 $M_1$ times before  it reaches $\xi_1$, where $M_1$ is geometrically distributed with parameter $1/\xi_1$ and mean $\xi_1-1$.
  		This is exactly the number of visits to $0$ counted by $\T^r_1$.
  		Between consecutive steps from 0 to 1, let's say between $m$th and $(m+1)$'th step, the particle spends some time
  		in $(-\infty, 0]$. In particular its visits at 0  from the left are exactly those included in $\T^l_1$.
  		Let us denote such an excursion by $G_0(m)$
		and denote by $G_0$ its generic copy. That is, $G_0$ ($G_k$, resp.) is the time the particle spends in $(-\infty,0]$ ($(-\infty,S_{k}]$, resp.) before visiting 1 ($S_k+1$, resp.). Then
		$G_0 \overset{d}{=} T_1 -1$ (or more generally $G_k \overset{d}{=} T_{S_k+1} - T_{S_k}-1$).

		The random variable $G_0$ consists of $N_m$ disjoint excursions in $(-\infty,-1]$, where $N_m$ is the number of jumps from $0$ to $-1$ before the next step to $1$.
		Since the  particle can jump to $-1$ with probability $(1-\lambda_0)$, $N_m$ has geometric distribution with mean $\rho_0$.
		Summarizing, $\T_1^l$ can be decomposed as
		\begin{equation}\label{eq:w1}
			\T_1^{l} = \sum_{m=0}^{ M_1} G_0(m) = \sum_{m=0}^{ M_1} \sum_{j=1}^{N_m} F_0({j,m}),
		\end{equation}
		where $F_0({j,m})$ measures the length of a single left excursion from 0.
		Observe that both $N_m$'s and $F_0({j,m})$'s are i.i.d. under $\Po$. Moreover, the first sum includes $m= 0$, because the process starts at $0$.

		Recall that if $S_N = \sum_{k=1}^N X_i$ for some random variable $N$ and an i.i.d. sequence $\{X_n\}$ independent of $N$, then
		\begin{equation}\label{eq:t11}
			\var S_N = \E N \cdot \var X + \var N \cdot (\E X)^2.
		\end{equation}
		The above formula together with  \eqref{eq:w1} easily entails
  		\begin{equation}\label{eq:4:1}
  			\begin{split}
  			\Eo \T_1^{l} &= \xi_1\rho_0 \Eo F_0,\\
			\var_\omega \T_1^l &= \xi_1\rho_0 \var_\omega F_0
			+   \left(\xi_1^2 \rho_0^2 + \xi_1\rho_0 \right)(\Eo F_0)^2.
  			\end{split}
		\end{equation}
		Since $F_0$ is the time of a single excursion from $0$ that begins with a step left, using the solution to the classical ruin problem in combination with formula~\eqref{eq:4} we get
  		$$
			\Eo F_0 = 1 +  \Eo^{-1}  T_0 = 1 + (\xi_0 - 1) + \frac{1}{\xi_0} \Eo^{S_{-1}} T_{S_0} = 2(\xi_0 + W_{-1}),
		$$
		A formula for quenched variance of crossing times for arbitrary neighbourhood was given in~\cite[Lemma 3]{goldsheid2007simple} and yields \eqref{eq:m1}. Inserting these formulae
		to \eqref{eq:4:1}, using the fact that $\rho_0W_{-1} = W_0 - \xi_0\rho_0$, and finally simplifying the expression leads to~\eqref{eq:m2}.
	\end{proof}

	\begin{lem}\label{lem:var}
		For every $\varepsilon > 0$ and  $\theta \ge 0$,
		$$
			\P\big[ \var_\omega  T_{S_n}^l \ge \varepsilon n^{\theta} a_n^4 \big] \le o(1)/n^{\theta \gamma}, \quad n\to\infty,
		$$
		where $\gamma$ is a parameter satisfying \eqref{eq:2:assumption2}. In particular,
  		$$
  			\frac 1{a_n^4} \var_\omega T_{S_n}^l
  			\overset{\P}{\to} 0.
  		$$
	\end{lem}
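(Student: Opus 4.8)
The plan is to estimate $\var_\omega T^l_{S_n} = \sum_{k=1}^n \var_\omega \mathcal{T}^l_k$ by bounding the tail of a single summand and then applying a union bound, exploiting the fact that $\gamma<1$ so that a subadditivity-type estimate $\P[\sum \zeta_k \geq t] \leq \sum \P[\zeta_k \geq t/\,\cdot\,]$ is not what is needed; rather, since $\gamma\le 1$, one uses $\me[(\sum_k \zeta_k)^\gamma] \le \sum_k \me[\zeta_k^\gamma]$ together with Markov's inequality. So the first step is: express $\var_\omega \mathcal{T}^l_k$ via the explicit formula \eqref{eq:m2} from Lemma~\ref{lem:m1}, and bound it (up to constants) by a sum of terms of the shape $\xi_k \Pi_{j+1,k-1}\big(\xi_{j+1} W_j^2 + \xi_{j+1}^2 W_j + \xi_{j+1}^3\big)$ over $j<k$, plus a boundary term of order $\xi_k^2 W_{k-1}^2 + \xi_k^3 W_{k-1}$ coming from the last line of \eqref{eq:m2}. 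Summing over $k\le n$ and reindexing, $\var_\omega T^l_{S_n}$ is dominated by $n$ times a generic random variable built from the products $\Pi$, the $\xi$'s and the $W$'s.

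The second step is the moment bound. Fix $\gamma\in(\beta/4,1\wedge\beta)$ as in \eqref{eq:2:assumption2}. Using $\gamma\le 1$ and subadditivity of $x\mapsto x^\gamma$, it suffices to bound $\me\big[\big(\xi_k\Pi_{j+1,k-1}(\xi_{j+1}W_j^2+\xi_{j+1}^2W_j+\xi_{j+1}^3)\big)^\gamma\big]$. By independence of $\xi_k$ from $(\Pi_{j+1,k-1},\xi_{j+1},W_j)$, this factors as $\me[\xi^\gamma]\cdot\me\big[\Pi_{j+1,k-1}^\gamma(\cdots)^\gamma\big]$. The key point is that $\me[\rho^{2\gamma}]<1$ gives geometric decay: $\me[\Pi_{j+1,k-1}^\gamma] = (\me[\rho^\gamma])^{k-1-j}$, and since $\me[\rho^\gamma]^2 \le \me[\rho^{2\gamma}]<1$ by Jensen, we have $\me[\rho^\gamma]<1$, so summing over $j<k$ converges. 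One must also check $\me[\xi_{j+1}^{3\gamma}\Pi_{j+1,k-1}^\gamma]<\infty$ and the mixed terms with $W_j^{2\gamma}$, $W_j^\gamma$; here Lemma~\ref{lem:moments} applied with exponent $2\gamma$ (using $\me[\rho^{2\gamma}]<1$, $\me[\rho^{2\gamma}\xi^{2\gamma}]<\infty$, $\me[\xi^{2\gamma}]<\infty$, all of which follow from \eqref{eq:2:assumption2} since $\gamma<1$) gives $\me[W^{2\gamma}]<\infty$, and the term $\xi_{j+1}^{3\gamma}\Pi_{j+1,k-1}^\gamma$ is handled by the second condition $\me[\xi^\gamma\rho^{3\gamma}]<\infty$ absorbed into the product (the worst factor is $\rho_{j+1}^{3\gamma}\xi_{j+1}^\gamma$ next to $\xi_{j+1}^{2\gamma}$, requiring a Hölder split against the remaining geometric factors). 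The upshot is a bound $\me\big[(\var_\omega T^l_{S_n})^\gamma\big] \le C n$ for a finite constant $C$. More precisely, keeping track of the normalisation, $\me\big[(\var_\omega T^l_{S_n}/a_n^4)^\gamma\big] = a_n^{-4\gamma}\,\me[(\var_\omega T^l_{S_n})^\gamma] \le C\, n\, a_n^{-4\gamma}$, and since $a_n = n^{1/\beta}\ell_1(n)$ we get $n a_n^{-4\gamma} = n^{1-4\gamma/\beta}\ell_1(n)^{-4\gamma}$, whose exponent $1-4\gamma/\beta$ is negative because $\gamma>\beta/4$.

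The third step is to conclude. For the first display, by Markov's inequality $\P[\var_\omega T^l_{S_n}\ge \varepsilon n^\theta a_n^4] \le \varepsilon^{-\gamma} n^{-\theta\gamma} a_n^{-4\gamma}\,\me[(\var_\omega T^l_{S_n})^\gamma] \le \varepsilon^{-\gamma} C\, n^{1-\theta\gamma} a_n^{-4\gamma} = o(1)\, n^{-\theta\gamma}$, using $n a_n^{-4\gamma} = n^{1-4\gamma/\beta}\ell_1(n)^{-4\gamma} \to 0$. The in-probability convergence $\var_\omega T^l_{S_n}/a_n^4 \topr 0$ is the case $\theta=0$. The main obstacle I anticipate is purely bookkeeping: extracting from the somewhat unwieldy formula \eqref{eq:m2} the right dominating terms and verifying that each is integrable to the power $\gamma$ under exactly the hypotheses \eqref{eq:2:assumption2} — in particular that the technical second condition $\me[\xi^\gamma\rho^{3\gamma}]<\infty$ is precisely what is needed to control the cubic-in-$\xi$ term $\tfrac13\xi_{j+1}^3$ sitting next to the product $\Pi_{j+1,k-1}$ (which contributes an extra $\rho_{j+1}$) after a Hölder split, and that no term forces a moment condition on $\xi$ or $\rho$ stronger than those assumed. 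Once the dominating terms are isolated, the geometric summability from $\me[\rho^\gamma]<1$ and the moment finiteness from Lemma~\ref{lem:moments} make the rest routine.
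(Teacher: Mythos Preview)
Your proposal is correct and follows essentially the same route as the paper: apply Markov's inequality with the $\gamma$-th moment to the explicit variance formula~\eqref{eq:m2}, use subadditivity of $x\mapsto x^\gamma$ (since $\gamma\le 1$) to reduce to a single term, factor via independence, exploit the geometric decay $\E[\rho^\gamma]<1$ to sum over $j$, and conclude from $n\,a_n^{-4\gamma}\to 0$ (which holds precisely because $\gamma>\beta/4$). The paper in fact treats only two representative summands, $\xi_k\Pi_{j+1,k-1}\xi_{j+1}W_j^2$ and $\xi_k\Pi_{j+1,k-1}\xi_{j+1}^3$, and leaves the remaining ones to the reader exactly as you anticipate in your ``bookkeeping'' remark.
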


	\begin{proof}
  		To prove the lemma  one needs to deal with the formula for the variance \eqref{eq:m2}.
  To avoid long and tedious arguments  we will explain how to estimate  two of the terms, i.e. we will
  prove
  \begin{equation}\label{eq:s1}
   \P\bigg[   \sum_{k=1}^n \xi_k \cdot \sum_{j <  k-1} \Pi_{j+1,k-1}\xi_{j+1}  W_{j}^2  \ge \varepsilon n^{\theta} a_n^4 \bigg] \le o(1)/n^{\theta \gamma}, \quad n\to\infty
  \end{equation}
  and
  \begin{equation}\label{eq:s2}
    \P\bigg[ \sum_{k=1}^n \xi_k \cdot \sum_{j < k-1} \Pi_{j+1,k-1} \xi_{j+1}^3  \ge \varepsilon n^{\theta} a_n^4 \bigg] \le o(1)/n^{\theta \gamma}, \quad n\to\infty.
  \end{equation}
  All the remaining terms can be treated using exactly the same arguments.

Recall $\gamma \in (\beta/4,1\wedge \beta)$ and $\E \rho^{2\gamma}<1$.
The Markov inequality and  independence of
$\xi_k$, $\Pi_{j+2,k-1}$, $\rho_{j+1}\xi_{j+1}$ and $W_{j}$  yield
\begin{equation*}
  \begin{split}
     \P\bigg[     \sum_{k=1}^n \xi_k & \cdot \sum_{j < k-1} \Pi_{j+1,k-1}\xi_{j+1} W_{j}^2  \ge \varepsilon n^{\theta} {a_n^4}  \bigg]
     \le \frac{1}{\varepsilon^\gamma n^{\theta \gamma } a_n^{4\gamma}} \E
     \bigg[     \sum_{k=1}^n \xi_k \cdot \sum_{j < k-1} \Pi_{j+1,k-1}\xi_{j+1} W_{j}^2  \bigg]^\gamma\\
&     \le \frac{1}{\varepsilon^\gamma n^{\theta \gamma } a_n^{4\gamma}}
\sum_{k=1}^n \E \xi^{\gamma}_k \cdot \sum_{j <  k-1} \E \Pi_{j+2,k-1}^\gamma \E [\rho_{j+1}^\gamma \xi_{j+1}^\gamma] \E W_{j}^{2\gamma}
\le \frac{Cn}{\varepsilon^\gamma n^{\theta \gamma } a_n^{4\gamma}} = \frac{o(1)}{n^{\theta \gamma }},
  \end{split}
\end{equation*}
where the last inequality follows from our hypotheses \eqref{eq:2:assumption2} and Lemma \ref{lem:moments}.
This proves \eqref{eq:s1}.
We proceed similarly with the second formula \eqref{eq:s2}:
\begin{equation*}
  \begin{split}
     \P\bigg[     \sum_{k=1}^n \xi_k & \cdot \sum_{j <  k-1} \Pi_{j+1,k-1}   \xi_j^3  \ge \varepsilon n^{\theta }{a_n^4}  \bigg]
     \le \frac{1}{\varepsilon^\gamma a_n^{4\gamma}} \E
     \bigg[     \sum_{k=1}^n \xi_k \cdot \sum_{j <  k-1} \Pi_{j+1,k-1}  \xi_j^3  \bigg]^\gamma\\
&     \le \frac{1}{\varepsilon^\gamma n^{\theta \gamma } a_n^{4\gamma}}
\sum_{k=1}^n \E \xi^{\gamma}_k \cdot \sum_{j < k-1} \E \Pi_{j+2,k-1}^\gamma \E [\rho_{j+1}^\gamma \xi_{j+1}^{3 \gamma}]
\le \frac{Cn}{\varepsilon^\gamma n^{\theta \gamma } a_n^{4\gamma}}  = \frac{o(1)}{n^{\theta \gamma }}.
  \end{split}
\end{equation*}

Invoking the first part of the lemma with $\theta = 0$ we conclude
convergence of  $\var_\omega  T_{S_n}^l / a_n^4$ to 0 in probability.
\end{proof}


\section{Absence of a strong limit}\label{sec:absence}

	Our aim now is to prove Theorem \ref{mthm0} saying that the 'strong limit in distribution' does not
 exist.  For most of the proof we will consider the standard normalization that is $C_n(\omega) = \Eo T_n$, $A_n(\omega) = \sqrt{\var_\omega T_n}$ and  study the normalized sequence
 	\begin{equation}\label{eq:t7}
		\widetilde T_n = \frac{T_n - \E_\omega T_n}{\sqrt{\var_\omega T_n}}.
 	\end{equation}
We will prove that for $\P$-a.e. $\omega$ there exists its subsequence $\{\widetilde T_{n_k(\omega)}\}$ convergent to $2\vartheta - 1$, however on the other
hand as we will see this cannot be the limit of the whole sequence. Finally we will show that there is no other normalization
leading to a nontrivial limit.

Absence of the strong quenched limit follows essentially from the fact that for $\P$-a.e. $\omega$ one can  find
	an infinite subsequence $\{\xi_{n_l}\}_{l\in\N}$ such that the values of $\xi_{n_l+1}$ are exceptionally  large,
	hence $T_{S_{n_l+1}} - T_{S_{n_l}}$, the time the walk needs to move from $S_{n_l}$ to $S_{n_l+1}$, is
	either much bigger or comparable  with $T_{S_{n_l}}$.

	First we need to construct a favourable environment of probability one.  For this purpose
	we consider two increasing sequences  $\{p_n\}$, $\{q_n\}$ diverging to $+\infty$ such that
	\begin{equation}\label{eq:m3}
2 p_n < q_n < p_{n+1}/2,  \qquad p_n/q_n \to 0 \qquad \mbox{and} \qquad
		\frac{a_{q_n}}{a_{2p_n}} \ge n^{\theta}
	\end{equation}
	for  $\theta > \max\{1/(4\gamma), 1/\beta\}$.  	Notice that one may take e.g. $p_n = 2^{2^n}$, $q_n=p_{n+1}/4$.

	We will need to prove that behaviour of the process in the interval $[S_{2p_n}, S_{2q_n}]$ is determined
	by its position after time $T_{2p_n}$ and that its previous values
	up to time $2p_n$ are negligible when looking
	at time $q_n$ and scale $a_{q_n}$.
	The trajectory of the random walk  $X$ 
cannot be divided into independent pieces with respect to $P$, because the process can have large excursions to the left
 and the environment is not homogeneous.
	To remedy that we will censor the left excursions of $X$ that become too large.
	We consider
	a new process, say $\overline{X}=\{\overline X_k\}_{k \in \NN}$. This process essentially behaves as the previous one and evolves in the same environment, with a small difference. Namely
	after $\overline{X}$ reaches $S_{q_n}$ and before it reaches $S_{2q_n}$ we put a barrier at point $S_{p_n}$,  i.e. the process cannot come back below
	$S_{p_n}$.  However this barrier is removed when $\overline X$ hits  ${S_{2q_n}}$. Of course we can couple both processes on the same probability space removing all left excursions from $S_{p_n}$
	after hitting $S_{q_n}$ and before reaching $S_{2q_n}$.

	For any $k$, we  define the random variables $\overline T_k$, $\overline \T_k$, $\overline \T^r_k$, $\overline \T^l_k$
in an obvious way, e.g.
$$
\overline T_k = \inf\{j:\; \overline X_j = k  \}, \quad \overline \T_k =  \overline T_{ S_k} - \overline T_{ S_{k-1}}.
$$
 	Then $\overline \T^r_k = \T^r_k$ for all $k'$s and $\overline \T^l_k = \T^l_k$ for
	$k \notin \bigcup_n (q_n, 2q_n]$.
	Notice that $\T_k - \overline \T_k$ is the time that the process $X$ spends below $S_{p_k}$ after hitting $S_{k-1}$ and before reaching $S_k$.
The next lemma ensures that asymptotic properties of the processes $X$ and $\overline X$ are comparable.

\begin{lem}\label{lem:m5}
For any $\varepsilon\in (0,1)$ and $\P$-a.e. $\omega$ there is $N = N(\omega)$ such that
\begin{equation}\label{eq:t21}
  \sum_{q_n<k\le 2q_n} \E_\omega(\T_k - \overline \T_k) < \varepsilon^n
\end{equation}
and
\begin{equation}\label{eq:t22}
  \sum_{q_n<k\le 2q_n} \var_\omega(\T_k - \overline \T_k) < \varepsilon^n
\end{equation}
for $n>N$.

		Moreover
		$$
			\T_{n} = \overline \T_n \ \mbox{a.s. for large (random) n.}
		$$
\end{lem}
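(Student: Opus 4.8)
The plan is to deduce \eqref{eq:t21}, then \eqref{eq:t22}, then the last assertion, in each case by the Borel--Cantelli lemma applied after a Markov inequality with the exponent $\gamma$ of \eqref{eq:2:assumption2}. Fix $\varepsilon\in(0,1)$. For $q_n<k\le 2q_n$ the difference $\T_k-\overline\T_k$ is exactly the total length of the excursions strictly below $S_{p_n}$ that $X$ performs during the $k$-th leg $(T_{S_{k-1}},T_{S_k}]$; since $\overline\T_k=\T_k$ outside $\bigcup_n(q_n,2q_n]$, summing shows that $\mathcal E_n:=\sum_{q_n<k\le 2q_n}\Eo(\T_k-\overline\T_k)$ is at most $\Eo^{S_{q_n}}$ of the total time $X$ ever spends below $S_{p_n}$. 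Since $X$ is nearest neighbour it reaches $S_{p_n}$ before visiting any site below it, so by the strong Markov property this expectation equals $\Po^{S_{q_n}}[T_{S_{p_n}}<\infty]$ times $\Eo^{S_{p_n}}$ of the total length of excursions below $S_{p_n}$. By \eqref{eq:3}, letting $j\to\infty$, the first factor is $\Pi_{p_n+1,q_n}R_{q_n+1}/R_{p_n+1}$. For the second, started from $S_{p_n}$ the walk performs a geometric number of steps $S_{p_n}\to S_{p_n}-1$ before escaping to $+\infty$: from \eqref{eq:3} one checks that after a step to the right it never returns to $S_{p_n}$ with probability $1/R_{p_n+1}$, so the escape probability per visit is $\lambda_{p_n}/R_{p_n+1}$ and the expected number of such left-steps is $\rho_{p_n}R_{p_n+1}$; each launches an independent below-excursion of mean length $\Eo F_{p_n}=2(\xi_{p_n}+W_{p_n-1})$ (cf.\ the proof of Lemma~\ref{lem:m1}), so Wald's identity gives $2\rho_{p_n}R_{p_n+1}(\xi_{p_n}+W_{p_n-1})$. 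Multiplying, the factors $R_{p_n+1}$ cancel and
\[
\mathcal E_n\ \le\ 2\,\Pi_{p_n+1,q_n}\,\rho_{p_n}\,R_{q_n+1}\,(\xi_{p_n}+W_{p_n-1}).
\]

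Now $\Pi_{p_n+1,q_n}$, $\rho_{p_n}$, $R_{q_n+1}$ and $\xi_{p_n}+W_{p_n-1}$ depend on the environment at indices in $(p_n,q_n]$, at $p_n$, at indices $>q_n$, and at indices $\le p_n$, respectively, and once the coordinate $(\rho_{p_n},\xi_{p_n})$ is kept together these supports are disjoint. Hence, with $(a+b)^\gamma\le a^\gamma+b^\gamma$, $R_{q_n+1}\od R$, $W_{p_n-1}\od W$,
\[
\E[\mathcal E_n^\gamma]\ \le\ 2^\gamma\,\E R^\gamma\,(\E\rho^\gamma)^{q_n-p_n}\bigl(\E[\rho^\gamma\xi^\gamma]+\E\rho^\gamma\,\E W^\gamma\bigr).
\]
Since $\E\rho^{2\gamma}<1$ we have $\E\rho^\gamma\le(\E\rho^{2\gamma})^{1/2}<1$; moreover $\gamma<\beta$ gives $\E\xi^\gamma<\infty$, and $\E[\rho^\gamma\xi^\gamma]\le\E\xi^\gamma+\E[\rho^{3\gamma}\xi^\gamma]<\infty$ by \eqref{eq:2:assumption2} (using $\rho^\gamma\le1+\rho^{3\gamma}$), so Lemma~\ref{lem:moments} with parameter $\gamma$ gives $\E R^\gamma,\E W^\gamma<\infty$. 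Thus $\E[\mathcal E_n^\gamma]\le C(\E\rho^\gamma)^{q_n-p_n}$ with $C$ independent of $n$, and by \eqref{eq:m3} and the sample choice $p_n=2^{2^n}$, $q_n=p_{n+1}/4$ the exponent $q_n-p_n$ grows super-exponentially, so $\sum_n\varepsilon^{-n\gamma}(\E\rho^\gamma)^{q_n-p_n}<\infty$ for every $\varepsilon\in(0,1)$. The Markov inequality $\P[\mathcal E_n\ge\varepsilon^n]\le\varepsilon^{-n\gamma}\E[\mathcal E_n^\gamma]$ together with Borel--Cantelli gives \eqref{eq:t21}.

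For \eqref{eq:t22} one proceeds in the same way. As the pairs $\{(\T_k,\overline\T_k)\}_k$ are $\Po$-independent, $\sum_{q_n<k\le 2q_n}\var_\omega(\T_k-\overline\T_k)=\var_\omega(B_n)\le\Eo[(B_n^{\infty})^2]$, where $B_n^{\infty}$ is the total length of all below-$S_{p_n}$ excursions of $X$ after $T_{S_{q_n}}$. Writing $B_n^{\infty}=\sum_{m=1}^N D_m$ with $N$ the number of steps $S_{p_n}\to S_{p_n}-1$ and $\{D_m\}$ i.i.d.\ excursion lengths independent of $N$, Wald's second-moment identity yields $\Eo[(B_n^{\infty})^2]\le\Eo[N^2]\,\Eo[D^2]$. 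Here $\Eo[N^2]\le\Po^{S_{q_n}}[T_{S_{p_n}}<\infty]\cdot 2(R_{p_n+1}/\lambda_{p_n})^2=2\,\Pi_{p_n+1,q_n}R_{q_n+1}R_{p_n+1}\lambda_{p_n}^{-2}$ (the number of visits to $S_{p_n}$ starting from $S_{p_n}$ is geometric with mean $R_{p_n+1}/\lambda_{p_n}$), while $\Eo[D^2]=\var_\omega F_{p_n}+(\Eo F_{p_n})^2$ is bounded, via the explicit formula of Lemma~\ref{lem:m1}, by a constant times $\sum_{j<p_n}\Pi_{j+1,p_n-1}\xi_{j+1}\bigl(W_j^2+\xi_{j+1}W_j+\xi_{j+1}^2\bigr)+\xi_{p_n}^2+W_{p_n-1}^2+\xi_{p_n}+W_{p_n-1}$. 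Substituting, expanding, and splitting $R_{p_n+1}=R_{p_n+1,q_n}+\Pi_{p_n+1,q_n}R_{q_n+1}$ wherever needed to restore independence, the $\gamma$-th moment of the resulting bound is at most $C(q_n-p_n)\bigl(\max\{\E\rho^\gamma,\E\rho^{2\gamma}\}\bigr)^{q_n-p_n}$ times a finite constant, the finiteness of the ambient moments of $R$, $W$, $\xi$, $\rho$ and $\lambda^{-1}$ following from \eqref{eq:2:assumption2} and Lemma~\ref{lem:moments} exactly as in the proof of Lemma~\ref{lem:var} (decreasing $\gamma$ within $(\beta/4,1\wedge\beta)$ if necessary, which is harmless since \eqref{eq:2:assumption2} is inherited by every smaller admissible $\gamma$ by convexity of $s\mapsto\E\rho^s$ and Jensen's inequality). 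This is again summable against $\varepsilon^{-n\gamma}$, so Borel--Cantelli gives \eqref{eq:t22}.

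Finally, for the last assertion it suffices, by Borel--Cantelli under $\PP$, to show $\sum_n\PP[\T_n\ne\overline\T_n]<\infty$. One has $\T_n\ne\overline\T_n$ only if $n\in(q_m,2q_m]$ for some $m$ and $X$ reaches $S_{p_m}$ before $S_n$ starting from $S_{n-1}$; hence by \eqref{eq:3}, using $t\le t^\gamma$ for $t\in[0,1]$, $R_{p_m+1,n}\ge\xi_{p_m+1}$ and independence,
\[
\PP[\T_n\ne\overline\T_n]\ \le\ \E\bigl[\Pi_{p_m+1,n-1}\,\xi_n/R_{p_m+1,n}\bigr]\ \le\ \E\bigl[\Pi_{p_m+1,n-1}^{\gamma}\,\xi_n^{\gamma}\,\xi_{p_m+1}^{-\gamma}\bigr]\ =\ \E\xi^\gamma\,\E[\rho^\gamma\xi^{-\gamma}]\,(\E\rho^\gamma)^{\,n-p_m-2}.
\]
Summing over $n\in(q_m,2q_m]$ and then over $m$ yields a convergent series (again because $q_m-p_m$ grows so fast), and the claim follows. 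The two genuine difficulties are making the coupling bookkeeping of $\T_k-\overline\T_k$ precise enough to justify the strong-Markov factorisations, and --- $\gamma$ being small and $\xi$ heavy-tailed --- checking that every moment invoked above is finite while retaining the decisive factor $\Pi_{p_n+1,q_n}$; both are handled precisely as in the proof of Lemma~\ref{lem:var}.
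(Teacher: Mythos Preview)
Your argument is correct but takes a different route from the paper's. The paper derives, for each $k\in(q_n,2q_n]$, an explicit decomposition
\[
\T_k-\overline\T_k=\sum_{m=1}^{M_k}\sum_{j=0}^{N_m}F_{p_n}(j,m),
\]
where $M_k$ is the number of visits to $S_{p_n}$ from the right during the $k$-th leg (shown to satisfy $\Eo M_k=\xi_k\Pi_{p_n+1,k-1}$), and from this obtains the closed formulae $\Eo(\T_k-\overline\T_k)=2\xi_k\Pi_{p_n+1,k-1}W_{p_n}$ and an analogous variance expression not involving $R$. It then bounds $\P\big[\sum_k\Eo(\T_k-\overline\T_k)\ge\varepsilon^n\big]$ by $\varepsilon^{-n\gamma}\sum_k\E\big[(2\xi_k\Pi_{p_n+1,k-1}W_{p_n})^\gamma\big]\le C\varepsilon^{-n}(\E\rho^\gamma)^{q_n-p_n}$ and applies Borel--Cantelli; the last assertion is obtained from $\PP[\T_k\ne\overline\T_k]\le\E\big[(\Eo(\T_k-\overline\T_k))^\gamma\big]$.

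You instead bound the entire sum $\sum_k(\T_k-\overline\T_k)$ by the total time $X$ ever spends below $S_{p_n}$ after $T_{S_{q_n}}$ and factor this via the strong Markov property at $T_{S_{p_n}}$; for the last assertion you use the hitting probability from \eqref{eq:3} directly. Your route is more conceptual and sidesteps the computation of the law of $M_k$, yielding \eqref{eq:t21} very cleanly. The price is paid in \eqref{eq:t22}: your second-moment bound picks up the factor $R_{p_n+1}$ (absent from the paper's per-$k$ variance formula), which is not independent of $\Pi_{p_n+1,q_n}$ and forces the split $R_{p_n+1}=R_{p_n+1,q_n}+\Pi_{p_n+1,q_n}R_{q_n+1}$ together with finiteness of $\E R^{2\gamma}$, $\E W^{2\gamma}$. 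These moments are not immediate from \eqref{eq:2:assumption2} at the original $\gamma$, but your reduction to a smaller $\gamma'\in(\beta/4,\beta/2)$ does work (e.g.\ via H\"older, $\E[\rho^{2\gamma'}\xi^{2\gamma'}]\le(\E[\rho^{3\gamma}\xi^{\gamma}])^{a}(\E\xi^{b})^{1-a}<\infty$ for suitable $a,b$), so the argument closes. In short: the paper's explicit formulae make the moment bookkeeping in \eqref{eq:t22} trivial, while your global bound trades that for a shorter derivation of \eqref{eq:t21}.
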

	\begin{proof}
Fix $k \in ({q_n}, {2q_n}]$. To describe the quenched mean and the quenched variance of $\T_k - \overline \T_k = \T^l_k - \overline \T^l_k$
we need to calculate the time the trajectory $X$, after it hits $S_{k-1}$, but before reaching $S_k$, spends below $S_{p_n}$. For this
purpose we proceed as in the proof of Lemma \ref{lem:m1}, that is we decompose

\begin{equation}\label{eq:t23a}
	\T_k - \overline \T_k = \sum_{m=1}^{M_k} \sum_{j=0}^{N_m} F_{p_n} (j,m),
\end{equation}
where $M_k$ denotes the number of times the walk visits $S_{p_n}$ from the right in the time interval $(T_{S_{k-1}}, T_{S_k})$, $N_m$ is the number of consecutive left excursions from $S_{p_n}$ after hitting it from the right, and $F_{p_n}(j,m)$ is the length of the corresponding excursion. Note that $N_m$ is geometrically distributed with mean $\rho_{p_n}$ and variance $\rho_{p_n}(1 + \rho_{p_n})$. Thus, by formulae \eqref{eq:t11} and \eqref{eq:m1},
\begin{equation}\label{eq:sumN}
	\begin{split}
		\Eo \bigg[ \sum_{j=0}^{N_m} F_{p_n}(j,m) \bigg] &= \rho_{p_n} \Eo F_{p_n} = 2W_{p_n}\\
		\var_\omega \bigg[ \sum_{j=0}^{N_m} F_{p_n}(j,m) \bigg] &= \rho_{p_n} \var_\omega F_{p_n} + \rho_{p_n} (1+\rho_{p_n}) \left( \Eo F_{p_n} \right)^2.
	\end{split}
\end{equation}
Next, observe that for any $m > 0$, $\Po \left[M_k = m\right] = r s^{m-1} (1-s)
$, where
$$
r = \Po^{S_{k-1}} \left[ T_{S_{p_n}} < T_{S_k} \right]
$$
and, invoking once again the gambler's ruin problem,
$$
		s = \Po^{S_{p_n} + 1} \left[ T_{S_{p_n}} < T_{S_k} \right]
		= 1 - \frac{1}{\xi_{p_n+1}} \Po^{S_{p_n + 1}}\left[ T_{S_{p_n}} > T_{S_k}\right].
$$
We may easily calculate the mean and variance of $M_k$ and use the formulae \eqref{eq:3} to express them in terms of the environment. We get, after simplifying,
\begin{equation} \label{eq:M_k}
\begin{split}
	\Eo M_k &= \frac{r}{1-s} = \xi_k \Pi_{p_n + 1, k-1}, \\
	\var_\omega M_k &= \frac{r(1+s-r)}{(1-s)^2} = \xi_k\Pi_{p_n+1,k-1}\left(2R_{p_n+1,k-1} + \xi_k\Pi_{p_n + 1, k-1} - 1 \right).
\end{split}
\end{equation}
Therefore, by \eqref{eq:1}, \eqref{eq:sumN} and \eqref{eq:M_k},
\begin{equation}\label{eq:t56a}
	\begin{split}
		\Eo \left[ \T_k - \overline \T_k \right] & = 2 \xi_k \Pi_{p_n+1, k-1} W_{p_n}, \\
		\var_\omega \left[ \T_k - \overline \T_k \right] & =
		\xi_k \Pi_{p_n, k-1} \var_\omega F_{p_n} \\
		& + \xi_k \Pi_{p_n, k-1} \left(\Eo F_{p_n}\right)^2 \left( 1
		+  2\sum_{j=p_n+1}^{k-1} \xi_j \Pi_{p_n,j-1} + \xi_k\Pi_{p_n,k-1}\right) 
.
	\end{split}
\end{equation}

Now, we are ready to prove \eqref{eq:t21}. We have
$$  \P\bigg[ \sum_{q_n<k\le 2q_n} \E_\omega(\T_k - \overline \T_k) \ge \varepsilon^n \bigg]
\le \varepsilon^{-n} \sum_{q_n < k \le 2q_n} \E\big[ 2\xi_k \Pi_{p_n+1, k-1}W_{p_n}  \big]^\gamma
\le C \varepsilon^{-n} (\E \rho^\gamma)^{q_n-p_n},$$
where $\gamma \in (0,1)$ is a small constant such that $\E \rho^\gamma < 1$, $\E \xi^\gamma <\infty$ and $\E W^\gamma <\infty$ (see
\eqref{eq:2:assumption2} and Lemma \ref{lem:moments}).
Then, by the Borel-Cantelli lemma
$$
\P\bigg[ \sum_{q_n<k\le 2q_n} \E_\omega(\T_k - \overline \T_k) \ge \varepsilon^n \ \ \mbox{ i.o. } \bigg] = 0,
$$
which gives \eqref{eq:t21}. Formula \eqref{eq:t22} can be proved applying essentially the same argument. One needs
to compute, combining \eqref{eq:m1} with \eqref{eq:t56a}, the precise expression for the variance and
next, arguing as above and considering each of the summands separately (as in the proof of Lemma \ref{lem:m1}),
one can deduce \eqref{eq:t22}. We skip the details.

Finally we write
		\begin{equation*}
			\PP[\T_{k} \neq \overline \T_{k}] = \E \left[ \Po [\T_{k} - \overline \T_{k} \geq 1]  \right] \leq \E \left[ \Eo (\T_{k} - \overline \T_{k})    \right]
			\leq C   \big(\E \rho^\gamma\big)^{ k - p_n}
		\end{equation*}
		to infer our final claim by yet another appeal to the Borel-Cantelli lemma.
	\end{proof}

	The advantage of introducing a new process $ \overline X$ is that it behaves similarly to
 $X$  and from the point of view of limit theorems this change is indistinguishable. However,   here one can indicate  independent pieces:
	$\{\overline X_k\}_{k\in ( \overline T_{q_n}, \overline T_{2q_n}] }$ are $\P$-independent.

	Now we are ready to describe the required properties of the environment. The sets below depend on several parameters.
	Given $d < D$ and  $b< B$, and $\varepsilon > 0$ let
	
  	\begin{multline*}
		U_n(d,D,b,B,\varepsilon)  = \bigg\{\mbox{there exists } k\in (q_n, 2q_n] \mbox{ such that }
			\frac{ \var_\omega(\overline T^r_{ S_k} - \overline T^r_{ S_{2p_n}})}{a^4_{k+1}} \in (d,D), \\
			\frac{  \var_{\omega} (\overline T^l_{ S_k} - \overline T^l_{ S_{2p_n}})}{a^4_{k+1}} \le \varepsilon,\quad
			\frac{ \var_{\omega} \overline G_{k} + (\E_{\omega} \overline G_{k})^2  }{ a_{k+1}}  \le \varepsilon, \quad
			\quad  \frac{\xi^4_{k+1}}{a^4_{k+1}}\in (b,B) \bigg\},
      \end{multline*}
      where  $\overline G_k$ is the length of the left excursion of  $\overline X$  from $S_k$ before hitting $S_{k}+1$.
	Of course $\E_\omega \overline G_k \le \E_\omega G_k$ and $\var_\omega \overline G_k \le \E_\omega G_k^2$.
	We want to consider environments which belong to infinitely many sets $U_n$. However, given $\omega$, we want to have some freedom of choosing all the parameters.
	The lemma below justifies that the measure of these environments is one.

	\begin{lem}\label{lem:dom}
		Assume that conditions \eqref{eq:2:assumption} and \eqref{eq:2:assumption2} are satisfied. Then the event
		$$
			\mathcal{U}= \bigcap\left\{  \limsup_{n} U_n (d,D,b,B,\varepsilon)  \: : \: d,D,b,B \in {\mathbb Q}^+, \: d< D, b< B, \: b > 3\cdot 2^{4/\beta} D, \: \varepsilon > 0 \right\}
		$$
		has probability one.
		
	\end{lem}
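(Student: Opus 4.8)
The plan is to show that for each fixed admissible tuple of rational parameters $(d,D,b,B,\varepsilon)$, the event $\limsup_n U_n(d,D,b,B,\varepsilon)$ has probability one; since $\mathcal{U}$ is a countable intersection of such events, the conclusion follows. The key observation is that by construction $2p_n < q_n < p_{n+1}/2$, so the blocks of the environment indexed by $k \in (q_n, 2q_n]$ are built from the variables $(\xi_j, \lambda_j)$ with $j$ in ranges that, across different $n$, are disjoint; moreover, because of the barrier at $S_{p_n}$, the quantities $\overline T^r_{S_k} - \overline T^r_{S_{2p_n}}$, $\overline T^l_{S_k} - \overline T^l_{S_{2p_n}}$, $\overline G_k$ restricted to the window $(2p_n, 2q_n]$ depend only on the environment variables with indices in that window. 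Hence the events $\{U_n \text{ occurs}\}$ are \emph{independent} across $n$ (or at least across a subsequence), and a Borel--Cantelli argument of the second kind applies: it suffices to prove $\sum_n \P[U_n] = \infty$, in fact $\liminf_n \P[U_n] > 0$.

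So the bulk of the work is to show $\P[U_n(d,D,b,B,\varepsilon)] \geq c > 0$ uniformly in $n$. First I would isolate the driving event $\{\xi_{k+1}^4/a_{k+1}^4 \in (b,B)\}$ for a single $k$: since $a_n = n^{1/\beta}\ell_1(n)$ and $\xi$ is regularly varying with index $-\beta$, one has $\P[\xi_{k+1}^4 \in (b\,a_{k+1}^4, B\,a_{k+1}^4)] \sim (b^{-\beta/4} - B^{-\beta/4})/(k+1)$, so over the window of length $q_n$ (which is comparable to $q_n$, much larger than $p_n$) the expected number of such ``large gap'' indices $k$ is bounded below by a positive constant times $\log(q_n/p_n) \to \infty$, hence at least one occurs with probability tending to $1$ by a second-moment or Poisson-approximation estimate for these rare, roughly independent events. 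Then, on the (overwhelmingly likely) event that such a $k$ exists, I need to check the remaining three conditions hold simultaneously with probability bounded away from zero given that a large gap sits at $k+1$: the condition $\var_\omega(\overline T^r_{S_k} - \overline T^r_{S_{2p_n}})/a_{k+1}^4 \in (d,D)$ follows from Corollary~\ref{cor:m1} applied at the appropriate index (note $\var_\omega \overline T^r_{S_k} = \var_\omega T^r_{S_k}$ exactly, and the stable limit $Z$ has full support on $(0,\infty)$, so any interval $(d,D)$ is hit with positive probability), while $\var_\omega(\overline T^l_{S_k} - \overline T^l_{S_{2p_n}})/a_{k+1}^4 \leq \varepsilon$ and the bound on $\var_\omega \overline G_k + (\E_\omega \overline G_k)^2$ both follow from Lemma~\ref{lem:var} and Lemma~\ref{lem:m1} together with the estimate $a_{2p_n}^4 = o(a_{k+1}^4)$ (this is exactly where the third condition in~\eqref{eq:m3}, $a_{q_n}/a_{2p_n} \geq n^\theta$, is used) — these are high-probability events, costing only a small additive error.

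There is a subtlety in making the events at the single index $k$ and the window-wide events truly independent of the large-gap location and of each other; I would handle it by conditioning on the position and value of the first large gap in the window, and using that everything to its left (the $\overline T^r$ and $\overline T^l$ contributions up to $S_k$) involves a \emph{different} block of i.i.d.\ environment variables than $\xi_{k+1}$ itself and the $\overline G_k$ excursion. Concretely: partition $(2p_n, 2q_n]$, look for the first index $k$ with $\xi_{k+1}^4/a_{k+1}^4 \in (b,B)$; given $k$, the variables $(\xi_j,\lambda_j)_{2p_n < j \leq k}$ are independent of $\xi_{k+1}$, so conditionally on there being such a $k$ the remaining conditions are governed by the unconditional laws appearing in Corollary~\ref{cor:m1}, Lemma~\ref{lem:var}, Lemma~\ref{lem:m1}.

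The main obstacle I anticipate is precisely this bookkeeping of conditional independence: one must be careful that the ``reflecting-barrier'' truncation defining $\overline X$ does not introduce dependence between the $\overline T^r$, $\overline T^l$ pieces on $(2p_n, S_k]$ and the gap $\xi_{k+1}$ or excursion $\overline G_k$, and that the stable limit $Z$ in Corollary~\ref{cor:m1} genuinely has a density positive on all of $(0,\infty)$ (true for a positive $\beta/4$-stable law with $\beta/4 < 1$), so that the interval $(d,D)$ is charged with positive probability uniformly in the shifting index $k$ as $n \to \infty$. Once the single-$k$ conditional probability is bounded below by a positive constant $c_0$ and the number of candidate indices grows without bound, the first-occurrence decomposition gives $\P[U_n] \geq c_0 - o(1) > 0$, and the independence across $n$ closes the Borel--Cantelli argument to conclude $\P[\mathcal{U}] = 1$.
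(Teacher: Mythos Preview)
Your skeleton matches the paper's exactly: reduce to fixed rational parameters by countability, observe that $U_n$ depends only on $(\xi_j,\lambda_j)$ with $j\in[p_n,2q_n]$ so the events $U_n$ are independent across $n$, and close with the second Borel--Cantelli lemma after showing $\liminf_n\P[U_n]>0$. The gap is entirely in your lower bound for $\P[U_n]$.

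First, an arithmetic slip: the window for $k$ is $(q_n,2q_n]$, so the expected number of large-gap indices is $\sum_{q_n<k\le 2q_n}\P[\xi_{k+1}^4/a_{k+1}^4\in(b,B)]\sim (b^{-\beta/4}-B^{-\beta/4})\log 2$, a bounded positive constant, not $c\log(q_n/p_n)\to\infty$. Hence the claim that a large gap occurs with probability tending to~$1$ is false, and ``the number of candidate indices grows without bound'' fails. Second, your first-occurrence decomposition does not deliver the independence you assert: conditioning on ``first large gap at $k$'' imposes $\xi_{j+1}^4/a_{j+1}^4\notin(b,B)$ for all $q_n<j<k$, and these very $\xi_j$'s sit inside $\var_\omega(\overline T^r_{S_k}-\overline T^r_{S_{2p_n}})=(2/3)\sum_{2p_n<j\le k}(\xi_j^4-\xi_j^2)$, so the conditional law of $V_k^1$ is not the one from Corollary~\ref{cor:m1}. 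You would need an additional decoupling or second-moment argument, which you do not supply.

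The paper avoids both problems by using the constraint $b>3\cdot 2^{4/\beta}D$, which you list but never exploit. It forces the events $A_k=V_k^1\cap V_k^2\cap V_k^3\cap V_{k+1}^4$ to be \emph{pairwise disjoint} in $k$: once $\xi_{k+1}^4\ge b\,a_{k+1}^4$, then for every later $j\in(k,2q_n]$ one has $\var_\omega(\overline T^r_{S_j}-\overline T^r_{S_{2p_n}})/a_{j+1}^4\ge (2/3)\xi_{k+1}^4/a_{2q_n}^4\ge (2/3)b\,a_{q_n}^4/a_{2q_n}^4>D$, so $V_j^1$ is violated. Disjointness turns the union into a sum; since $V_{k+1}^4$ depends only on $\xi_{k+1}$ while $V_k^1\cap V_k^2\cap V_k^3$ depends on indices $\le k$, these factors are genuinely independent, and one reads off $\P[U_n]=\sum_k\P[V_k^1\cap V_k^2\cap V_k^3]\,\P[V_{k+1}^4]\gtrsim(\delta/2)\sum_{q_n<k\le 2q_n}\delta'/k\sim(\delta\delta'/2)\log 2>0$ directly, with no conditioning on first occurrence at all.
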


	\begin{proof}
		Since in the above formula the intersections are essentially over a countable set of parameters (one can obviously restrict to the rational parameter $\varepsilon$),
		it is sufficient to prove that for fixed parameters  $d < D$, $b< B$ such that $b > 3\cdot 2^{5/\beta-1} D$ and $\varepsilon > 0$,
		$$
			\P\big[ \limsup_n U_n \big] = 1,
		$$
		for $U_n = U_n (d,D,b,B,\varepsilon)$. Observe that the events $\{U_n\}_{n\in\N}$ are independent, because $U_n$ depends only
		on $\{\omega_j\}_{j\in [p_n, 2q_n]}$ and thanks to \eqref{eq:m3} the sets  $\{[p_n, 2q_n]\}_{n\in \N}$ are pairwise disjoint. Thus, invoking the Borel-Cantelli Lemma,
		it is sufficient to prove that there is $\delta_0>0$ such that for large indices $n$,
\begin{equation}\label{eq:t1}
\P[U_n] > \delta_0.
\end{equation}

		We need to estimate probabilities of all the events which appear in the definition of $U_n$. Denote
		\begin{equation}\label{eq:m10}
 		\begin{split}
    			V_k^1 & = \left\{  \var_\omega(\overline T^r_{S_k} - \overline T^r_{S_{2p_n}})/{a^4_{k+1}} \in (d,D) \right\},
     \quad
    			V_k^2 = \left\{ \var_{\omega} (\overline T^l_{S_k} - \overline T^l_{S_{2p_n}}) / a^4_{k+1} \le \varepsilon \right\}, \\
			V_k^3  &= \left\{     \big(\var_{\omega} \overline G_{k} +   (\E_{\omega} \overline G_{k})^2\big)/ a_{k+1} \le \varepsilon \right\},
   		\quad    V^4_{k+1}  = \left\{\xi^4_{k+1}/a^4_{k+1}\in (b,B) \right\}.
  		\end{split}
  		\end{equation}
 		To estimate the probability of $V_k^1$ notice that thanks to \eqref{eq:m3} we have $a_{k-2p_n}/a_{k+1} \to 1$
 for any $k\in (q_n, 2q_n]$,
 therefore
		\begin{equation*}
			\P[V_k^1] = \P \left[\frac{ 2/3\sum_{2p_n < j \le k } (\xi^4_k - \xi_k^2)}{a_{k - 2p_n}^4} \cdot \frac{a_{k - 2p_n}^4}{a^4_{k+1}} \in (d,D) \right] \xrightarrow{n\to\infty} \delta \in (0,1),
		\end{equation*}
		by Lemma \ref{lem:m3}. Recall that the second claim of Lemma \ref{lem:var} means exactly that $\P[V_k^2] \to 1$.
		Thirdly, recalling that $\var_\omega \overline G_k +  (\E_{\omega} \overline G_{k})^2  \le \E_\omega G_k^2 +  (\E_{\omega} G_{k})^2$, which is a stationary sequence, we obtain
		$(\var_\omega \overline G_k + (\E_{\omega} \overline G_{k})^2 )/ a_k\overset{\P}{\to} 0$, i.e. $\P[V_k^3] \to 1$.
		Finally, observe that thanks to our choice of parameters the events defining $U_n$ are disjoint for different values of $k$'s.
		Indeed, if $k,j\in (q_n, 2q_n]$, $k < j$ and $\xi_{k+1}^4 \ge b a_{k+1}^4$, then $V^4_{k+1}\cap V_j^1 = \emptyset$,
because by Lemma \ref{lem:m3}, for large $n$, we have
  $$
\frac{\var_{\omega} (\overline T^r_{S_j} - \overline T^r_{S_{2p_n}}) }{a^4_{j}} \sim   \frac{\frac 23 \sum_{i=2 p_n}^j \xi_i^4}{a_j^4} \ge
\frac 23\;  \frac{ \xi_{k+1}^4}{a_{2q_n}^4} \ge
\frac 23\;  \frac{ b a_{q_n}^4}{a_{2q_n}^4} \ge  \frac{b}{ 3\cdot 2^{ 4/\beta}} > D.
  $$
%
  		Summarizing, for some $\delta'>0$ and large $n$
		\begin{align*}
			\P[U_n]  &=  \P\bigg[ \bigcup_{q_n < k \le 2q_n} V_k^1 \cap V_k^2\cap V_k^3  \cap  V^4_{k+1}  \bigg]
			 = \sum_{q_n < k \le 2 q_n }\P\left[ V_k^1 \cap V_k^2 \cap V_k^3  \right] \P \left[     V^4_{k+1}  \right] \\
			&\ge \sum_{q_n < k \le 2 q_n } \frac{\delta}{2}  \cdot \P\left[  \xi_{k+1} / a_{k+1} \in (b,B) \right]
			\ge \frac{\delta \delta'}{2}  \sum_{q_n < k \le 2 q_n } \frac 1k \sim  \frac{\delta \delta'\log 2}{2}.
		\end{align*}
		In conclusion, the probabilities of $U_n$ are bounded from below, which  entails \eqref{eq:t1} and
completes the proof.
	\end{proof}

	\begin{proof}[Proof of Theorem \ref{mthm0}]
In view of  Lemma \ref{lem:var} and our hypothesis \eqref{eq:m3}, the Borel-Cantelli lemma yields
$$
 		\P\big[\forall \varepsilon >0\  \var_\omega T_{S_{2p_n}} \ge a_{q_n}^4 \varepsilon \ \ \mbox{i.o.}\big] = 0.
$$ Therefore, invoking  Lemma~\ref{lem:dom}  the set
		$$
			\mathcal{U} \cap  \big\{\forall \varepsilon >0\   \var_\omega T_{S_{2p_n}} \ge a^4_{q_n} \varepsilon \ \mbox{i.o.} \big\}^c
		$$
		has probability $1$. From now we fix $\omega$ from the event above which also satisfies the claim of Lemma~\ref{lem:m5}.
		Assume that, given~$\omega$,
		\begin{equation}\label{eq:m6}
  		\widetilde T_n=	\frac{T_n - \E_\omega T_n}{\sqrt{\var_\omega T_n}} \Rightarrow Y_\omega \qquad n\to\infty,
  		\end{equation}
		for some random variable $Y_\omega$.
		We fix  parameters  $d < D$, $b< B$ such that $b >  3\cdot 2^{5/\beta-1} D$ and $\varepsilon > 0$.

		Take two sequences $\{n_m\}_{m\in\N}$ and $k_{m} \in (q_{n_m},q_{2n_m}]$ such that
		$$
			\omega \in V^1_{k_{m}}\cap V^2_{k_{m}}\cap V^3_{k_{m}} \cap V^4_{k_{m}+1},
		$$
		where all the sets were defined in \eqref{eq:m10}. 
We can additionally assume (removing a finite number of elements of the sequence if needed),
		that for all indices $m$
		\begin{equation}\label{eq:m11}
\var_\omega T_{S_{2p_{n_m}}} <  a^4_{k_{m}} \varepsilon.
		\end{equation}

	 	Lemma \ref{lem:m5} says that, given $\omega$, the difference $(T_n - \E_\omega T_n ) - (\overline T_n - \E_\omega \overline T_n) $
remains a.s. bounded and $\var_\omega T_n / \var_\omega \overline T_n $ converges to 1, hence
 \eqref{eq:m6} yields
		\begin{equation}\label{eq:mm3}
 	 		\frac{\overline T_{n} - \E_\omega \overline T_n}{\sqrt{\var_\omega \overline T_n}} \Rightarrow Y_{\omega} \qquad n\to \infty.
	  	\end{equation}
		Consider the following decomposition,
		\begin{equation}\label{eq:m7}
			\frac{\overline T_{S_{k_m+1}} - \E_\omega \overline T_{S_{k_m + 1}}} {\sqrt{\var_\omega \overline T_{S_{k_m+1}}}}
				= v_{m} \cdot V_m +w_m \cdot W_m + Z_m,
		\end{equation}
		where
		\begin{align*}
			&V_m = \frac{\overline T_{S_{k_{m}}} - \E_\omega \overline T_{S_{k_{m}}}}{\sqrt{\var_\omega \overline T_{S_{k_m}}}},
			&v_{m} = \frac{\sqrt{\var_\omega \overline T_{S_{k_m}}} }{\sqrt{\var_\omega \overline T_{S_{k_m+1}}}},\\
			& W_m = \frac{\overline \T^r_{k_m+1} - \E_\omega \overline \T^r_{k_m+1}}{\xi_{k_m+1}^2},	
			&w_{m} = \frac{   \xi_{k_m+1}^2}{\sqrt{\var_\omega \overline T_{S_{k_m+1}}}},\\
			&Z_m = \frac{\overline \T^l_{k_m+1} - \E_\omega \overline \T^l_{k_m+1}}{\sqrt{\var_\omega  \overline T_{S_{k_m+1}}}}.
		\end{align*}
		Random variables $V_m$ and $(W_m,Z_m)$ are $\P_\omega$-independent. By \eqref{eq:mm3}, $V_m$ converges in distribution to $Y_\omega$, whereas $W_m$, by Lemma \ref{lem:m3}, converges to
		$2\vartheta - 1$. Therefore we need to understand the behaviour of both deterministic (given $\omega$) sequences $\{v_m\}_{m\in \N}$, $\{w_m\}_{m\in \N}$ and
of the sequence of random variables $\{Z_m\}$. For this purpose we need to understand behaviour of the variances which appear in the above formulae.
Note first that on the considered event, recalling \eqref{eq:t11}, we have
		\begin{equation}\label{eq:t2}
			\var_\omega \overline \T^l_{k_m+1} \le  \xi_{k_m+1}\var_\omega \overline G_{k_m} + \xi^2_{k_m+1} (\E_\omega \overline G_{k_m})^2 \le\varepsilon  a_{k_m+1}^3 { B^{1/2}. }
		\end{equation}
Applying the Schwartz inequality and the well-known inequality $2ab\le a^2/C + C b^2$, for any $n$ and arbitrary large constant $C$, we can easily prove
$$
-\var_\omega \overline \T^r_n/C + (1- C) \var_\omega  \overline \T^l_n
\le
\var_\omega \overline \T_n - \var_\omega \overline \T^r_n \le \var_\omega \overline \T^r_n/C + (1+C) \var_\omega \overline \T^l_n.
$$
Combining the above inequalities with \eqref{eq:m11} and the definition of $U_n$, we have
$$
\bigg[ \varepsilon(1-C) + \bigg(1 - \frac 1C\bigg) d   \bigg] a_{k_m+1}^4 \le \var_\omega \overline T_{S_{k_m}}
\le \bigg[ \varepsilon + \varepsilon(1+C) + \bigg(1 + \frac 1C\bigg) D   \bigg] a_{k_m+1}^4
$$
and
$$
\bigg[ o(1) (1-C) + \bigg(1 - \frac 1C\bigg) b   \bigg] a_{k_m+1}^4 \le \var_\omega \overline \T_{{k_m +1}}
 \le \bigg[  o(1) (1+C) + \bigg(1 + \frac 1C\bigg) B   \bigg] a_{k_m+1}^4.
$$
The above inequality ensures that $\var_\omega \overline T_{ S_{k_m+1}}$ is of the order $a^4_{k_m+1}$.
For arbitrary small $\delta>0$, choosing appropriate parameters $\varepsilon, C$  and large indices $k_m$
\begin{equation}\label{eq:m20}
(1-\delta) \frac{d}{D+B} \le   v_{m}^2 \le  (1+\delta) \cdot \frac{D}{d+b}
\end{equation}
and
\begin{equation}\label{eq:m21}
(1-\delta) \frac{b}{D+B} \le   w_{m}^2 \le  (1+\delta) \cdot \frac{B}{d+b}
\end{equation}
We can pass with $\delta$ to 0 and assume that the parameters satisfy
\begin{equation}\label{eq:m20t}
 \frac{d}{D+B} \le \liminf_{n\to\infty}  v_{m}^2 \le\limsup_{n\to\infty}  v_{m}^2 \le  \frac{D}{d+b}
\end{equation}
and
\begin{equation}\label{eq:m21t}
 \frac{b}{D+B} \le  \liminf_{n\to\infty} w_{m}^2 \le\limsup_{n\to\infty} w_{m}^2 \le \frac{B}{d+b}.
\end{equation}

Observe		that  for any $\eta > 0$, using the Chebyshev inequality and \eqref{eq:t2}, we have:
		\begin{multline*}
			\P_\omega [|Z_m| > \eta] =
			\P\left[ \left|  \overline \T^l_{k_m+1} - \E_\omega \overline \T^l_{k_m+1}\right| > \eta \sqrt{\var_\omega  \overline T_{S_{k_m+1}} }\; \right]
			\le \frac{ C a^3_{k_m+1}}{\eta^2 \var_\omega  \overline T_{S_{k_m+1}}}  \to 0.
		\end{multline*}
		This proves
		\begin{equation}\label{eq:m12}
			Z_m \overset{\Po}{\to} 0, \quad m\to \infty.
		\end{equation}

One can easily see that for any fixed $d$ and $D$ one can construct  sequences $\{b_m\}, \{B_m\}, \{k_m\}$ such that $b_m,B_m\to \infty$,
 $b_m/B_m\to 1$
and inequalities \eqref{eq:m20t} and $\eqref{eq:m21t}$ hold. Then $v_{m}\to 0$ and $w_{m} \to 1$. Since 
the sequence $\{V_m\}$ is tight, we have
$$
			\frac{\overline T_{S_{k_m+1}} - \E_\omega \overline T_{S_{k_m + 1}}} {{\sqrt{\var_\omega \overline T_{S_{k_m+1}}}}}
				= v_{m} \cdot V_m +w_m \cdot W_m + Z_m \Rightarrow 2\vartheta-1.
$$ So, if the limits \eqref{eq:m6}, \eqref{eq:mm3} exist, both must be equal to $Y_\omega = 2\vartheta-1$.

Next, fixing all the parameters $b,B,d,D$ observe that
		both sequences $\{v_{m}\}$, $\{w_m\}$ are bounded, therefore we can assume, possibly choosing their subsequences,
		that they are convergent to some $v$ and $w$, respectively.
		Since the sequences of random variables $\{V_m\}$ and $\{W_m\}$ are independent,
 we conclude
		\begin{equation}\label{eq:t9}
			2\vartheta-1 \overset{d}{=} Y_\omega \overset{d}{=} v (2\vartheta_v-1) + w (2\vartheta_w-1),
		\end{equation}
		where $\vartheta_v$, $\vartheta_w$ are independent. However this equation cannot be satisfied e.g. by \eqref{eq:mm1}. That leads
		to a contradiction and proves that the limit \eqref{eq:m6} cannot exist.

\medskip

Summarizing, we have proved up to now that the sequence $\{\widetilde T_n\}$ defined in \eqref{eq:t7} cannot converge in distribution.
Nevertheless, it still can happen that different normalization leads to a convergent sequence and we need to exclude this possibility.
Let us consider another normalization $\widehat{T}_n = \widetilde{T}_n/A_n + C_n$ for some sequences $\{A_n\}$ and $\{C_n\}$.
Let us also recall that we already know that if the limit exists, it must be equal to $2\vartheta - 1$.

Observe first that $A_n$ must be bounded. Indeed, assume that there exists its subsequence $\{A_{n_k}\}$ converging to $+\infty$.
Then, since $\{\widetilde{T}_n\}$ is tight, we have $\widetilde{T}_{n_k}/A_{n_k} \overset{\P}{\to} 0$ and thus the sequence $\{C_{n_k}\}$
must converge to some limit $C$ and finally $\widehat{T}_{n_k} \Rightarrow C$, which is a trivial limit.

Next we fix parameters $b,B,d,D$ and consider the subsequence $\{k_m\}$ satisfying all the above requirements.
If the sequence $\{A_n\}$ contains a subsequence $\{A_{k_m}\}$ convergent to a positive constant $A$, then again the
sequence $\{C_{k_m}\}$ must converge to some $C$. Repeating the above calculations we are led once more to equation \eqref{eq:t9},
which leads us to a contradiction.

Thus, the sequence $\{A_{k_m}\}$ must converge to 0. However in this case, since $w_nW_m + Z_m \Rightarrow w (2\vartheta-1)$, the sequence $\{(w_mW_m + Z_m)/A_{k_m}\}$ cannot be tight
and finally, since $V_n$ is independent,  by \eqref{eq:m7} the sequence $\widehat{T}_{k_m}$ cannot be tight and converge in distribution.
This completes the proof of the theorem.

	\end{proof}

\section{Proofs of the weak quenched limit laws}\label{sec:proof}

	In this final section we present a complete proof of our main results. We will begin by presenting a suitable coupling. Then we will treat the moderately sparse and strongly sparse case separately.

\subsection{Coupling}\label{subsec:coupling}

	In the first step we will prove our result along the marked sites. That is we analyse
  	\begin{equation}\label{eq:5:phi}
    		\phi_{n, \omega} (\cdot ) = \P_\omega \left[ { a_n^{- 2}} \left( T_{S_n} - \E_\omega [T_{S_n}]  \right) \in \: \cdot \: \right].
  	\end{equation}
	The main part of the argument concentrates on the limit law of $T^r_{S_n} = \sum_{k=1}^n \T_k^{r}$. Recall $U_n$ defined in~\eqref{eq:3:defUn}, which is
	the first time the reflected random walk hits $n$. For every $k>0$ and for fixed environment $\omega$ it holds $\T_k^{r}\overset{d}{=} U_{\xi_k}$.
	By the merit of Lemma~\ref{lem:m3} and Skorokhod's representation theorem we may assume that our space holds random variables $U_n^{(k)}$ and $\vartheta_k$ such that:
 	\begin{itemize}
 		\item $\{U_n^{(k)}\}_n, \vartheta_k$ for $k \in \N$, are independent copies of $\{U_n\}_n, \vartheta$;
 		\item $\{U_n^{(k)}, \vartheta_k : n, k \in \N\}$ and $\{\xi_k : k \in \N\}$ are independent;
 		\item $U_n^{(k)}/n^2 \to 2\vartheta_k$ in $L^2$ as $n \to \infty$;
 		\item for all $\omega$, $U_{\xi_k}^{(k)}$ and $\T_k^r$ have the same distribution under $\Po$.
 	\end{itemize}
 Observe that the convergence in $L^2$ is secured by the convergence in distribution and uniform integrability provided in Lemma \ref{lem:m3}.

 	To simplify the notation we will write $U_{\xi_k}$ instead of $U_{\xi_k}^{(k)}$.

 	\begin{prop}\label{prop:4:right}
 		Assume~\eqref{eq:2:assumption}. Then as $n \to \infty$,
 		\begin{equation*}
 			a_n^{-4} \var_\omega \left[  T_{S_n}^r - \E_\omega T_{S_n}^r - \sum_{k=1}^n \xi_k^2 (2\vartheta_k-1) \right] \overset{\P}{\to} 0.
 		\end{equation*}
 	\end{prop}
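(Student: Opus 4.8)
The plan is to exploit the fact that $T_{S_n}^r = \sum_{k=1}^n \T_k^r$ is a sum of $\Po$-independent summands, that the coupling gives $\T_k^r \overset{d}{=} U_{\xi_k}$ under $\Po$, and that in $L^2$ one has $U_{\xi_k}/\xi_k^2 \to 2\vartheta_k$. Writing
\[
	T_{S_n}^r - \E_\omega T_{S_n}^r - \sum_{k=1}^n \xi_k^2(2\vartheta_k - 1)
	= \sum_{k=1}^n \Big( \big(U_{\xi_k} - \E_\omega U_{\xi_k}\big) - \xi_k^2(2\vartheta_k - 1) \Big),
\]
I note that the $k$-th summand has $\Po$-mean zero (since $\E_\omega U_{\xi_k} = \xi_k^2$ by Lemma \ref{lem:m3}, and $\E 2\vartheta_k = 1$), so taking the quenched variance I get, using $\Po$-independence across $k$,
\[
	\var_\omega\!\left[ \sum_{k=1}^n \big(U_{\xi_k} - \E_\omega U_{\xi_k} - \xi_k^2(2\vartheta_k-1)\big) \right]
	= \sum_{k=1}^n \var_\omega\!\big[ U_{\xi_k} - \xi_k^2(2\vartheta_k - 1) \big]
	\leq \sum_{k=1}^n \Eo\big[ \big(U_{\xi_k} - \xi_k^2(2\vartheta_k-1)\big)^2 \big].
\]
Here I should be a little careful: $U_{\xi_k}$ and $\vartheta_k$ are coupled (via Skorokhod), so the quenched second moment is $\Eo[(U_{\xi_k} - \xi_k^2 + \xi_k^2 - \xi_k^2 2\vartheta_k)^2]$; expanding and using $\Eo[(U_{\xi_k}-\xi_k^2)(\xi_k^2 - \xi_k^2 2\vartheta_k)]$, everything is controlled by $\xi_k^4 \cdot \epsilon_k$ where $\epsilon_k := \E[(U_{\xi_k}/\xi_k^2 - 2\vartheta_k)^2 \mid \xi_k]$ is a function of $\xi_k$ alone with $\epsilon_m \to 0$ as $m\to\infty$ (the $L^2$ convergence in the coupling). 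Thus it suffices to show
\[
	\frac{1}{a_n^4} \sum_{k=1}^n \xi_k^2 \,\psi(\xi_k) \overset{\P}{\to} 0,
\]
where $\psi(m) := \E[(U_m/m^2 - 2\vartheta)^2]$ (bounded by $2\E[U_m^2/m^4] + 2\E[4\vartheta^2]$, hence uniformly bounded by Lemma \ref{lem:m3}) and $\psi(m) \to 0$.

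To prove this last convergence I would split the sum at a level $M$: fix $\eps>0$, pick $M$ so that $\psi(m) < \eps$ for $m > M$, and write $\sum_{k=1}^n \xi_k^2 \psi(\xi_k) = \sum_{k: \xi_k \leq M} \xi_k^2 \psi(\xi_k) + \sum_{k: \xi_k > M} \xi_k^2 \psi(\xi_k)$. The first piece is at most $M^2 \sup_m \psi(m) \cdot \#\{k \le n : \xi_k \le M\} \le C_M n$, which is $o(a_n^4)$ since $a_n = n^{1/\beta}\ell_1(n)$ with $\beta < 4$, so $a_n^4/n = n^{4/\beta - 1}\ell_1(n)^4 \to \infty$. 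The second piece is bounded by $\eps \sum_{k=1}^n \xi_k^2$; now if $\beta < 2$ then $\E\xi^2 = \infty$ and I use the regular-variation estimate $\sum_{k=1}^n \xi_k^2 / a_n^2 \overset{d}{\to}$ (a stable law), hence $\sum_{k=1}^n \xi_k^2 = O_{\P}(a_n^2) = o_{\P}(a_n^4)$ (as $a_n \to \infty$); if $\beta \ge 2$ then $\E\xi^2 < \infty$ and $\sum_{k=1}^n \xi_k^2 \sim n\,\E\xi^2 = o(a_n^4)$ by the law of large numbers, again because $a_n^4/n \to \infty$. In either case, after dividing by $a_n^4$ the second piece is $o_\P(1)$ uniformly in the choice made, so letting $\eps \downarrow 0$ gives the claim. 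Combining, $a_n^{-4}\var_\omega[\,\cdot\,] \to 0$ in $\P$-probability, which is the assertion.

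The main obstacle I anticipate is purely bookkeeping around the coupling: one must make sure that conditionally on $\{\xi_k\}$ the pair $(U_{\xi_k}, \vartheta_k)$ behaves like $(U_m,\vartheta)$ with $m=\xi_k$ and that the cross term in expanding $(U_{\xi_k}-\xi_k^2(2\vartheta_k-1))^2$ is genuinely a function of $\xi_k$ with the right $\to 0$ behaviour — this is exactly where the $L^2$ (rather than merely distributional) convergence $U_m^{(k)}/m^2 \to 2\vartheta_k$ from the coupling, together with the uniform integrability of $\{m^{-4}U_m^2\}$ from Lemma \ref{lem:m3}, is essential. Once that reduction is clean, the remaining estimate is the elementary truncation argument above, using only that $a_n$ is regularly varying of index $1/\beta$ with $\beta\in(0,4)$.
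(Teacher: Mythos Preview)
Your strategy matches the paper's: reduce to controlling $\sum_{k=1}^n \xi_k^4\,\psi(\xi_k)$ with $\psi(m)=\E\big[(U_m/m^2-2\vartheta)^2\big]$ bounded and tending to zero, then split the sum according to the size of $\xi_k$. The paper cuts at the moving level $\varepsilon a_n$ and invokes Karamata's theorem for the truncated piece; your fixed-level cut at $M$ is a legitimate and slightly simpler variant that avoids Karamata.

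There is, however, a slip that propagates through your final paragraph: after correctly writing that each summand is controlled by $\xi_k^4\epsilon_k$, you switch to $\sum_{k}\xi_k^{2}\psi(\xi_k)$ and carry out the whole truncation argument with $\xi_k^2$. The exponent must remain $4$. With the correction, the piece over $\{\xi_k>M\}$ is bounded by $\eps\sum_{k=1}^n\xi_k^4$, and $a_n^{-4}\sum_{k=1}^n\xi_k^4$ is \emph{not} $o_\P(1)$: it converges in law to a non-degenerate $\beta/4$-stable variable $L_{\beta/4}$ (this is precisely how the paper handles its large-$\xi$ piece). The argument is rescued by the factor $\eps$: for any $\delta>0$,
\[
\limsup_{n\to\infty}\P\Big[a_n^{-4}\sum_{k=1}^n\xi_k^4\,\psi(\xi_k)>\delta\Big]
\;\le\; \P\big[L_{\beta/4}>\delta/\eps\big]\;\xrightarrow[\eps\downarrow0]{}\;0,
\]
since the small piece $\sum_{k:\xi_k\le M}\xi_k^4\psi(\xi_k)\le C_M\, n=o(a_n^4)$ is absorbed for large $n$ (using $4/\beta>1$). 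With this repair your proof is complete and essentially coincides with the paper's.
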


 	\begin{proof}
		Firstly note that
		\begin{equation*}
			\var_\omega \left[  T_{S_n}^r - \E_\omega T_{S_n}^r - \sum_{k=1}^n \xi_k^2 (2\vartheta_k-1) \right] = \var_\omega \left[ \sum_{k=1}^n \left( U_{\xi_k} - 2\xi_k^2 \vartheta_k \right) \right] .
		\end{equation*}
  For $\varepsilon > 0 $ let $I^1_n = \{k \leq n : \xi_k > \varepsilon a_n\}$ and $I^2_n = \{k \leq n : \xi_k \le \varepsilon a_n\}$. Then for any $\delta>0$,
 	\begin{multline}	
 		\P\left[ \var_\omega \left[ \sum_{k=1}^n \left( U_{\xi_k} - 2\xi_k^2 \vartheta_k \right) \right] > \delta a_n^4 \right] \leq \\
 		 \P\left[  \sum_{k\in I^1_n } \xi_k^4 \var_\omega \left[  \frac{U_{\xi_k}}{\xi_k^2} - 2\vartheta_k \right] > \frac{ \delta a_n^4}{2} \right] + \P\left[  \sum_{ k\in I^2_n } \xi_k^4 \var_\omega \left[  \frac{U_{\xi_k}}{\xi_k^2} - 2\vartheta_k \right] > \frac{ \delta a_n^4}{2} \right]. \label{eq:5:cos}
 	\end{multline}
 	Since $U_n^{(k)}, \vartheta_k$ are independent copies of $U_n, \vartheta$ such that $U_n / n^2 \to \vartheta$ in $L^2$, there exists $M > 0$ such that
 	\begin{equation*}
 		\var_\omega \left[  \frac{U_{\xi_k}}{\xi_k^2} - 2\vartheta_k \right] < M \quad \text{ for all } k, \omega,
 	\end{equation*}
 	and, moreover, for $N \in \N$ large enough
 	\begin{equation*}
 		\var_\omega \left[  \frac{U_{N}^{(k)}}{N^2} - 2\vartheta_k \right] < \varepsilon \quad \text{ for all } k, \omega.
 	\end{equation*}
 	We can hence estimate, for $n$ sufficiently large,
 	\begin{equation*}
 		\P\left[  \sum_{k\in  I^1_n } \xi_k^4 \var_\omega \left[  \frac{U_{\xi_k}}{\xi_k^2} - 2\vartheta_k \right] > \frac{ \delta a_n^4}{2} \right] \leq
		\P\left[ \frac{ \sum_{k=1}^n \xi_k^4}{ a_n^4}  > \frac{ \delta }{2\varepsilon} \right].
 	\end{equation*}
Since  the sequence $ \sum_{k=1}^n \xi_k^4 / a_n^4$ converges weakly (under $\P$) to some $\beta/4$-stable variable $L_{\beta/4}$,
	the probability on the right hand side above converges to $\P[L_{\beta/4} >\delta /(2\varepsilon) ]$.
 	To estimate the second term in~\eqref{eq:5:cos}, note that
 	\begin{multline*}
 		\P\left[  \sum_{ k\in  I^2_n } \xi_k^4 \var_\omega \left[  \frac{U_{\xi_k}}{\xi_k^2} - 2\vartheta_k \right] > \frac{ \delta a_n^4}{2} \right]
 			\leq \P\left[  \sum_{k=1}^n \xi_k^4 \1_{\{\xi_k \leq \eps a_n\}} > \frac{ \delta a_n^4}{2M} \right] \\
 			\leq \frac{2 M}{\delta} a_n^{-4} \E\left[  \sum_{k=1}^n \xi_k^4 \1_{\{\xi_k \leq \eps  a_n\}}\right]
 			= \frac{2M}{\delta} n a_n^{-4}\E\left[\xi_1^4 \1_{\{\xi_1 \leq \eps a_n\}}\right].
 	\end{multline*}
By the Fubini theorem, we have
 	\begin{equation*}
 		\E\left[\xi_1^4 \1_{\{\xi_1 \leq \eps a_n \}}\right] \leq  \int_0^{\eps a_n } 4 t^3 \P[\xi_1 > t]  dt
 	\end{equation*}
and the Karamata  theorem~\cite[Theorem 1.5.11]{bingham:1987:regular} entails that the expression on the right is
asymptotically equivalent to $4\varepsilon^4 a_n^4 \P[\xi_1 > \varepsilon a_n] \sim 4 \varepsilon^{4-\beta} n^{-1}a_n^4$.
Finally, we can conclude that for any $\eps, \delta > 0$,
 	$$ \limsup_n\P\left[  \sum_{k=1}^n \xi_k^4 \var_\omega \left[  \frac{U_{\xi_k}}{\xi_k^2} - \vartheta_k \right] > \delta a_n^4 \right] \leq
 \frac{8M}{\delta} 
   \eps^{4-\beta} + \P\left[L_{\beta/4} > \frac{\delta}{2\eps}\right] $$
and passing with $\varepsilon$ to 0 we conclude the desired result.
 \end{proof}

	We are now ready to determine the weak limit of the sequence $\phi_n(\omega) = \phi_{n,\omega}$ given by \eqref{eq:5:phi}. Recall the map $G$ defined in \eqref{eq:G}.

\begin{lem}\label{lem:4:meas}
	The map $G$ is measurable.
\end{lem}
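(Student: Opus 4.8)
The plan is to exhibit $G$ as a pointwise limit of simpler maps obtained by keeping only the points of the configuration in a compact window, and to reduce the whole question to measurability of characteristic functions. Recall that $\M_1$, equipped with the Prokhorov metric, is a complete separable metric space whose Borel $\sigma$-algebra is generated by the (continuous, point-separating) maps $\mu \mapsto \widehat\mu(t) = \int_\RR e^{itx}\,\mu(\ud x)$, $t \in \mathbb{Q}$; hence it suffices to prove that $\zeta \mapsto \widehat{G(\zeta)}(t)$ is measurable for every $t$. On the source side, $\M_p$ with the vague topology is Polish and $\zeta \mapsto \int h\,\ud\zeta$ is measurable for each bounded Borel $h$ with compact support in $(0,\infty)$; writing $\int x^2\,\zeta(\ud x) = \sup_K \int x^2 \1_{[1/K,K]}(x)\,\zeta(\ud x)$ then shows that $A := \{\zeta : \int x^2\,\zeta(\ud x) < \infty\}$ is a measurable subset of $\M_p$.

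First I would treat the truncated maps. Put $\psi(s) = \EE[e^{is(2\vartheta-1)}]$; since $\EE[2\vartheta-1]=0$ and $\vartheta$ has finite moments by~\eqref{eq:mm1}, one has $|1-\psi(s)| \le Cs^2$. For $N \in \NN$ and $\zeta = \sum_i \delta_{x_i}$ let $G_N(\zeta)$ be the law of the \emph{finite} sum $\sum_{i:\,x_i \in (1/N,N)} x_i(2\vartheta_i-1)$, so that $\widehat{G_N(\zeta)}(t) = \prod_{i:\,x_i\in(1/N,N)}\psi(tx_i)$. To see that $\zeta \mapsto \widehat{G_N(\zeta)}(t)$ is measurable, dissect $(1/N,N)$ into dyadic intervals $J^{(n)}_1,\dots,J^{(n)}_{2^n}$ with centres $c^{(n)}_\ell$ and set $\Phi_n(\zeta) = \prod_{\ell}\psi(tc^{(n)}_\ell)^{\,\zeta(J^{(n)}_\ell)}$ (with the convention $0^0:=1$): each $\Phi_n$ is measurable because $\zeta \mapsto \zeta(J^{(n)}_\ell)$ is measurable and $(z,k)\mapsto z^k$ is Borel on $\mathbb{C}\times\mathbb{Z}_{\ge 0}$, and $\Phi_n(\zeta) \to \widehat{G_N(\zeta)}(t)$ as $n\to\infty$ for every fixed $\zeta$ by continuity of $\psi$ and local finiteness of $\zeta$ in $(0,\infty)$. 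Hence $\widehat{G_N(\zeta)}(t)$ is measurable in $\zeta$, and therefore $G_N\colon \M_p\to\M_1$ is measurable. (Alternatively one may decompose $\M_p$ according to the value of $\zeta((1/N,N))$ and use that a symmetric Borel function on $(1/N,N)^k$ descends to the $k$-point configuration space.)

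Finally I would pass to the limit. For $\zeta\in A$ the series $\sum_i x_i(2\vartheta_i-1)$ converges in $L^2$, its terms being independent, centred, with $\sum_i \var(x_i(2\vartheta_i-1)) = \var(2\vartheta-1)\int x^2\zeta(\ud x)<\infty$; moreover the partial sums over $(1/N,N)$ converge to it as $N\to\infty$, since for large $N$ there are no points above $N$ while the contribution of the points below $1/N$ has variance at most $C\int_{(0,1/N)} x^2\,\zeta(\ud x)\to 0$. Consequently $G_N(\zeta)\Rightarrow G(\zeta)$ for every $\zeta\in A$. Define $\widetilde G(\zeta) = \lim_N G_N(\zeta)$ whenever this limit exists in $\M_1$ and $\widetilde G(\zeta)=\delta_0$ otherwise; the set of convergence is measurable because $\M_1$ is complete (so Cauchy sequences converge, and $\zeta\mapsto d(G_N(\zeta),G_M(\zeta))$ is measurable for all $N,M$), and $\widetilde G$ is measurable, being a pointwise limit of measurable maps into a metric space patched with a constant on the complement. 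Since $G$ coincides with $\widetilde G$ on $A$ (by the convergence above) and with the constant map $\delta_0$ on $A^c$ (by the very definition of $G$), and $A$ is measurable, $G$ is measurable. The only point requiring genuine care is the measurability of the finite-configuration map $G_N$, handled by the dyadic discretisation above; everything else is routine soft measure theory.
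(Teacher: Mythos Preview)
Your argument is correct. The paper does not give its own proof here: it simply invokes \cite[Lemma~1.2]{peterson:2013:weak} and records, in Remark~\ref{rmk:4:1}, that the key step is the \emph{continuity} of
\[
G_2 \colon \ell^2 \ni (x_k)_{k\in\N} \longmapsto \PP\Big[\sum_{k\ge 1} x_k(2\vartheta_k-1)\in\cdot\Big]\in\mathcal{M}_1(\R).
\]
The cited proof factors $G$ as (i) a measurable ordering map $\M_p\to\RR^\NN$ sending $\zeta$ to its points listed in decreasing order, followed by (ii) $G_2$ on the $\ell^2$ part and the constant $\delta_0$ off it. Your route is genuinely different: you bypass the ordering map entirely, work with the symmetric quantity $\widehat{G_N(\zeta)}(t)=\prod_{x\in\zeta\cap(1/N,N)}\psi(tx)$ via a dyadic discretisation, and then take a pointwise limit in $\M_1$.

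What each approach buys: the paper's route isolates the continuity of $G_2$ as a standalone fact, and this is reused later in the proof of Theorem~\ref{thm:4:Sn} (to pass from $G_\varepsilon$ to $G$ via $\boldsymbol{x}^\varepsilon\to\boldsymbol{x}$ in $\ell^2$). Your route is more self-contained for the bare measurability statement and avoids the mild nuisance of choosing a measurable enumeration, at the cost of not producing the $\ell^2$-continuity lemma that the rest of Section~\ref{sec:proof} leans on. Both are valid; if you later need Remark~\ref{rmk:4:1} anyway, the paper's packaging is more economical.
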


\begin{rem}\label{rmk:4:1}
	The proof of Lemma \ref{lem:4:meas} is identical to that of Lemma 1.2 in \cite{peterson:2013:weak} and therefore will be omitted. Part of the proof is showing that the map
	\begin{equation*}
		G_2 : \ell^2 \ni (x_k)_{k \in \N} \mapsto \PP\left[ \sum_{k=1}^\infty x_k(2\vartheta_k - 1) \in \cdot \right] \in \mathcal{M}_1(\R)
	\end{equation*}
	is continuous.
\end{rem}

\begin{thm}\label{thm:4:Sn}
	Assume~\eqref{eq:2:assumption} and~\eqref{eq:2:assumption2}. Then
	\begin{equation*}
	\phi_n\Rightarrow G(N_\infty),
	\end{equation*}
	in $\mathcal{M}_1$,
	where $N_\infty$ is a Poisson point process with intensity $\beta  x ^{-\beta/2-1} \ud x/2$.	
\end{thm}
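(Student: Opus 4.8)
The plan is to transfer the statement, via the decomposition $T_{S_n}=T^l_{S_n}+T^r_{S_n}$ and the coupling of Section~\ref{subsec:coupling}, onto the empirical point process of the rescaled squared gaps, and then to run a continuous-mapping argument for $G$ with a truncation at the small atoms. Write $\zeta_n=\sum_{k=1}^n\delta_{(\xi_k/a_n)^2}$, a $\P$-random finite point measure, and observe that $G(\zeta_n)(\cdot)=\P_\omega[a_n^{-2}\sum_{k=1}^n\xi_k^2(2\vartheta_k-1)\in\cdot]$, a finite configuration automatically fulfilling the $\ell^2$ condition in~\eqref{eq:G}. Using that the Prokhorov distance between the $\Po$-laws of two random variables on a common probability space is at most the cube root of the $\Po$-second moment of their difference, provided that difference is $\Po$-centred, I would first bound $\mathrm{dist}(\phi_n,\phi_n^r)\le(a_n^{-4}\var_\omega T^l_{S_n})^{1/3}$, where $\phi_n^r(\cdot)=\P_\omega[a_n^{-2}(T^r_{S_n}-\Eo T^r_{S_n})\in\cdot]$; this tends to $0$ in $\P$-probability by Lemma~\ref{lem:var}. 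On the coupled space the sequences $\{\T^r_k\}_k$ and $\{U^{(k)}_{\xi_k}\}_k$ are equal in $\Po$-distribution (both have independent coordinates with matching marginals), so, using $\Eo\T^r_k=\xi_k^2$ from Lemma~\ref{lem:m3}, $\phi_n^r$ is the $\Po$-law of $a_n^{-2}\sum_k(U^{(k)}_{\xi_k}-\xi_k^2)$; comparing with $G(\zeta_n)$, the $\Po$-law of $a_n^{-2}\sum_k(2\xi_k^2\vartheta_k-\xi_k^2)$, and using $\EE\vartheta=\tfrac12$ to see that the difference is $\Po$-centred, Proposition~\ref{prop:4:right} (whose proof identifies the pertinent variance with $\var_\omega[\sum_k(U_{\xi_k}-2\xi_k^2\vartheta_k)]$) gives $\mathrm{dist}(\phi_n^r,G(\zeta_n))\topr0$. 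Hence $\mathrm{dist}(\phi_n,G(\zeta_n))\topr0$, and it remains to prove $G(\zeta_n)\Rightarrow G(N_\infty)$ in $\M_1$.

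For the point process convergence, since $a_n$ is regularly varying of index $1/\beta$ with $n\P[\xi>a_n]\to1$, for every $x>0$ we get $n\P[(\xi/a_n)^2>x]=n\P[\xi>\sqrt x\,a_n]\to x^{-\beta/2}$, so by the classical theory of point processes of i.i.d.\ regularly varying arrays $\zeta_n\Rightarrow N_\infty$ in the space of Radon point measures on $(0,\infty]$ with the vague topology, $N_\infty$ being the Poisson point process whose mean measure has tail $x^{-\beta/2}$, i.e.\ intensity $\beta x^{-\beta/2-1}\,\ud x/2$. Now $G$ is measurable (Lemma~\ref{lem:4:meas}) but not vaguely continuous, as it is sensitive to accumulation of atoms at $0$ through the $\ell^2$ condition; this forces a truncation. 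For $\eps>0$ let $\zeta_n^{(\eps)}$ and $N_\infty^{(\eps)}$ be the restrictions of $\zeta_n$ and $N_\infty$ to $(\eps,\infty]$. Almost surely $N_\infty$ charges neither $\{\eps\}$ nor $\{\infty\}$ and has only finitely many atoms above $\eps$, whence the maps $\zeta\mapsto\zeta|_{(\eps,\infty]}$ and ``finite configuration $\mapsto$ zero-padded decreasing vector of atoms'' are continuous at $N_\infty$; composing with the continuous map $G_2$ of Remark~\ref{rmk:4:1} and invoking the mapping theorem yields $G(\zeta_n^{(\eps)})\Rightarrow G(N_\infty^{(\eps)})$ in $\M_1$ for each fixed $\eps>0$.

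It remains to remove the truncation. On the one hand, because $\beta<4$ we have $\int x^2\,N_\infty(\ud x)<\infty$ $\P$-a.s.\ (its part on $(1,\infty]$ is a finite sum of a.s.\ finite atoms, and its part on $(0,1]$ has finite expectation $\int_0^1 x^2\,\beta x^{-\beta/2-1}\,\ud x/2$), hence $\sum_{x_k>\eps}x_k(2\vartheta_k-1)\to\sum_k x_k(2\vartheta_k-1)$ in $L^2(\Po)$ as $\eps\to0$ and continuity of $G_2$ gives $G(N_\infty^{(\eps)})\to G(N_\infty)$ a.s.\ in $\M_1$. On the other hand, $G(\zeta_n)$ and $G(\zeta_n^{(\eps)})$ are the $\Po$-laws of $a_n^{-2}\sum_k\xi_k^2(2\vartheta_k-1)$ and of the same sum restricted to $\{k:\xi_k>\sqrt\eps\,a_n\}$, so the discarded part has $\Po$-mean $0$ and $\Po$-variance $\EE[(2\vartheta-1)^2]\sum_{k:\xi_k\le\sqrt\eps a_n}(\xi_k/a_n)^4$; by Markov's inequality and Karamata's theorem, exactly as in the proof of Proposition~\ref{prop:4:right}, $\P[\sum_{k:\xi_k\le\sqrt\eps a_n}(\xi_k/a_n)^4>\delta]\le C\delta^{-1}\eps^{2-\beta/2}$ for all large $n$, so $\limsup_n\P[\mathrm{dist}(G(\zeta_n),G(\zeta_n^{(\eps)}))>\delta]\to0$ as $\eps\to0$ for every $\delta>0$. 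Feeding these three facts into the routine approximation lemma for weak convergence gives $G(\zeta_n)\Rightarrow G(N_\infty)$, which together with the first paragraph completes the proof.

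I expect the truncation bookkeeping in the last step to be the main obstacle --- checking that $G$ is continuous on the relevant sub-collections of point measures, and above all establishing the uniform-in-$n$ smallness of the small-atom contribution in the Prokhorov metric --- whereas the control of the left-excursion error (Lemma~\ref{lem:var}) and of the coupling error (Proposition~\ref{prop:4:right}) in the first paragraph is comparatively routine. The overall scheme follows that of~\cite{peterson:2013:weak}, adapted to the reflected random walk representation $\T^r_k\overset{d}{=}U_{\xi_k}$.
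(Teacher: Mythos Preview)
Your proposal is correct and follows essentially the same route as the paper: both reduce $\phi_n$ to $G(\zeta_n)$ via Lemma~\ref{lem:var} and Proposition~\ref{prop:4:right} (the paper packages this step as Lemma~\ref{lem:3:reduction}, you invoke the cube-root Prokhorov--$L^2$ bound directly), and both prove $G(\zeta_n)\Rightarrow G(N_\infty)$ by the same $\eps$-truncation scheme via~\cite[Theorem~3.2]{billingsley1999convergence}. The only cosmetic difference is in the uniform-smallness step~\eqref{eq:4:cond3}: you bound the truncated fourth-moment sum directly by Karamata, whereas the paper first dominates $\xi_k^2\1_{\{\xi_k\le\eps a_n\}}$ by $\eps a_n\xi_k$ and then appeals to the $\beta$-stable limit of $a_n^{-1}\sum_k\xi_k$; both work.
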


	In the proof of this result we will use the following lemma.
\begin{lem}[{\cite[Remark 3.4]{peterson:2013:weak}}]\label{lem:3:reduction}
  Let $\theta_n$ be a sequence of random probability measures on $\RR^2$ defined on the same probability space. Let $\gamma_n$ and $\gamma_n'$ denote the marginals of $\theta_n$. Suppose that
  \begin{equation*}
    \E_{\theta_n} (X-Y) \overset{\P}{\to} 0 \quad \mbox{and} \quad \var_{\theta_n}(X-Y) \overset{\P}{\to} 0,
  \end{equation*}
  where $X$ an $Y$ are the coordinate variables in $\RR^2$. If $\gamma_n \Rightarrow \gamma$, then $\gamma_n' \Rightarrow \gamma$.
\end{lem}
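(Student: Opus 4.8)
\textbf{Proof proposal for Lemma \ref{lem:3:reduction}.}

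The plan is to deduce weak convergence of the marginals $\gamma_n'$ from that of the marginals $\gamma_n$ by controlling, in probability, the ``distance'' between the coordinate variables $X$ and $Y$ under $\theta_n$. The natural quantitative vehicle is a Chebyshev-type bound: if under $\theta_n$ the difference $X-Y$ has small mean and small variance, then $X-Y$ is close to $0$ with $\theta_n$-probability close to $1$, and this closeness is uniform enough (after passing to the probability space carrying $\theta_n$) to transfer weak convergence. Since all statements are up to $\P$-null sets and $\overset{\P}{\to}$, I would first upgrade the hypotheses to almost sure convergence along a subsequence, prove the conclusion along that subsequence, and then invoke the subsequence principle to conclude $\gamma_n'\Rightarrow\gamma$ in the original sense.

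Concretely, here are the steps. First I would fix a bounded Lipschitz function $f\colon\RR\to\RR$ with Lipschitz constant $\mathrm{Lip}(f)$ and $\|f\|_\infty\le 1$; by the portmanteau theorem it suffices to show $\E_{\gamma_n'}[f]-\E_{\gamma}[f]\to 0$ in probability, equivalently along every subsequence there is a further subsequence along which it tends to $0$ a.s. Second, along a subsequence on which $\E_{\theta_n}(X-Y)\to 0$ and $\var_{\theta_n}(X-Y)\to 0$ $\P$-a.s., I would write
\begin{equation*}
	\bigl| \E_{\gamma_n'}[f] - \E_{\gamma_n}[f] \bigr|
	= \bigl| \E_{\theta_n}\bigl[ f(Y) - f(X) \bigr] \bigr|
	\le \E_{\theta_n}\bigl[ \min\{ \mathrm{Lip}(f)\,|X-Y|,\, 2\|f\|_\infty \} \bigr].
\end{equation*}
Third, for any threshold $\eta>0$ I would split the last expectation over $\{|X-Y|\le\eta\}$ and its complement, bounding the first part by $\mathrm{Lip}(f)\,\eta$ and the second by $2\|f\|_\infty\,\theta_n(|X-Y|>\eta)$; then Chebyshev's inequality under $\theta_n$ gives
\begin{equation*}
	\theta_n\bigl( |X-Y| > \eta \bigr)
	\le \frac{ \var_{\theta_n}(X-Y) + \bigl( \E_{\theta_n}(X-Y) \bigr)^2 }{ \eta^2 }
	\xrightarrow[n\to\infty]{} 0 \qquad \P\text{-a.s.}
\end{equation*}
Letting $n\to\infty$ and then $\eta\to 0$ shows $\E_{\gamma_n'}[f]-\E_{\gamma_n}[f]\to 0$ $\P$-a.s. along the subsequence. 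Fourth, since $\gamma_n\Rightarrow\gamma$ we have $\E_{\gamma_n}[f]\to\E_\gamma[f]$, so $\E_{\gamma_n'}[f]\to\E_\gamma[f]$ along the subsequence; the subsequence principle then yields $\E_{\gamma_n'}[f]\topr\E_\gamma[f]$ for every bounded Lipschitz $f$, which is precisely $\gamma_n'\Rightarrow\gamma$ in $\mathcal{M}_1$.

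I expect no serious obstacle here; the only point requiring a little care is the bookkeeping between the two layers of randomness, namely that $\theta_n$, $\gamma_n$, $\gamma_n'$ are themselves random measures, so ``$\E_{\gamma_n'}[f]$'' is a random variable and the convergences $\Rightarrow$ and $\overset{\P}{\to}$ are convergences of such random variables in $\mathcal{M}_1$ (respectively in $\RR$). The subsequence extraction must therefore be performed simultaneously for the mean and variance hypotheses and is harmless because both are real-valued sequences converging in probability. One should also note that $\min\{a|x|,b\}$ is a bounded function, so all the expectations above are finite regardless of whether $\theta_n$ has moments; only the hypotheses on $X-Y$ are used. This is exactly the argument behind \cite[Remark 3.4]{peterson:2013:weak}, and I would present it in this self-contained form.
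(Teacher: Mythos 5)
Your Chebyshev computation (steps two and three) is the right core estimate, but the reduction in step one and the conclusion in step four contain a genuine gap: you treat the hypothesis $\gamma_n \Rightarrow \gamma$ as if it gave convergence in probability of the integrals $\E_{\gamma_n}[f]$ to $\E_{\gamma}[f]$. In this lemma $\gamma_n$, $\gamma_n'$ and $\gamma$ are random elements of $\mathcal{M}_1$, and $\Rightarrow$ is convergence in distribution of $\mathcal{M}_1$-valued random variables; in the applications (Theorems \ref{thm:2:m1}--\ref{thm:2:m3}) the limit $\gamma$ is genuinely random, e.g.\ $G(N_\infty)$, and need not even be defined on the same probability space as the $\theta_n$'s. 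Hence $\gamma_n \Rightarrow \gamma$ yields only convergence in distribution of the real random variables $\E_{\gamma_n}[f]$, so the step ``$\E_{\gamma_n'}[f]\to\E_\gamma[f]$ along the subsequence'' is unjustified, and no subsequence principle can rescue it. The final equivalence you invoke is also incorrect in this setting: ``$\E_{\gamma_n'}[f]\topr\E_\gamma[f]$ for every bounded Lipschitz $f$'' would amount to convergence in probability of $\gamma_n'$ to $\gamma$ in $\mathcal{M}_1$, which is strictly stronger than, and not a characterization of, $\gamma_n'\Rightarrow\gamma$. So as written the argument both uses more than the hypotheses provide and aims at a statement that is not the conclusion.

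The repair is short because your estimate already gives the right intermediate fact: the two marginals of the coupling $\theta_n$ become close in a metric for weak convergence, in probability. Either take the supremum of your bound over $f$ with $\|f\|_\infty\le 1$ and Lipschitz constant $\le 1$ to get the bounded--Lipschitz distance, or, more directly, use the coupling inequality employed in the proof of Theorem \ref{thm:4:Sn} (see \cite[Theorem 11.3.5]{dudley2018real}),
\begin{equation*}
	\rho\left(\gamma_n,\gamma_n'\right)^3 \le \E_{\theta_n}\left|X-Y\right|^2 = \var_{\theta_n}(X-Y) + \bigl(\E_{\theta_n}(X-Y)\bigr)^2 \topr 0 ,
\end{equation*}
where $\rho$ is the Prokhorov metric on $\mathcal{M}_1$. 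Since $\gamma_n$ and $\gamma_n'$ are defined on the same probability space, the converging-together theorem for random elements of a metric space (\cite[Theorem 3.1]{billingsley1999convergence}) applied in $(\mathcal{M}_1,\rho)$ gives: $\gamma_n\Rightarrow\gamma$ and $\rho(\gamma_n,\gamma_n')\topr 0$ imply $\gamma_n'\Rightarrow\gamma$. This is exactly the argument behind \cite[Remark 3.4]{peterson:2013:weak}, which the paper cites instead of giving a proof. Your preliminary passage to almost surely convergent subsequences of the mean and variance hypotheses is harmless but unnecessary once the argument is run in probability.
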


\begin{proof}[Proof of Theorem~\ref{thm:4:Sn}]
	First, observe that the sequence of random point measures $N_n =\sum_{k=1}^n \delta_{\xi^2_k a_n^{-2}}$ converges weakly to $N_\infty$. Indeed, this follows by an appeal to
	 \cite[Proposition 3.21]{resnick:2013:extreme} and checking that
	\begin{equation*}
		n\P[\xi^2 /a_n^2 \in \cdot] \to \mu(\cdot) \quad \text{vaguely on } (0,\infty]
	\end{equation*}
	where $\mu(\ud x) = \beta x^{-\beta/2-1} \ud x/2$.

Since $G$ is not continuous, we cannot  simply apply the continuous mapping theorem  and, similarly as in \cite{peterson:2013:weak}, we are forced to follow a more tedious argument.
	Define
	$$
		G_\varepsilon : \mathcal{M}_p((0,\infty]) \ni \sum_{k=1}^\infty \delta_{x_k} \mapsto
		\PP\left[ \sum_{k=1}^\infty x_k(2\vartheta_k -1) \1_{ \{ x_k > \varepsilon\} } \in \cdot \right] \in \mathcal{M}_1(\R).
	$$
	Then for any $\varepsilon > 0$ the map $G_\varepsilon$ is continuous on the set $\mathcal M_p^\varepsilon := \left\{ \zeta \in \mathcal M_p : \zeta(\{\varepsilon , \infty\}) = 0 \right\}$;
	indeed, take $\zeta_n, \zeta \in \mathcal M_p^\varepsilon$ such that $\zeta_n \to \zeta$ vaguely. Then, by \cite[Proposition 3.13]{resnick:2013:extreme}, since the set
	$[\varepsilon, \infty]$ is compact in $(0,\infty]$, there exists $p_\varepsilon < \infty$ and an enumeration of points of $\zeta$ and $\zeta_n$ (for $n$ sufficiently large) such that
	\begin{equation*}
		\zeta_n(\cdot \cap [\varepsilon, \infty]) = \sum_{k=1}^{p_\varepsilon} \delta_{x_k^n}, \quad \zeta(\cdot \cap [\varepsilon, \infty]) = \sum_{k=1}^{p_\varepsilon} \delta_{x_k}
	\end{equation*}
	and
	\begin{equation*}
		(x_1^n, \dots, x_{p_\varepsilon}^n) \to (x_1, \dots, x_{p_\varepsilon}) \quad \textnormal{as } n\to\infty.
	\end{equation*}
	Therefore
	\begin{equation*}
		G_\varepsilon (\zeta_n) (\cdot)= \PP\left[ \sum_{k=1}^{p_\varepsilon} x_k^n(2\vartheta_k -1) \in \cdot \right] \Rightarrow
		\PP\left[ \sum_{k=1}^{p_\varepsilon} x_k(2\vartheta_k -1) \in \cdot \right] = G_\varepsilon(\zeta)(\cdot).
	\end{equation*}
	By~\cite[Theorem 3.2]{billingsley1999convergence}, to prove that $G(N_n) \Rightarrow G(N_\infty)$ it is enough to show
	\begin{align}
	G_\varepsilon(N_n) \Rightarrow_n G_\varepsilon(N_\infty)& \quad &\textnormal{for all } \varepsilon > 0, \label{eq:4:cond1} \\
	G_\varepsilon(N_\infty) \Rightarrow_\varepsilon G(N_\infty)&, \label{eq:4:cond2} \\
	\lim_{\varepsilon \to 0}\limsup_{n \to \infty} \P\left[ \rho (G_\varepsilon(N_n), G(N_n)) > \delta \right] = 0& &\textnormal{for all } \delta > 0,\label{eq:4:cond3}
	\end{align}
	where $\rho$ is the Prokhorov metric on $\mathcal{M}_1(\R)$.
	
	First, for any $\varepsilon > 0$, $N_\infty \in \mathcal{M}_p^\varepsilon$ almost surely. Thus \eqref{eq:4:cond1} is satisfied by the continuous mapping theorem since $G_\varepsilon$ is continuous.
	
	For any sequence $\boldsymbol{x} = (x_k)_{k\in\N} \in \ell^2$ and $\varepsilon > 0$ define $\boldsymbol{x}^\varepsilon \in \ell^2$ by $x_k^\varepsilon = x_k
\1_{\{ x_k > \varepsilon\}}$. By the dominated convergence theorem, $\boldsymbol{x}^\varepsilon \to \boldsymbol{x}$ in $\ell^2$ as $\varepsilon \to 0$. Hence, since the map $G_2$ defined in Remark \ref{rmk:4:1} is continuous, also $G_2(\boldsymbol{x}^\varepsilon) \Rightarrow G_2(\boldsymbol{x})$. This means that for any point process $\zeta = \sum_k \delta_{x_k}$ such that $\boldsymbol{x} \in \ell^2$,
	$$ G_\varepsilon(\zeta) = G_2(\boldsymbol{x}^\varepsilon) \Rightarrow G_2(\boldsymbol{x}) = G(\zeta),$$
	which gives \eqref{eq:4:cond2}.
	
	Recall that if $\mathcal L_X, \mathcal L_Y$ are laws of random variables $X,Y$ defined on the same probability space, then $\rho(\mathcal L_X, \mathcal L_Y)^3 < \E|X-Y|^2$ (c.f.~\cite[Theorem 11.3.5]{dudley2018real}). Thus
	\begin{equation*}
	\begin{split}
	\P\left[ \rho (G_\varepsilon(N_n), G(N_n)) > \delta \right]
	&\leq \P \left[ \Eo\left| a^{-1}_n \sum_{k=1}^n {\xi_k}{} \1_{\{\xi_k\leq \varepsilon a_n \}}(2\vartheta_k - 1) \right|^2 > \delta^3 \right] \\
	& = \P \left[ \EE(2\vartheta_1 - 1)^2 a^{-2}_n\sum_{k=1}^n {\xi_k^2} \1_{\{\xi_k\leq \varepsilon a_n\}} > \delta^3 \right],
	\end{split}
	\end{equation*}
	since $(2\vartheta_k-1)_k$ is a sequence of mean $0$ i.i.d. variables independent of the environment. Denote $C = \EE(2\vartheta_1-1)^2$, then
	\begin{equation*}
	\begin{split}
	\limsup_{n\to\infty} \P\left[ \rho (G_\varepsilon(N_n), G(N_n)) > \delta \right]
	&\leq \limsup_{n\to\infty} \P \left[ {a_n^{-2}} \sum_{k=1}^n {\xi_k^2} \1_{\{\xi_k\leq \varepsilon a_n\}} > \frac{\delta^3}{C} \right] \\
	&\leq \limsup_{n\to\infty} \P \left[ { a^{-2}_n}\sum_{k=1}^n {\xi_k \varepsilon a_n} \1_{\{\xi_k\leq \varepsilon a_n \}} > \frac{\delta^3}{C} \right] \\
	&\leq \lim_{n\to\infty} \P \left[ {a^{-1}_n}\sum_{k=1}^n {\xi_k} > \frac{\delta^3}{\varepsilon C} \right].
	\end{split}
	\end{equation*}
	The sequence $a^{-1}_n\sum_{k=1}^n \xi_k$ converges weakly to some $\beta$-stable variable $L_\beta$, therefore
	\begin{equation*}
	\lim_{n\to\infty} \P \left[ a^{-1}_n \sum_{k=1}^n {\xi_k} > \frac{\delta^3}{\varepsilon C} \right] = \P\left[ L_\beta > \frac{\delta^3}{\varepsilon C} \right] \to 0 \quad \textnormal{as } \varepsilon \to 0,
	\end{equation*}
	which proves \eqref{eq:4:cond3}.
	
	Therefore $G(N_n) \Rightarrow G(N_\infty)$. Now the claim of the theorem follows from Proposition~\ref{prop:4:right} and Lemmas \ref{lem:3:reduction} and~\ref{lem:var}.
\end{proof}

\subsection{Moderately sparse random environment}

	\begin{proof}[Proof of Theorem~\ref{thm:2:m1}]
		Since $\E\xi_1 < \infty$, $\alpha = (\E\xi_1)^{-1}$ is well defined.
Let $N_\infty = \sum_n \delta_{x_n}$ be  a Poisson point process as in Theorem~\ref{thm:4:Sn} and let
		$N_\infty^\alpha = \sum_n \delta_{\alpha^{2/\beta}x_n}$. Then $N_\infty^\alpha$ is a Poisson point process with intensity $\beta\alpha x^{-\beta/2-1}\ud x/2$.
Putting
\begin{align*}
  \phi_n^\alpha(\omega)(\cdot) &= \phi_{n,\omega}^\alpha (\cdot) = \Po\left[ a_n^{-2}(T_{S_{\alpha n}} - \Eo T_{S_{\alpha n}} ) \in \cdot \right]\\
  & = \Po\left[(a_{\alpha n} /a_n)^{2}a_{\alpha n}^{-2}(T_{S_{\alpha n}} - \Eo T_{S_{\alpha n}} ) \in \cdot \right],
\end{align*}
		where $S_{\alpha n} := S_{\lfloor \alpha n \rfloor}$,
		it follows from Lemma \ref{lem:3:reduction} and Theorem \ref{thm:4:Sn} that $\phi_n^\alpha \Rightarrow G(N_\infty^\alpha)$.

		It remains to show that
		\begin{equation*}
			a_n^{-4} \var_\omega\left[ (T_{S_{\alpha n}} - \Eo T_{S_{\alpha n}}) - (T_n - \Eo T_n) \right] = a_n^{-4} \var_\omega\left[ T_{S_{\alpha n}} - T_n \right] \overset{\P}{\to} 0,
		\end{equation*}
		from which it follows, by Lemma \ref{lem:3:reduction}, that $\mu_n \Rightarrow G(N_\infty^\alpha)$.

		Observe that on the event $\{ n \leq S_{\alpha n}\}$, for any $k$ such that $S_k \leq n$,
		\begin{align*}
			\var_\omega\left[ T_{S_{\alpha n}} - T_n \right] &= \sum_{j=n+1}^{S_{\alpha n}} \var_\omega \left[T_j - T_{j-1}\right]
			\leq \sum_{j={S_k} +1}^{S_{\alpha n}} \var_\omega \left[T_j - T_{j-1}\right]  \\ & = \var_\omega \left[ T_{S_{\alpha n}} - T_{S_{k}}\right]
		\end{align*}
		and similarly on $\{S_{\alpha n} \leq n\}$ for any $k$ such that $S_k \geq n$,
		\begin{align*}
			\var_\omega\left[ T_{S_{\alpha n}} - T_n \right] \leq \var_\omega \left[ T_{S_{k}} - T_{S_{\alpha n}}\right].
		\end{align*}
		Therefore  for any $\delta > 0$ and $\eps > 0$,
		\begin{align*}
			\P\left[a_n^{-4} \var_\omega\left[ T_{S_{\alpha n}} - T_n \right] > \delta \right]
				&\leq \P\left[ | S_{\alpha n} - n| > \eps n\right]  \\
				&+ \P\left[ a_n^{-4} \var_\omega \left[T_{S_{\lfloor\alpha n\rfloor + \lfloor \eps n \rfloor}} - T_{S_{\alpha n}}\right] > \delta \right] \\
				&+ \P\left[ a_n^{-4} \var_\omega \left[T_{S_{\alpha n}} - T_{S_{\lfloor\alpha n\rfloor - \lfloor \eps n \rfloor}}\right] > \delta \right] \\
				&= \P\left[ \left|\frac{S_{\alpha n}}{\alpha n} - \frac{1}{\alpha}\right| > \frac{\eps}{\alpha} \right]
					+ 2\P\left[ a_n^{-4} \var_\omega \left[ T_{S_{\eps n}} \right] > \delta\right].
		\end{align*}
		The first term tends to $0$ by the law of large numbers (recall $1/\alpha = \E\xi_1$). To estimate the second, note that
	 	\begin{equation*}
			\var_\omega \left[ T^r_{S_{\eps n}} \right] = \sum_{k=1}^{\eps n} \var_\omega T^r_{\xi_k} \overset{\eqref{eq:tvar}}{=} \sum_{k=1}^{\eps n} \frac{2}{3} (\xi_k^4 - \xi_k^2) \leq \sum_{k=1}^{\eps n} \xi_k^4,
		\end{equation*}
and $a_n^{-4} \sum_{k=1}^{\eps n} \xi_k^4 \Rightarrow \eps^{-4/\beta} L_{\beta/4}$ with respect to $\P$, while by Lemma \ref{lem:var}, $a_n^{-4} \var_\omega\left[T_{S_{\eps n}}^l \right] \overset{\P}{\to} 0$. Therefore
		\begin{equation*}
			 \limsup_{n\to \infty} \P\left[ a_n^{-4} \var_\omega \left[ T_{S_{\eps n}} \right] > \delta\right]
			\leq \P\left[ L_{\beta/4} > \frac{\delta}{\eps^{4/\beta}}\right].
		\end{equation*}
		The last expression can be made arbitrary small by taking sufficiently small $\eps$.
	\end{proof}

\subsection{Strong sparsity: preliminaries}
From now we assume that $\E \xi =\infty$. This case is technically more involved, however the underlying principle remains the same.
   	Denote the first passage time of $S$ via
  	\begin{equation*}
    		\nu_n =\inf\left\{ k >0 \: : \: S_k>n\right\}.
  	\end{equation*}
Recall that we write
	\begin{equation*}
		m_n = n\E \left[ \xi { \bf 1}_{\{ \xi \leq a_n \}} \right] \quad \mbox{and} \quad d_n = 1/\P[\xi>n].
	\end{equation*}
	and we denote by $\{ c_n\}_{n \in \NN}$ for the asymptotic inverse of $\{m_n\}_{n \in \NN}$, i.e. any increasing sequence of real numbers such that
	\begin{equation*}
		\lim_{n \to \infty}c_{m_n}/n = \lim_{n \to \infty}m_{c_n}/n = 1.
	\end{equation*} 
	As one may expect $S_n$ grows at a scale $m_n$ and thus $\nu_n$ must grow at a scale $c_n$ (in the sense of limit theorem which we will soon make precise). For our purposes we need to justify that $S_n/m_n$ and $\nu_n/c_n$
	converge jointly with some other characteristics of the trajectory of $S$. For this reason we will need to use the setting of c\`adl\`ag functions.
	Recall that $\mathbb{D}$ stands for the space of right continuous functions that have a left limit at each point.
 For $h \in \mathbb{D}$  we define $h^{\leftarrow} \in \mathbb{D}$ via
	\begin{equation*}
		h^{\leftarrow}(t) = \inf \{ s \: : \:h(s) > t \}.
	\end{equation*}
	Consider $\mathbb{D}^{\uparrow} \subseteq \mathbb{D}$ consisting of non-decreasing functions and take
	$M \colon \mathbb{D}^\uparrow \to \mathcal{M}$ given via
	\begin{equation*}
		M(h)  = \sum_{k}\delta_{x_k}\otimes\delta_{t_k},
	\end{equation*}
	where for $h \in \mathbb{D^\uparrow}$, $\{t_k\}$ are the discontinuity points of $h$ and $x_k = h(t_k)-h(t_k^-)$ is the size of the jump at $t_k$.
	
	\begin{lem}\label{lem:5:M}
		The function $M \colon \mathbb{D}^{\uparrow}\to  \mathcal{M}$ is continuous with respect to $J_1$ topology.
	\end{lem}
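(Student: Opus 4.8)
The plan is to establish sequential continuity, which suffices since both $\mathbb{D}^\uparrow$ with the $J_1$ topology and $\mathcal{M}$ with the vague topology are metrisable. So fix a sequence $h_n\to h$ in $J_1$; we must show $\int g\,\ud M(h_n)\to\int g\,\ud M(h)$ for every nonnegative continuous compactly supported $g$ on $(0,\infty)\times\RR_+$. Given such a $g$, pick $\eps>0$ and a point $T$ at which $h$ is continuous, so large that $\operatorname{supp}g\subseteq[\eps,\infty)\times[0,T]$. Since $h$ is non-decreasing with $h(T)<\infty$ it has only finitely many jumps of size $>\eps/2$ with time-coordinate in $[0,T]$; list them as $(y_i,s_i)_{i=1}^m$ with $0<s_1<\dots<s_m\le T$ and $y_i>\eps/2$, so that $\int g\,\ud M(h)=\sum_{i=1}^m g(y_i,s_i)$. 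The whole matter then reduces to showing that, for $n$ large, $M(h_n)$ has exactly $m$ atoms with first coordinate $>\eps/2$ and second coordinate in $[0,T]$, and that these converge to the $(y_i,s_i)$.

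First I would invoke the time-change characterisation of $J_1$-convergence: since $T$ is a continuity point of $h$, the restrictions $h_n|_{[0,T]}$ converge to $h|_{[0,T]}$, so there are increasing homeomorphisms $\lambda_n$ of $[0,T]$ onto itself with $\|\lambda_n-\mathrm{id}\|_\infty\to 0$ and, writing $\tilde h_n:=h_n\circ\lambda_n$, with $\|\tilde h_n-h\|_\infty\to 0$. As $\lambda_n$ fixes the endpoints and is a homeomorphism, the jumps of $\tilde h_n$ on $[0,T]$ are exactly those of $h_n$ with time-coordinates moved by $\lambda_n$, sizes unchanged; thus the claim reduces to the statement that \emph{for all large $n$, $\tilde h_n$ has precisely $m$ jumps of size $>\eps/2$ on $[0,T]$, located at $s_1,\dots,s_m$, with $\Delta\tilde h_n(s_i)\to y_i$}. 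Granting this, since $g=0$ on $\{x\le\eps/2\}$ and $\lambda_n(T)=T$, one gets for $n$ large $\int g\,\ud M(h_n)=\sum_{i=1}^m g(\Delta\tilde h_n(s_i),\lambda_n(s_i))\to\sum_{i=1}^m g(y_i,s_i)=\int g\,\ud M(h)$ by continuity of $g$, and the lemma follows.

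Proving this jump-matching statement is the heart of the matter and the main obstacle, and it is precisely here that monotonicity is used: it lets one read the jump structure of $\tilde h_n$ off its uniform closeness to $h$. I would argue in two steps. (i) Large jumps of $\tilde h_n$ accumulate only at large jumps of $h$: if $\tilde h_n$ has a jump of size $>\eps/2$ at $v_n$ and $v_n\to v^\ast$ along a subsequence, then for each $\delta>0$ monotonicity gives $\tilde h_n(v^\ast+\delta)-\tilde h_n(v^\ast-\delta)\ge\eps/2$ for $n$ large, whence, letting $n\to\infty$ (using $\|\tilde h_n-h\|_\infty\to0$), then $\delta\downarrow0$, then right-continuity of $h$, one obtains $\Delta h(v^\ast)\ge\eps/2$, i.e.\ $v^\ast\in\{s_1,\dots,s_m\}$; together with the crude bound $\#\{\text{jumps of }\tilde h_n\text{ of size}>\eps/2\text{ on }[0,T]\}\le 2\tilde h_n(T)/\eps$, which is bounded, this shows that for $n$ large all such jumps lie in an arbitrarily small prescribed neighbourhood of the $s_i$. (ii) Near each $s_i$ there is exactly one, located at $s_i$, with size $\to y_i$: fix $\kappa\in(0,\eps/10)$ and choose $\delta>0$ (the same for all $i$) so that the intervals $(s_i-\delta,s_i+\delta)$ are disjoint and $h(s_i^-)-h(s_i-\delta)<\kappa$, $h(s_i+\delta)-h(s_i)<\kappa$; then for $n$ large $\|\tilde h_n-h\|_\infty<\kappa$ forces $\tilde h_n\le h(s_i^-)+\kappa$ on $[s_i-\delta,s_i)$ and $\tilde h_n\ge h(s_i)-\kappa$ on $[s_i,s_i+\delta)$, so $\tilde h_n$ has at $s_i$ a jump of size in $[y_i-2\kappa,\,y_i+2\kappa]$ (in particular $>\eps/2$), while on $(s_i-\delta,s_i)$ and on $(s_i,s_i+\delta)$ it varies by less than $3\kappa<\eps/2$, so this is its only large jump in $(s_i-\delta,s_i+\delta)$. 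Combining (i) and (ii) and letting $n\to\infty$ and then $\kappa\downarrow0$ yields the statement. (Alternatively, this is a classical continuity property of the jump functional on $(\mathbb{D},J_1)$; one may also adapt the analogous step in the annealed analysis of~\cite{buraczewski:2020:random}.)
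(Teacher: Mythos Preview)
Your proof is correct and follows essentially the same approach as the paper: both use the time-change characterisation of $J_1$ convergence to match the large jumps of $h_n$ (respectively $\tilde h_n=h_n\circ\lambda_n$) with those of $h$, then pass to the limit in the finite sum against the test function. The only cosmetic difference is that the paper composes on the other side (working with $f\circ\lambda_n$ rather than $f_n\circ\lambda_n$), and your treatment is somewhat more careful---choosing $T$ a continuity point, using the $\eps/2$ threshold to leave room, and making the role of monotonicity explicit in steps~(i)--(ii)---whereas the paper asserts the jump-matching more briefly.
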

	\begin{proof}
		Let $f_n, f \in  \mathbb{D}^{\uparrow}$ be such that $f_n \to f$ in $J_1$ topology. For any nonnegative, continuous function $\varphi \colon (0, +\infty]\times [0, +\infty) \to \RR$ with compact support we can find $\varepsilon>0$ and
		$T>0$ such that	$\varphi(x,t) =0$ if $x\leq\varepsilon$ or $t\geq T$. Since $f \in \mathbb{D}^{\uparrow}$, it has only finitely many jumps on the interval $[0, T]$ that are greater than $\varepsilon$,
		\begin{equation*}
			\int \varphi(x,t) \: Mf(\ud x, \ud t) = \sum_{k=1}^N \varphi(x_k, t_k)
		\end{equation*}
		 for some $N$,  $t_1 < \dots < t_N < T$ and $x_k > \varepsilon$.
		
		By the definition of $J_1$ topology, there exists a sequence of continuous increasing functions $\lambda_n : (0,\infty) \to (0,\infty)$ such that
		\begin{equation}\label{eq:4:J1}
			\sup_{t \in [0,T]} |\lambda_n(t) - t| \to 0, \quad \sup_{t \in [0,T]} |f_n(t) - f(\lambda_n(t)) | \to 0.
		\end{equation}
		For $n$ sufficiently large, $\sup_{t \in [0,T]} |\lambda_n(t) - t| < T-t_N$, which means that $f \circ \lambda_n$ has exactly $N$ jumps on the interval $[0,T)$, at times $\lambda_n^{-1}(t_k)$. Moreover, for large enough $n$,
		$\sup_{t \in [0,T]} |f_n(t) - f(\lambda_n(t)) | < \varepsilon/3,$
		from which it follows that $f_n$ cannot have jumps bigger than $\varepsilon$ apart from the discontinuity points of $f \circ \lambda_n$.
		
		Fix $k \in \{1, \dots N\}$. It follows from \eqref{eq:4:J1} that for $n$ large enough $f_n$ does have a jump at $\lambda_n^{-1}(t_k)$, denote it by $x_k^n$, and observe that $x_k^n \to x_k$ as $n\to \infty$; in particular
		$x_k^n > \varepsilon$ for large $n$. It also follows that $\lambda_n^{-1}(t_k) \to t_k$ as $n\to\infty$. This means that for $n$ sufficiently large
		\begin{equation*}
			\int \varphi(x,t) \: Mf_n(\ud x, \ud t) = \sum_{k=1}^N \varphi(x^n_k, \lambda_n^{-1}(t_k))
		\end{equation*}
		and the last expression tends to $\int \varphi(x,t) \: Mf(\ud x, \ud t)$ as $n\to \infty$, which gives $Mf_n \to Mf$.
	\end{proof}

	Consider a random element of $\mathcal{M}_1((0, +\infty]\times [0, +\infty))$ given by
	\begin{equation*}
		\Lambda_n = \sum_{j=1}^\infty \delta_{\xi_j/a_n} \otimes \delta_{j/n}
	\end{equation*}
	and random elements of $\mathbb{D}^\uparrow$ defined via
	\begin{equation}\label{eq:5:sub}
		L_n(t) = S_{  \lfloor nt \rfloor}/ a_n \mbox{ for } \beta< 1, \quad \mbox{and} \quad
			\quad \widetilde{L}_n(t) = S_{ \lfloor nt \rfloor}/m_n \mbox{ for } \beta = 1.
	\end{equation}
	Recall $\Upsilon \colon \mathbb{D}^\uparrow \to \RR$ defined in~\eqref{eq:2:upsilon}.

	\begin{lem}\label{lem:5:conv}
		If $\beta < 1$, then
		\begin{equation}\label{eq:5:conv<1}
			\left(L_n, \Lambda_n, \frac{\nu_n}{d_n} , \frac{ S_{\nu_n-1}}{n}\right) \Rightarrow (L,M(L),L^{\leftarrow}(1),\Upsilon(L))
		\end{equation}
		in $ (\mathbb{D}, J_1) \times \mathcal{M}\times \RR\times \RR$, where $L = (L_t)_{t \geq 0}$ is strictly increasing $\beta$-stable subordinator with L\'evy measure given by $\nu(x, +\infty) =  x^{-\beta} $.
		
		If $\beta = 1$, then
		\begin{equation}\label{eq:5:conv=1}
			\left(\widetilde{L}_n, \frac{\nu_n}{c_n} , \frac{ S_{\nu_n-1}}{n}\right) \Rightarrow ( {\rm id},1,1)
		\end{equation}
		in $ (\mathbb{D}, J_1)\times \RR\times \RR$, where ${\rm id} \colon \RR_+ \to \RR_+$ is the identity function.
	\end{lem}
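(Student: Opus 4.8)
The plan is to reduce both displays to a single functional limit theorem for the rescaled partial-sum process of the gaps $(\xi_j)_j$, and then to read off the remaining coordinates through the continuity of the jump map $M$ (Lemma~\ref{lem:5:M}) and of the first-passage map $h\mapsto h^{\leftarrow}(1)$ on a sufficiently rich set of paths.

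\textit{The case $\beta<1$.} Here $L_n$ denotes the partial-sum process of the $\xi_j$ rescaled so that level $n$ is reached at a time of order $d_n$, the space variable being normalized by $n\sim a_{d_n}$ (recall $n\,\P[\xi>a_n]\to1$, so $d_{a_n}\sim n$, equivalently $a_{d_n}\sim n$). The backbone is the classical invariance principle for sums of i.i.d.\ summands with regularly varying tail of index $-\beta\in(-1,0)$: $L_n$ converges in $(\mathbb D,J_1)$ to the strictly increasing $\beta$-stable subordinator $L$ with $\nu(x,+\infty)=x^{-\beta}$ (Skorokhod; see also \cite{resnick:2013:extreme}). Jointly, $\Lambda_n\Rightarrow M(L)$: since $d_n\,\P[\xi_1/n\in\cdot]\to\nu$ vaguely on $(0,+\infty]$ by regular variation, \cite[Proposition~3.21]{resnick:2013:extreme} gives convergence of $\Lambda_n$ to a Poisson point process with intensity $\nu\otimes{\rm LEB}$, which is distributed as $M(L)$. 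To obtain the two \emph{jointly} with the \emph{same} $L$ I would write $L_n$ as the jump-sum functional of $\Lambda_n$ and truncate: for $\varepsilon>0$ let $L_n^{(\varepsilon)}$, $L^{(\varepsilon)}$ keep only jumps of size $>\varepsilon$; the $\varepsilon$-truncated jump-sum is continuous on the a.s.-full set of point measures that charge no mass to $\{\varepsilon\}\times\RR_+$, one has $L^{(\varepsilon)}\to L$ a.s.\ uniformly on compacts (the jumps are summable as $\beta<1$), and $\sup_{t\le T}|L_n(t)-L_n^{(\varepsilon)}(t)|\to0$ uniformly in $n$ in probability because $\EE\big[\sum_{j\le d_nT}\xi_j\1_{\{\xi_j\le\varepsilon a_{d_n}\}}\big]/a_{d_n}\to T\varepsilon^{1-\beta}/(1-\beta)$ by Karamata's theorem \cite[Theorem~1.5.11]{bingham:1987:regular}, which vanishes as $\varepsilon\to0$; then \cite[Theorem~3.2]{billingsley1999convergence} yields $(L_n,\Lambda_n)\Rightarrow(L,M(L))$.

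It remains to adjoin $\nu_n/d_n$ and $S_{\nu_n-1}/n$. As $\nu_n=\inf\{k:S_k>n\}$, up to the $O(1/d_n)$ error from the floor one has $\nu_n/d_n=L_n^{\leftarrow}(1)$ and $S_{\nu_n-1}/n=L_n(L_n^{\leftarrow}(1)-)$, so it suffices that the map $\Phi\colon\mathbb D^{\uparrow}\to\mathbb D^{\uparrow}\times\RR\times\RR$, $\Phi(h)=(h,h^{\leftarrow}(1),h(h^{\leftarrow}(1)-))$, be continuous at every strictly increasing $h$: such an $h$ has no flat stretch at level $1$, which makes $h^{\leftarrow}(1)$ stable under $J_1$-perturbations, and since $J_1$-convergence preserves the jump straddling level $1$ it also stabilizes the left limit there. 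Because $\nu((0,1])=\infty$, $L$ is a.s.\ strictly increasing, so $\Phi$ is a.s.\ continuous at $L$; moreover for such $L$ one has $\Upsilon(L)=L(L^{\leftarrow}(1)-)$ (if $L$ jumps over $1$ at $\tau=L^{\leftarrow}(1)$ then $\{t:L(t)\le1\}=[0,\tau)$ and the supremum of $L$ over it is $L(\tau-)$). Composing $\Phi$ with the previous paragraph and applying the continuous mapping theorem gives~\eqref{eq:5:conv<1}.

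\textit{The case $\beta=1$.} Now $m_n=n\,\EE[\xi\1_{\{\xi\le a_n\}}]$ is regularly varying of index $1$ and $a_n=o(m_n)$. By the classical one-sided stable limit theorem with truncated-mean centering, $(S_n-m_n)/a_n$ is tight, hence $S_n/m_n\to1$ in probability; the one-dimensional statement $S_{\lfloor nt\rfloor}/m_n\to t$ together with the monotonicity of $\widetilde L_n$ upgrades (a P\'olya-type argument, the limit ${\rm id}$ being deterministic and continuous) to $\widetilde L_n\Rightarrow{\rm id}$ in $(\mathbb D,J_1)$. The same argument with $c_n$ in place of $n$ and $m_{c_n}\sim n$ gives $S_{\lfloor c_n\cdot\rfloor}/n\Rightarrow{\rm id}$, and since $h\mapsto(h^{\leftarrow}(1),h(h^{\leftarrow}(1)-))$ is continuous at the strictly increasing continuous path ${\rm id}$, it follows that $\nu_n/c_n\Rightarrow1$ and $S_{\nu_n-1}/n\Rightarrow1$. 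All three limits being deterministic, they automatically hold jointly, which is~\eqref{eq:5:conv=1}.

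\textit{Main obstacle.} The delicate step is the $\beta<1$ joint convergence, since neither the jump-sum functional nor $\Phi$ is continuous: one must truncate the small jumps and control the discarded mass uniformly in $n$ (this is precisely where $\beta<1$ and Karamata enter), and correctly identify the a.s.\ continuity set of $\Phi$ --- the strictly increasing c\`adl\`ag paths --- using that $L$ has infinite L\'evy measure. The $\beta=1$ functional weak law of large numbers with the non-obvious centering $m_n$ leans on the refined limit theory for barely-infinite-mean sums, but is otherwise routine.
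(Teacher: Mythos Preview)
Your proposal is correct and follows the same broad strategy as the paper --- deduce everything from the functional limit theorem for the rescaled partial-sum process via continuous mapping --- but you take a noticeably longer route in two places.

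First, for the joint convergence of $(L_n,\Lambda_n)$ you separately establish $\Lambda_n\Rightarrow$ PPP via \cite[Proposition~3.21]{resnick:2013:extreme} and then run a truncation/Billingsley~3.2 argument to couple it back to $L_n$. This works, but it is unnecessary: since $\Lambda_n=M(L_n)$ and Lemma~\ref{lem:5:M} (which you cite) says $M$ is $J_1$-continuous, the continuous mapping theorem gives $(L_n,M(L_n))\Rightarrow(L,M(L))$ in one line. That is exactly what the paper does.

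Second, for the first-passage and undershoot coordinates you argue from scratch that $h\mapsto(h^{\leftarrow}(1),h(h^{\leftarrow}(1)-))$ is continuous at strictly increasing paths. The paper instead writes $\nu_n/d_n=L_{d_n}^{\leftarrow}(n/a_{d_n})$ and $S_{\nu_n-1}/a_{d_n}=\Phi(L_{d_n})(n/a_{d_n})$ with $\Phi(h)=(h^{-}\circ(h^{\leftarrow})^{-})^{+}$, and then quotes off-the-shelf continuity results: Whitt~\cite{whitt:1971} for $h\mapsto h^{\leftarrow}$ in $M_1$, and \cite{henry:2011} for the $J_1$-continuity of $\Phi$ on strictly increasing unbounded paths, together with the absence of fixed discontinuities to evaluate at the moving point $n/a_{d_n}\to1$. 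Your hands-on sketch is fine in spirit, but the paper's route is both shorter and already rigorous. Note also a small slip in your write-up: with the paper's definition $L_n(t)=S_{\lfloor nt\rfloor}/a_n$ one has $L_n^{\leftarrow}(1)=\nu_{a_n}/n$, not $\nu_n/d_n$; the object you really want (and tacitly describe in your opening sentence) is $L_{d_n}$, which is what the paper uses for those two coordinates.

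For $\beta=1$ your argument and the paper's coincide.
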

	\begin{proof}
		Consider first $\beta < 1$. By an appeal to standard functional weak convergence to stable L\'evy motion \cite[Corollary 7.1]{resnick2007heavy},
		$$L_n \Rightarrow L \quad \textnormal{in } (\mathbb D, J_1).$$
		Note that
		$$ \Lambda_n = M(L_n) $$
		and the function $M$ is $J_1$-continuous by Lemma \ref{lem:5:M}. 
		Moreover,
		$$ \frac{\nu_n}{d_n} = L_{d_n}^\leftarrow\left( \frac{n}{a_{d_n}}\right) $$
		and the map $h \mapsto h^{\leftarrow}$ is continuous in $M_1$ topology by~\cite{whitt:1971}.
		In what follows, we will use notation introduced in \cite{henry:2011}. For $h \in \mathbb{D}$ let $h^-$ be the lcrl (left-continuous, having right-hands limits) version of $h$, that is,
		$h^-(t) = \lim_{\eps \to 0^+} h(t-\eps)$ and $h^-(0) = 0$. Similarly, let $h^+$ denote rcll version of a lcrl path. Let $\Phi: \mathbb{D}^\uparrow \to \mathbb{D}$ be given by
		$$
			\Phi(h) = (h^- \circ (h^{\leftarrow})^-)^+.
		$$
		Finally, observe that for any $k\in \NN$, $\Phi(L_{d_n})$ on the set $[S_k/a_{d_n}, S_{k+1}/a_{d_n})$ is constant and equal to $S_k/a_{d_n}$, therefore
		$$\frac{S_{\nu_n - 1}}{a_{d_n}} = \Phi\left(L_{d_n}\right)\left(\frac{n}{a_{d_n}}\right). $$
		By \cite{henry:2011}, $\Phi$ is $J_1$-continuous on $\mathbb{D}^{\uparrow\uparrow} \subset \mathbb{D}$, the set of strictly increasing, unbounded functions. Since $L \in\mathbb{D}^{\uparrow\uparrow}$ almost surely,  by the continuous mapping theorem we have joint convergence in distribution
		$$ (L_n, M(L_n), L_n^\leftarrow, \Phi(L_n)) \to (L,M(L),L^\leftarrow,\Phi(L)) $$
		in $ (\mathbb{D}, J_1) \times \mathcal{M}_p((0,\infty]\times[0,\infty)) \times (\mathbb{D}, M_1) \times (\mathbb{D}, J_1)$. By Skorokhod's representation theorem we may assume the above convergence holds almost surely.
		
		Since the limiting processes admit no fixed discontinuities, Proposition 2.4 in \cite{henry:2011} gives
		$$ \frac{\nu_n}{d_n} \to L^\leftarrow(1) \quad \textnormal{and} \quad \frac{S_{\nu_n -1}}{a_{d_n}} \to \Phi(L)(1) = \Upsilon(L) $$
		almost surely.
		
		The case $\beta=1$ is similar and follows from the fact that by~\cite[Corollary 7.1]{resnick2007heavy} and properties of $J_1$ topology,
		$$\widetilde{L}_n \Rightarrow {\rm id} \quad \textnormal{in } (\mathbb D, J_1).$$
		One can combine this with
			$$ \frac{\nu_n}{c_n} = \widetilde L_{c_n}^\leftarrow\left( \frac{n}{m_{c_n}}\right), 
\qquad \frac{S_{\nu_n - 1}}{m_{c_n}} = \Phi\big(\widetilde L_{m_n}\big)\left(\frac{n}{m_{c_n}}\right) $$
	and the arguments presented in the case $\beta<1$ to get the desired claim.
	\end{proof}


 \begin{rem} \label{rmk:as}
 	Observe that all information on the sequence $(\xi_k)_k$ is carried by the process $\Lambda_n$ and therefore by $L_n$ or, equivalently, $\widetilde{L}_n$. We may thus assume that our space holds random variables $U_n^{(k)}, \vartheta_k$ as described in Section \ref{subsec:coupling} and at the same time the convergence given in Lemma \ref{lem:5:conv} holds almost surely.
 \end{rem}

	\begin{lem}\label{lem:5:var1}
		Assume that~\eqref{eq:2:assumption} holds true.
		If  $\beta < 1$, then
		\begin{equation*}
			n^{-4} \var_\omega \left[  T_{S_{\nu_n-1}}^r - \E_\omega T_{S_{\nu_n-1}}^r - \sum_{k=1}^{\nu_n-1} \xi_k^2 (2\vartheta_k-1) \right] \overset{\P}{\to} 0.
		\end{equation*}
		If $\beta = 1$ and $\E \xi =\infty$, then
		\begin{equation*}
			a_{c_n}^{-4} \var_\omega \left[  T_{S_{\nu_n-1}}^r - \E_\omega T_{S_{\nu_n-1}}^r - \sum_{k=1}^{\nu_n-1} \xi_k^2 (2\vartheta_k-1) \right] \overset{\P}{\to} 0.
		\end{equation*}
	\end{lem}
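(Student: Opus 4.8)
The plan is to reduce the statement to its deterministic‑index counterpart, Proposition~\ref{prop:4:right}, by a random‑time‑change argument, exploiting that $\nu_n$ depends only on the environment $(\xi_k)_k$ and that the relevant conditional variance is monotone in the number of summands. Working on the enlarged probability space of Section~\ref{subsec:coupling} (and using Remark~\ref{rmk:as} to have the coupling and the convergence in Lemma~\ref{lem:5:conv} available simultaneously), I would introduce, exactly as in the proof of Proposition~\ref{prop:4:right},
\begin{equation*}
	V_m = \var_\omega\left[ \sum_{k=1}^m \bigl( U_{\xi_k} - 2\xi_k^2\vartheta_k \bigr)\right] = \sum_{k=1}^m \xi_k^4\, \var_\omega\left[ \frac{U_{\xi_k}}{\xi_k^2} - 2\vartheta_k\right],
\end{equation*}
the second equality being valid because, for a fixed environment, the families $\{(U_n^{(k)})_n,\vartheta_k\}_k$ are mutually $\Po$‑independent and independent of the walk. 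In particular $m\mapsto V_m$ is nondecreasing. Since $\nu_n$ is a deterministic function of the environment, conditionally on the environment it is a constant, so the quantity to be controlled in Lemma~\ref{lem:5:var1} is precisely $V_{\nu_n-1}$, and Proposition~\ref{prop:4:right} gives $a_N^{-4}V_N\overset{\P}{\to}0$ along every deterministic $N\to\infty$.

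Write $\kappa_n=d_n$ and $s_n=n$ when $\beta<1$, and $\kappa_n=c_n$ and $s_n=a_{c_n}$ when $\beta=1$. By Lemma~\ref{lem:5:conv} the sequence $\nu_n/\kappa_n$ converges in distribution (to $L^{\leftarrow}(1)$ when $\beta<1$, to $1$ when $\beta=1$) and is therefore tight: for every $\delta>0$ there is $b<\infty$ with $\P[\nu_n>b\kappa_n]<\delta$ for all large $n$. On the complementary event monotonicity yields $V_{\nu_n-1}\le V_{\lceil b\kappa_n\rceil}$, so for every $\eta>0$,
\begin{equation*}
	\P[V_{\nu_n-1}>\eta s_n^4]\le \P[\nu_n>b\kappa_n]+\P[V_{\lceil b\kappa_n\rceil}>\eta s_n^4].
\end{equation*}
For the last term I would apply Proposition~\ref{prop:4:right} with $N=\lceil b\kappa_n\rceil$, combined with the relevant scaling relations: since the sequences $a$ and $d$ are asymptotic inverses one has $a_{d_n}\sim n$, hence $a_{\lceil b d_n\rceil}^4/n^4\to b^{4/\beta}$ when $\beta<1$; and for $\beta=1$, by regular variation of $a$ and $c$, $a_{\lceil b c_n\rceil}^4/a_{c_n}^4\to b^{4}$ (see~\cite[Theorem~1.5.12]{bingham:1987:regular}). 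Factoring $V_{\lceil b\kappa_n\rceil}/s_n^4 = \bigl(V_{\lceil b\kappa_n\rceil}/a_{\lceil b\kappa_n\rceil}^4\bigr)\cdot\bigl(a_{\lceil b\kappa_n\rceil}^4/s_n^4\bigr)$, the first factor tends to $0$ in $\P$‑probability by Proposition~\ref{prop:4:right} and the second to the finite constant $b^{4/\beta}$, so $\P[V_{\lceil b\kappa_n\rceil}>\eta s_n^4]\to0$. Letting $n\to\infty$ and then $\delta\to0$ gives $s_n^{-4}V_{\nu_n-1}\overset{\P}{\to}0$ in both cases, which is the claim.

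The argument is essentially routine once Proposition~\ref{prop:4:right} is available; the three points needing attention are the monotonicity of $m\mapsto V_m$ (which is what permits replacing the random index $\nu_n-1$ by a deterministic upper bound), the tightness of $\nu_n/\kappa_n$ supplied by Lemma~\ref{lem:5:conv}, and — the genuinely case‑sensitive step — verifying the asymptotic equivalences $a_{d_n}\sim n$ for $\beta<1$ and $a_{bc_n}\sim b\,a_{c_n}$ for $\beta=1$, so that the normalisation of Proposition~\ref{prop:4:right} is transported to the normalisation in the statement. I expect this bookkeeping with regularly varying sequences to be the only real (and still minor) obstacle.
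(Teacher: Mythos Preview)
Your proposal is correct and follows essentially the same route as the paper: use tightness of $\nu_n/\kappa_n$ from Lemma~\ref{lem:5:conv}, bound the (monotone in $m$) variance $V_{\nu_n-1}$ by $V_{\lceil b\kappa_n\rceil}$, and then transport the normalisation via $a_{Cd_n}\sim C^{1/\beta}n$ (respectively $a_{bc_n}\sim b\,a_{c_n}$). If anything, your version is slightly cleaner in that you invoke Proposition~\ref{prop:4:right} directly as a black box, whereas the paper phrases it as ``one argues as in the proof of Proposition~\ref{prop:4:right}'' and re-runs the $I_n^1/I_n^2$ split; the content is the same.
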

	\begin{proof}
		One can use the same arguments as in the proof of Proposition~\ref{prop:4:right}. First consider $\beta\in (0,1)$. By tightness of $\{  \nu_n /d_n\}_{n \in \NN}$ we can choose $C>0$ to make the probability
		$\P[\nu_n > C d_n]$ arbitrarily small. Next, on the event $\{ \nu_n \leq C d_n \}$,
		\begin{equation*}
			\var_\omega \left[  T_{S_{\nu_n-1}}^r - \E_\omega T_{S_{\nu_n-1}}^r - \sum_{k=1}^{\nu_n-1} \xi_k^2 (2\vartheta_k-1) \right] \leq
			\var_\omega \left[ \sum_{k=1}^{Cd_n} \left( U_{\xi_k} - 2\xi_k^2 \vartheta_k \right) \right] .
		\end{equation*}
		From  here, since $a_{Cd_n} \sim C^{1/\beta}n$,  one argues as in the proof of Proposition~\ref{prop:4:right} to show that
 		\begin{equation*}
 			 \sum_{k\in { I^1_n}} \frac{\xi_k^4}{n^4} \var_\omega \left[  \frac{U_{\xi_k}}{\xi_k^2} - 2\vartheta_k \right]\overset{\P}{\to}0\qquad \mbox{and} \qquad  \sum_{{k\in I^2_n}} \frac{\xi_k^4}{n^4}
			\var_\omega \left[  \frac{U_{\xi_k}}{\xi_k^2} - 2\vartheta_k \right]  \overset{\P}{\to}0,
 		\end{equation*}
		where { $I^1_{n} = \{ k \leq Cd_n \: : \: \xi_k > \varepsilon n \}$, $I^2_{n} = \{ k \leq Cd_n \: : \: \xi_k \le \varepsilon n \}$} with fixed $\varepsilon>0$. In the case $\beta=1$ and $\E \xi =\infty$ one can invoke that same arguments combined with the tightness of
		$\{ \nu_n /c_n\}_{n \in \NN}$.
	\end{proof}

\begin{lem}\label{lem:4:nun}
	Assume that~\eqref{eq:2:assumption} holds true.
	If $\beta \in (0,1)$, then
	\begin{equation*}
		n^{-4} \var_\omega T_{S_{\nu_n}}^l \overset{\P}{\to} 0.
	\end{equation*}
	If $\beta=1$ and $\EE \xi =\infty$, then
	\begin{equation*}
		a_{c_n}^{-4} \var_\omega T_{S_{\nu_n}}^l \overset{\P}{\to} 0.
	\end{equation*}
\end{lem}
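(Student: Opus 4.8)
The plan is to reduce the random index $\nu_n$ to a deterministic one and then quote Lemma~\ref{lem:var}. First I would note that, since the family $\{\T_k^l\}_k$ is $\Po$-independent, one has $\var_\omega T_{S_m}^l=\sum_{k=1}^m\var_\omega\T_k^l$, so $m\mapsto\var_\omega T_{S_m}^l$ is non-decreasing in $m$. Consequently, for a constant $C>0$, on the event $\{\nu_n\le C d_n\}$ (when $\beta<1$; respectively $\{\nu_n\le C c_n\}$ when $\beta=1$) we get the pathwise domination $\var_\omega T_{S_{\nu_n}}^l\le\var_\omega T_{S_{\lfloor C d_n\rfloor}}^l$ (resp.\ $\le\var_\omega T_{S_{\lfloor C c_n\rfloor}}^l$).

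Next I would record the relevant scaling. Since $\P[\xi>t]\sim t^{-\beta}\ell(t)$, the sequence $d_n=1/\P[\xi>n]$ is regularly varying of index $\beta$ and is the asymptotic inverse of $\{a_n\}$, so that $a_{\lfloor C d_n\rfloor}\sim C^{1/\beta}n$; similarly $\{c_n\}$ is regularly varying of index $\beta=1$ and $a_{\lfloor C c_n\rfloor}\sim C\,a_{c_n}$. Applying the second part of Lemma~\ref{lem:var} (the case $\theta=0$) along the deterministic index $m=\lfloor C d_n\rfloor\to\infty$ then yields
$$
n^{-4}\var_\omega T_{S_{\lfloor C d_n\rfloor}}^l\sim C^{4/\beta}\,a_{\lfloor C d_n\rfloor}^{-4}\var_\omega T_{S_{\lfloor C d_n\rfloor}}^l\overset{\P}{\to}0,
$$
and analogously $a_{c_n}^{-4}\var_\omega T_{S_{\lfloor C c_n\rfloor}}^l\overset{\P}{\to}0$ in the case $\beta=1$.

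Combining these, for every $\delta>0$ and $C>0$,
$$
\P\big[n^{-4}\var_\omega T_{S_{\nu_n}}^l>\delta\big]\le\P[\nu_n>C d_n]+\P\big[n^{-4}\var_\omega T_{S_{\lfloor C d_n\rfloor}}^l>\delta\big].
$$
Taking $\limsup_{n\to\infty}$, the second summand vanishes by the previous paragraph, while the first is made as small as we please by choosing $C$ large, using the tightness of $\{\nu_n/d_n\}_n$ that is contained in the convergence $\nu_n/d_n\Rightarrow L^{\leftarrow}(1)$ of Lemma~\ref{lem:5:conv}. As $\delta>0$ is arbitrary this proves the first claim; the case $\beta=1$ is identical after replacing $d_n$ and $n^{-4}$ by $c_n$ and $a_{c_n}^{-4}$ and invoking $\nu_n/c_n\Rightarrow1$ from Lemma~\ref{lem:5:conv}. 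I do not expect a genuine obstacle here: the one thing to be careful about is the regular-variation bookkeeping, so that the deterministic estimate of Lemma~\ref{lem:var} is expressed in exactly the normalisation $n^{-4}$ (resp.\ $a_{c_n}^{-4}$) needed in the statement.
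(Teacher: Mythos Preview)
Your proposal is correct and mirrors the paper's proof almost verbatim: both split on $\{\nu_n\le Cd_n\}$ (resp.\ $\{\nu_n\le Cc_n\}$), use the monotonicity of $m\mapsto\var_\omega T_{S_m}^l$ to reduce to a deterministic index, invoke Lemma~\ref{lem:var} together with $a_{Cd_n}\sim C^{1/\beta}n$, and control $\P[\nu_n>Cd_n]$ via tightness of $\nu_n/d_n$. You are in fact slightly more explicit than the paper about the monotonicity step and the regular-variation bookkeeping, but the argument is the same.
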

\begin{proof}
 	Consider the case $\beta < 1$. Take any $C>0$ and write
	\begin{equation*}
		\P\left[ n^{-4} \var_\omega T_{S_{\nu_n}}^l \geq \varepsilon \right] \leq \P\left[\nu_n \geq Cd_n \right] + \P\left[ \var_\omega T_{S_{[Cd_n ]}}^l \geq \varepsilon n^4 \right].
\end{equation*}
	Since $a_{Cd_n} \sim C^{1/\beta}n$, an appeal to Lemma \ref{lem:var} shows that the second term tends to $0$ as $n \to \infty$. The first term can be made arbitrary small by taking $C>0$ sufficiently big.
	In the case $\beta = 1$ we can use an analogous argument with $d_n$ replaced with $c_n$.
\end{proof}

	 For the purpose of the next lemma. let $(\{ U^{0}_n\}_{n \in \NN}, \vartheta_0)$ be, as before, a copy of $(\{ U_n\}_{n \in \NN},\vartheta)$ given by the claim of Lemma~\ref{lem:m3} independent of the environment.

	\begin{lem}\label{lem:5:u0}
		Assume that~\eqref{eq:2:assumption} and \eqref{eq:2:assumption2} hold true for $\beta \leq 1$ and $\E \xi =\infty$. Then
		$$
			\frac{U^0_{n-S_{\nu_n -1}} - \Eo U^0_{n-S_{\nu_n -1}}}{ n^2} - (1- \Xi)^2 (2\vartheta_0-1)  \overset{\P}{\to} 0,
		$$
		where $\Xi = \Upsilon(L)$ for $\beta < 1$ and $\Xi = 1$ for $\beta = 1$.
	\end{lem}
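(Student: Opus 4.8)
The plan is to reduce the statement to a random–index limit theorem for $U^{0}$ and then prove that by conditioning on the environment. Since $\{U^{0}_n\}_{n\in\NN}$ together with $\vartheta_0$ is independent of the environment (and, in particular, of $(\xi_k)_{k\in\NN}$), the quenched law of $(U^0,\vartheta_0)$ coincides with its law; in particular $\Eo U^0_m=\E U^0_m=m^2$ by Lemma~\ref{lem:m3}. Abbreviating $K_n=n-S_{\nu_n-1}$, so that $0\le K_n\le n$ because $S_{\nu_n-1}\le n<S_{\nu_n}$, one has the identity
\begin{equation*}
	\frac{U^0_{K_n}-\Eo U^0_{K_n}}{n^2}-(1-\Xi)^2(2\vartheta_0-1)
	=\left[\frac{U^0_{K_n}}{n^2}-\Big(\frac{K_n}{n}\Big)^2 2\vartheta_0\right]
	+\left[\Big(\frac{K_n}{n}\Big)^2-(1-\Xi)^2\right](2\vartheta_0-1).
\end{equation*}
By Lemma~\ref{lem:5:conv}, which under the almost sure coupling of Remark~\ref{rmk:as} gives $S_{\nu_n-1}/n\to\Xi$ a.s., we get $(K_n/n)^2\to(1-\Xi)^2$ a.s., so the second bracket tends to $0$ a.s. (recall $\vartheta_0<\infty$ a.s.). Hence it suffices to prove
\begin{equation*}
	\frac{U^0_{K_n}}{n^2}-\Big(\frac{K_n}{n}\Big)^2 2\vartheta_0\;\overset{\P}{\to}\;0 .
\end{equation*}

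I would first dispose of the case $\beta=1$, where $\Xi=1$ and hence $K_n/n\overset{\P}{\to}0$. Using independence of $U^0$ from $K_n$ and $\Eo U^0_{K_n}=K_n^2$ we have $\E[U^0_{K_n}/n^2]=\E[K_n^2/n^2]$, which tends to $0$ by bounded convergence since $0\le K_n^2/n^2\le 1$; as $U^0_{K_n}\ge 0$ this forces $U^0_{K_n}/n^2\overset{\P}{\to}0$, while $(K_n/n)^2\,2\vartheta_0\overset{\P}{\to}0$, so the displayed quantity vanishes and the case is done.

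For $\beta<1$ we have $\Xi=\Upsilon(L)$, and since a $\beta$-stable subordinator with $\beta<1$ a.s.\ jumps strictly across the level $1$ (single points being polar), $\Upsilon(L)<1$ a.s.; consequently $K_n/n\to1-\Xi>0$ a.s., so $K_n\to\infty$ a.s. On the event $\{K_n\ge 1\}$, whose probability tends to $1$,
\begin{equation*}
	\frac{U^0_{K_n}}{n^2}-\Big(\frac{K_n}{n}\Big)^2 2\vartheta_0=\Big(\frac{K_n}{n}\Big)^2\left(\frac{U^0_{K_n}}{K_n^2}-2\vartheta_0\right),
\end{equation*}
and since $(K_n/n)^2\le 1$ it remains to prove the random–index convergence $U^0_{K_n}/K_n^2-2\vartheta_0\overset{\P}{\to}0$. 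Here I would use that $(U^0,\vartheta_0)$ is independent of $K_n$: writing $g(m)=\E[(U^0_m/m^2-2\vartheta_0)^2]$, for any $M\ge 1$,
\begin{equation*}
	\E\left[\Big(\frac{U^0_{K_n}}{K_n^2}-2\vartheta_0\Big)^2\1_{\{K_n\ge M\}}\right]=\sum_{m\ge M}\P[K_n=m]\,g(m)\le\sup_{m\ge M}g(m),
\end{equation*}
and $\sup_{m\ge M}g(m)\to0$ as $M\to\infty$ because $g(m)\to0$ by the $L^2$-convergence $U^0_m/m^2\to2\vartheta_0$ built into the coupling of Section~\ref{subsec:coupling}. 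Combining this with $\P[K_n<M]\to0$ for each fixed $M$ (a consequence of $K_n\to\infty$ a.s.) and Chebyshev's inequality yields, for every $\eps>0$,
\begin{equation*}
	\limsup_{n\to\infty}\P\left[\Big|\frac{U^0_{K_n}}{K_n^2}-2\vartheta_0\Big|>\eps\right]\le\eps^{-2}\sup_{m\ge M}g(m),
\end{equation*}
and letting $M\to\infty$ gives the claim.

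The main obstacle is precisely this last random–index limit: one cannot simply invoke the distributional convergence in Lemma~\ref{lem:m3}, since $2\vartheta_0$ must be the specific coupled random variable that will enter the limit measure $F(L)$. Instead one must exploit the stronger $L^2$-convergence supplied by the coupling together with the independence of $(U^0,\vartheta_0)$ from the environment, and separate the regime $\beta=1$ (where $K_n=o(n)$ in probability but need not tend to infinity) from the regime $\beta<1$ (where $\Upsilon(L)<1$ a.s.\ forces $K_n\to\infty$).
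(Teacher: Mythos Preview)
Your proof is correct and rests on the same algebraic decomposition as the paper's: both rewrite the difference as a sum of a piece controlled by $K_n/n\to 1-\Xi$ and a piece involving $U^{0}_{K_n}/K_n^{2}-2\vartheta_0$. The paper's argument is terser and handles both cases at once by invoking the key renewal theorem (Durrett, Theorem~2.6.12) to get $n-S_{\nu_n-1}\overset{\P}{\to}\infty$ directly from $\E\xi=\infty$, then appeals to the coupled $L^2$-convergence. You instead split the cases: for $\beta=1$ you bypass the need for $K_n\to\infty$ via the $L^1$ bound $\E[U^0_{K_n}/n^2]=\E[K_n^2/n^2]\to 0$, and for $\beta<1$ you replace the renewal argument by the structural fact that a stable subordinator a.s.\ overjumps level~$1$, so $\Upsilon(L)<1$ and $K_n\to\infty$ a.s. Your random-index step (conditioning on $K_n$ and bounding by $\sup_{m\ge M}g(m)$) is more explicit than the paper's bare reference to ``Lemma~\ref{lem:m3}'', which strictly speaking gives only weak convergence and tacitly relies on the $L^2$-coupling of Section~\ref{subsec:coupling} just as you do. One small remark: your parenthetical that for $\beta=1$ the overshoot ``need not tend to infinity'' is not quite right---it does, by the key renewal theorem the paper cites---but your argument does not use this, so nothing is affected.
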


	\begin{proof}
		 By the merit of  Remark~\ref{rmk:as}, $S_{\nu_n - 1}/n \to \Xi$,  $\P$-almost surely. Secondly, by a standard application of the key renewal theorem \cite[Theorem 2.6.12]{durrett2019probability}, the condition $\EE \xi=\infty$ implies that
	$n-S_{\nu_n - 1} \overset{\P}{\to} \infty$. The claim of the lemma follows from the fact that
		\begin{multline*}
			\frac{U^0_{n-S_{\nu_n -1}} - \Eo U^0_{n-S_{\nu_n -1}}}{ n^2} - (1-\Xi)^2 (2\vartheta_0-1)=\\
		 -\bigg( 1 - \frac{S_{\nu_n -1}}{n}\bigg)^2  + (1- \Xi)^2
				+  \bigg( 1 - \frac{S_{\nu_n -1}}{n}\bigg)^2  \frac{U^0_{n-S_{\nu_n -1}}}{(n-S_{\nu_n -1})^2} - (1- \Xi)^22\vartheta_0
		\end{multline*}
	and Lemma \ref{lem:m3}.
	\end{proof}

\subsection{Strong sparsity: $\beta=1$}

	We will now focus on the case when $\beta=1$ and $\E \xi =\infty$.
By Lemmas~\ref{lem:3:reduction},~\ref{lem:5:var1}, \ref{lem:4:nun} and~\ref{lem:5:u0}, it is sufficient to study the quenched behaviour of $\sum_{k=1}^{\nu_n-1} \xi_k^2 (2\vartheta_k-1)$.

	\begin{proof}[Proof of Theorem~\ref{thm:2:m2}]
		Fix $\varepsilon>0$. On the set $\{ |\nu_n - c_n|\leq \varepsilon c_n \}$,
		\begin{equation*}
			\E_\omega \bigg( \sum_{k=c_n+1}^{\nu_n} \xi_k^2(2 \vartheta_k-1) \bigg)^2 =\sum_{k=c_n+1}^{\nu_n}  \xi_k^4 \EE(2\vartheta-1)^2 \leq^{st} C \sum_{k=1}^{\varepsilon c_n} \xi_k^4.
		\end{equation*}
		Observe that
		\begin{equation*}
			 a_{c_n}^{-4}\sum_{k=1}^{\varepsilon c_n} \xi_k^4 =  \varepsilon^{4}(1+o(1))\sum_{k=1}^{\varepsilon c_n} \xi_k^4/a_{\varepsilon c_n}^4.
		\end{equation*}
		Since the sequence on the right hand side is tight in $n$, it follows that
		\begin{equation*}
			a_{c_n}^{-4}\E_\omega \bigg( \sum_{k=c_n+1}^{\nu_n} \xi_k^2(2 \vartheta_k-1) \bigg)^2 \overset{\P}{\to} 0.
		\end{equation*}
		In a similar fashion,
		\begin{equation*}
			a_{c_n}^{-4}\E_\omega \bigg( \sum_{k=\nu_n+1}^{c_n} \xi_k^2(2 \vartheta_k-1) \bigg)^2 \overset{\P}{\to} 0.
		\end{equation*}
		Therefore the weak limit of the quenched law of $(T_n - \E_\omega T_n)/a_{c_n}^2$ will coincide with the limit of
		\begin{equation*}
			\P_\omega \left[ \sum_{k=1}^{c_n} \xi_k^2(2\vartheta_k-1) /a_{c_n}^2 \in \cdot \right].
		\end{equation*}
		The weak limit of the latter is $G(N)$, which follows from the proof of Theorem~\ref{thm:2:m1}.
\end{proof}

\subsection{Strong sparsity: $\beta<1$.}

\begin{proof}[Proof of Theorem~\ref{thm:2:m3}]
Let $\mu_{n,\omega}$ denote the quenched law of $(T_n-\E_\omega T_n)/n^2$. Then
\begin{equation*}
	\mu_{n,\omega} (\cdot) = \P_\omega \left[ \frac{T_{n} - T_{S_{\nu_n-1}} - \E_\omega[T_{n} - T_{S_{\nu_n-1}}]}{ n^2}  +\frac{ T_{S_{\nu_n-1}} -\E_\omega[T_{S_{\nu_{n}-1}}]}{n^2}\in \cdot\right]
\end{equation*}
To treat the second term under the probability we can, similarly as previously, decouple the times that the random walker spends between consecutive $S_k$'s for $k \leq n$. The first part will be
controlled with the help of Lemma~\ref{lem:5:u0}.
Let $(U^{0}_n, \vartheta_0)$ be, as before, a copy of $(U_n,\vartheta)$ given by the claim of Lemma~\ref{lem:m3} independent of the environment. Then $U_{n-S_{\nu_n -1}}$ has, under $\Po$, the same distribution as the time the
walk spends in $[S_{\nu_n-1},n)$ after reaching $S_{\nu_n-1}$ and before reaching $n$. By Lemma \ref{lem:4:nun} and Lemma \ref{lem:3:reduction} the weak limit of $\mu_{n,\omega}$ is the same as that of 
\begin{equation*}
\bar \mu_{n,\omega} (\cdot) = \P_\omega \left[ \frac{U^0_{n-S_{\nu_n -1}} - \Eo U^0_{n-S_{\nu_n -1}}}{ n^2}  + \frac{ T^r_{S_{\nu_n-1}} -\E_\omega[T^r_{S_{\nu_{n}-1}}]}{ n^2}\in \cdot\right].
\end{equation*}

	Recall the random functions $L_n$ given in~\eqref{eq:5:sub} and that for a c\`adl\`ag function $h$ we denote by $\{ x_k(h), t_k(h)\}$ an arbitrary enumeration of its discontinuities, i.e. $x_k(h) = h(t_k)-h(t_k^-)>0$, where $t_k(h)=t_k$.
	Note that, with $\Upsilon$ given in~\eqref{eq:2:upsilon}, one has
	by the merit of Lemmas~\ref{lem:3:reduction},~\ref{lem:5:var1}, \ref{lem:4:nun} and~\ref{lem:5:u0} that the limit of $\bar \mu_{n,\omega}$
	will coincide with the limit of
	\begin{equation*}
		 F^n(\cdot )=\P_\omega \left[ \frac{a_{d_n}^2}{n^2}(1-\Upsilon(L_{d_n}))^2 (2\vartheta_0-1) + \frac{a_{d_n}^2}{n^2}\sum_{k} x_k(L_{d_n})^2(2\vartheta_k-1){\bf 1}_{\{ L_{d_n}(t_k) <n/a_{d_n} \}}  \in \cdot  \right]
	\end{equation*}
	It is enough to show that $F^n \Rightarrow  F(L)$.
	To achieve that one uses the same approach as in the proof of Theorem~\ref{thm:4:Sn}. Namely by considering for $\varepsilon>0$
	\begin{multline*}
		F_\varepsilon^n (\cdot)= \P_\omega \bigg[  \frac{a_{d_n}^2}{n^2} (1-\Upsilon(L_{d_n}))^2 (2\vartheta_0-1) \\ +  \frac{a_{d_n}^2}{n^2}\sum_{k} x_k(L_{d_n})^2(2\vartheta_k-1) {\bf 1}_{\{ x_k(L_{d_n})>\varepsilon  \}} {\bf 1}_{\{ L_{d_n}(t_k) <n/a_{d_n} \}} \in \cdot  \bigg].
	\end{multline*}
For fixed $\varepsilon >0$,  $F_\varepsilon^n \to F_\varepsilon^\infty$, where
	\begin{equation*}
			F_\varepsilon^\infty(\cdot) = \P_\omega \left[ (1-\Upsilon(L))^2 (2\vartheta_0-1) + \sum_{k} x_k(L)^2(2\vartheta_k-1) {\bf 1}_{\{ x_k(L)>\varepsilon  \}}
			{\bf 1}_{\{ L_{}(t_k) \leq 1 \}} \in \cdot  \right]
	\end{equation*}	
	since associated point processes converge and  ${a_{d_n}}/{n} \to 1$. Then we show that $F_\varepsilon^\infty \Rightarrow F(L)$ as $\varepsilon \to 0$. We finally prove that~\eqref{eq:4:cond3} also holds in this context and conclude the result.
\end{proof}

\vspace{5mm}

\noindent {\bf Acknowledgement}.
DB nad PD were supported by the National Science Center, Poland (Opus, grant number 2020/39/B/ST1/00209).
AK was supported by the National Science Center, Poland (Opus, grant number 2019/33/B/ST1/00207). 

\bibliographystyle{amsplain}
\bibliography{WQLTforRWSRE}

\vspace{1cm}

\footnotesize

\textsc{Dariusz Buraczewski, Piotr Dyszewski and Alicja Kolodziejska}: Mathematical
Institute, University of Wroclaw, 50-384 Wroclaw, Poland\\
\textit{E-mail}: \texttt{dariusz.buraczewski@math.uni.wroc.pl}, \texttt{piotr.dyszewski@math.uni.wroc.pl}, \\ \texttt{alicja.kolodziejska@math.uni.wroc.pl}

\end{document}